\newtheorem{theorem}{Theorem}
\newtheorem{claim}[theorem]{Claim}
\newtheorem{assumption}[theorem]{Assumption}
\newtheorem{corollary}[theorem]{Corollary}
\newtheorem{lemma}[theorem]{Lemma}
\newtheorem{proposition}[theorem]{Proposition}
\newtheorem{remark}[theorem]{Remark}
\newenvironment{proof}[1][Proof]{\noindent\textbf{#1.} }{\ \rule{0.5em}{0.5em}}
\newcommand{\R}{\mathbb{R}}
\newcommand{\EE}{\mathbb{E}}
\newcommand{\M}{\mathcal{M}}
\renewcommand{\P}{\mathcal{P}}
\newcommand{\C}{\mathcal{C}}
\renewcommand{\d}{{\rm d}}
\begin{document}

\title{Self-similar behaviour of a non-local diffusion equation with time delay}
\author{\bigskip Arnaud Ducrot$^{a}$ and Alexandre Genadot$^b$\\
$^a${\small email: arnaud.ducrot@univ-lehavre.fr}\\
{\small Normandie Univ, UNIHAVRE, LMAH, FR-CNRS-3335, ISCN, 76600 Le Havre, France}\\
$^b${\small email: alexandre.genadot@math.u-bordeaux.fr}\\
{\small Univ. Bordeaux, IMB, UMR 5251, F-33076 Bordeaux, France}\\
{\small CNRS, IMB, UMR 5251, F-33400 Talence, France}\\
{\small INRIA Bordeaux-Sud Ouest, Team CQFD, F-33400 Talence, France}}
\maketitle

\begin{abstract}
We study the asymptotic behaviour of solutions of a class of linear non-local measure-valued differential equations with time delay. Our main result states that the solutions asymptotically exhibit a parabolic like behaviour in the large times, that is precisely expressed in term of heat kernel. Our proof relies on the study of a -- self-similar -- rescaled family of solutions. We first identify the asymptotic behaviour of the solutions by deriving a convergence result in the sense of the Young measures. Then we strengthen this convergence by deriving suitable fractional Sobolev compactness estimates.
As a by-product, our main result allows to obtain asymptotic results for a class of piecewise constant stochastic processes with memory.
\end{abstract}

\noindent{\bf Key words:} Non-local diffusion; Time delay; Self-similar behaviour; Fractional Sobolev estimates; Central Limit Theorem.

\section{Introduction}

Mathematical models with non-local diffusion arise in various applicative fields including physics and biology. Here, by non-local diffusion we mean convolution equation of the form
$$
\frac{\partial u(t,x)}{\partial t}=\int_{\R^N} J(x-y)\left[u(t,y)-u(t,x)\right]\d y,\;t>0\text{ and }x\in\R^N,
$$
wherein $J$ denotes a probability kernel on $\R^N$, for some given and fixed integer $N\geq 1$.
As described by Fife in \cite{Fife1}, if $u=u(t, x)$ denotes the density of a population at time $t$ and spatial location $x\in\R^N$, the first term in the right hand side of this equation describes the rate at which individuals jump to $x$ from all other locations $y$ weighted with the probability kernel $J(x-y)$, while the second term $u(t,x)=u(t,x)\int_{\R^N}J(\d y)$, corresponds to the rate at which individuals are leaving the location $x$ to move to some other places. 

As mentioned above, such a motion equation arises in various applications. We refer to \cite{Bates, 6, 7, 8, Fife1, Fife2, Jin-Zhao, Lutscher} and the references therein for analysis of models coming from physics, mathematical biology and population dynamics. We would like to emphasize that in such models, the jumps of particles or individuals are supposed to be instantaneous. 

We can extend the above non-local diffusion equation by taking into account the travel time to jump from $y$ to $x$. Typically, the travel time depends on the distance to travel ($x-y$) so that the above non-local motion law can be extended as follows. For $t>0$ and $x\in\R^N$, a typical non-local diffusion equation with time delay reads as
\begin{equation}\label{typical}
\frac{\partial u(t,x)}{\partial t}=\int_{\R^N} J(x-y)\left[u(t-\tau(x-y),y)-u(t,x)\right]\d y,
\end{equation}
wherein $\tau(x-y)$ denotes the time needed to jump from $y$ to $x$. This work is concerned with the asymptotic analysis of such a non-local diffusion equation with time delay. To that aim, we will consider more general version of this problem such as the equation
\begin{equation}\label{eq-delay}
\frac{\partial u (t,x)}{\partial t}=\int_{[-1,0]\times \R^N}\left[u(t+\theta,x-y)-u(t,x)\right]Q(\d \theta,\d y),\;t>0,
\end{equation}
supplemented with a suitable initial data $u(\theta,x)$ for $\theta\in [-1,0]$ and $x\in\R^N$.
In the above equation, $Q=Q(\d\theta,\d y)$ denotes a probability distribution on the infinite strip $S:=[-1,0]\times \R^N$. 
This distribution captures the information about both the jump process and the time needed to perform jumps.
For instance, if we choose $Q(\d\theta,\d y)=\delta_{-\tau(y)}(\d\theta)\otimes J(y)\d y$, we recover the typical equation presented in \eqref{typical} above.
Below we will describe a first stochastic representation of such a problem.

Observe that when $Q(\d\theta,\d y)=\delta_0(\d \theta)\otimes J(\d y)$, Equation \eqref{eq-delay} becomes
\begin{equation*}
\frac{\partial u(t,x)}{\partial t}=\int_{\R^N}\left[u(t,x-y)-u(t,x)\right]J(\d y).
\end{equation*}
This equation corresponds to the usual random walk equation with instantaneous jumps and associated with the jump measure $J$.
Under suitable assumptions on this jump measure $J$, namely the existence of second moments and the non degeneracy of the covariance matrix, the positive solutions of this equation satisfy a central limit theorem. We refer to Ignat-Rossi \cite{Ignat-Rossi} for a detailed study of this equation using Fourier analysis. We also refer to Chasseigne et al in \cite{5} for a refined study in the case where the Fourier transform of the kernel $J$ has lower regularity close to zero and the connexion with the self-similar solutions of the heat equation with fractional Laplace operator. More general stochastic representations of the solution to equation \eqref{eq-delay} are considered in Section \ref{sec:proba}. The corresponding stochastic processes are piecewise constant processes with memory. The main result of the present paper allows to study the asymptotic behaviour of such stochastic processes.

Indeed, the aim of this work is to study the large time behaviour for the delayed equation \eqref{eq-delay}.
For the sake of generality and also for future applicative uses, we shall consider a slightly more general version of such an equation.
In order to present the equation we will consider throughout this work, we denote by $\mathcal P(\R^N)$ the set of Borel probability measure on $\R^N$. In the sequel, we will simply write $\mathcal P$ instead of $\mathcal P(\R^N)$ when there is no possible confusion.  This space is endowed with the usual metrizable narrow topology (see Appendix A). \\
Let $\alpha:[0,\infty)\times [-1,0]\to (0,\infty)$ be a given bounded and continuous function.
We consider the following problem:
\begin{equation}\label{eq-retard}
\begin{cases}
\displaystyle \dfrac{\partial}{\partial t} u(t,\d x)=\int_{S}\alpha(t,\theta)\left[u(t+\theta,\d x-y)-u(t,\d x)\right]Q(\d \theta,\d y),\;t>0,\\
\displaystyle u(\theta,\d x)=u^0(\theta,\d x),\;\forall \theta\in [-1,0]\text{ and }u^0\in \C\left([-1,0];\mathcal P(\R^N)\right). 
\end{cases}
\end{equation} 
Similarly to \eqref{eq-delay}, in the above problem $Q$ belongs to $\mathcal P(S)$, the set of Borel probability measures on the strip $S=[-1,0]\times \R^N$. 

Remark that the above equation is posed for probability measures allowing us to handle lattice equations with time delay. Indeed, consider for instance the case where
\begin{equation*}
Q(\d\theta,\d y)=\sum_{j\in \mathbb Z}\alpha_j\delta_{-\tau_j}(\d\theta)\otimes \delta_j(\d y)\text{ with }-\tau_j\in [-1,0],\;\alpha_j\geq 0\text{ and }\sum_{j\in\mathbb Z}\alpha_j=1,
\end{equation*}
then, when equipped with initial data of the form
$$
u(\theta,\d x)=\sum_{i\in\mathbb Z} u_i^0(\theta)\delta_i(\d x)\in \C\left([-1,0];\mathcal P(\R^N)\right),
$$
the solutions of the above equation takes the form $u(t,\d x)=\sum_{i\in\mathbb Z} u_i(t)\delta_i(\d x)$ where the functions $\left(u_i(t)\right)_{i\in\mathbb Z}$ satisfy the delayed lattice system of equations
\begin{equation*}
\begin{cases}
\displaystyle u_i'(t)=\sum_{j\in \mathbb Z} \alpha_{j} u_{i-j}\left(t-\tau_{j}\right)-u_i(t),\;t>0,\\
u_i(\theta)=u_i^0(\theta),\;\theta\in [-1,0]
\end{cases},\;\forall i\in\mathbb Z.
\end{equation*}

We now come back to Problem \eqref{eq-retard}. Before stating our main result, we describe the main set of assumptions that will be used to study the asymptotic behaviour of \eqref{eq-retard}.
\begin{assumption}\label{ASS1}
We assume that:
\begin{itemize}
\item[(i)] The function $\alpha=\alpha(t,\theta)$ defined from $[0,\infty)\times [-1,0]$ into $(0,\infty)$ is bounded and continuous and there exist $M>0$ and $\beta>0$ such that 
\begin{equation*}
|\alpha(t,\theta)-\alpha_\infty(\theta)|\leq Me^{-\beta t},\;\forall t\geq 0,\;\theta\in [-1,0],
\end{equation*}
wherein $\alpha_\infty=\alpha_\infty(\theta)>0$ is a continuous function on $[-1,0]$.
\item[(ii)] The function $(\theta,x)\mapsto |x|$ belongs to $L^2\left(S; Q(\d \theta,\d x)\right)$.
Here we use the symbol $|\cdot|$ to denote the Euclidean norm in $\R^N$.
\end{itemize}
\end{assumption}

Throughout this work, we fix $u^0\in \C\left([-1,0];\mathcal P(\R^N)\right)$ and we consider the solution $u\equiv u(t,\d x)$ of \eqref{eq-retard} associated with the initial data $u^0$. Here, by a solution we mean an element $\mu=\mu(t,\d x)\in \C\left([-1,\infty);\mathcal P(\R^N)\right)$ that satisfies:
\begin{itemize}
\item [(i)] for all $\theta\in [-1,0]$, one has $\mu(\theta,\d x)=u^0(\theta,\d x)$;

\item[(ii)] for any $f\in \mathcal C_b(\R^N)$, the space of bounded and continuous functions on $\R^N$, one has:
\begin{itemize}
\item [(iia)] the map $t\mapsto \int_{\R^N} f(x) \mu(t,\d x)$ is continuously differentiable for $t\in (0,\infty)$;
\item[(iib)] for all $t>0$ one has
\begin{equation*}
\begin{split}
\frac{\d }{\d t}\int_{\R^N} f(x) \mu(t,\d x)=&\int_{S}\alpha(t,\theta)\left[\int_{\R^N}\hspace{-0.1cm}f(x+z)\mu(t+\theta,\d x)\right]Q(\d \theta,\d z)\\
&-\int_{\R^N}f(x)\mu(t,\d x)\int_{S}\alpha(t,\theta)Q(\d \theta,\d z).
\end{split}
\end{equation*}
\end{itemize}
\end{itemize}
Note that the existence and uniqueness of such a solution simply follows from a usual contraction fixed point argument in the metric space $\C\left([-1,T];\mathcal P(\R^N)\right)$ for $T>0$.

Our main result reads as follows.

\begin{theorem}[Self-similar behaviour]\label{THEO2}
Let Assumption \ref{ASS1} be satisfied.
Then there exists a vector ${\bf K}\in\R^N$ and a probability measure $\pi\in\mathcal P(\R^N)$ with zero mean
value such that the function $t\mapsto \mu(t,\d x)\in \C\left([-1,\infty);\mathcal P(\R^N)\right)$ satisfies the following asymptotic behaviour:
for all test functions $f\in\mathcal C_b\left(\R^N\right)$ one has
\begin{equation*}
\lim_{t\to\infty}\int_{\R^N}f\left(\frac{x-{\bf K}t}{\sqrt t}\right)\mu(t,\d x)=\int_{\R^N}f(x)\pi(\d x).
\end{equation*}
The vector ${\bf  K}$ is given by
\begin{equation}\label{9}
{\bf  K}=\frac{1}{1+\Gamma}\int_S \alpha_\infty (\theta) z Q(\d\theta,\d z)\text{ with }\Gamma:=\int_S (-\theta)\alpha_\infty(\theta) Q(\d\theta,\d z),
\end{equation}
while $\pi(\d x)=\Pi(1,\d x)$ where $\Pi(t,\d x)\in \C(\R^+,\mathcal P)$ denotes the unique (tempered distribution) solution of the heat equation
\begin{equation}\label{10}
\partial_t \Pi=\frac12{\rm div}\,\left(\frac{D_0}{1+\Gamma}\nabla \Pi\right)+\delta_{t=0}\otimes \delta_0(\d x).
\end{equation}
Herein $D_0$ denotes the non-negative symmetric matrix defined by
\begin{equation}\label{11}
D_0=\int_S \alpha_\infty(\theta)\left[z+\theta\overline{\bf K}\right]\left[z+\theta\overline{\bf K}\right]^T Q(\d\theta,\d z).
\end{equation} 
\end{theorem}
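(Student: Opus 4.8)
The plan is to reduce the statement to a narrow-convergence assertion for a parabolically rescaled family of measures, and then to identify the limit as the heat kernel $\Pi(1,\cdot)$ by passing to the limit in the rescaled equation. For $\lambda>0$ introduce the rescaled measures $u_\lambda(t,\d x)$, defined as the push-forward of $\mu(\lambda t,\cdot)$ under the map $x\mapsto (x-\mathbf{K}\lambda t)/\sqrt\lambda$, so that for $g\in\mathcal{C}_b(\R^N)$ one has $\int_{\R^N} g(y)\,u_\lambda(t,\d y)=\int_{\R^N} g\big((x-\mathbf{K}\lambda t)/\sqrt\lambda\big)\,\mu(\lambda t,\d x)$. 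Choosing $t=1$ and $\lambda=t$ recovers exactly the quantity in the statement, so it suffices to prove that $u_\lambda(1,\cdot)$ converges narrowly to $\pi=\Pi(1,\cdot)$ as $\lambda\to\infty$; in fact I would prove the stronger fact that $t\mapsto u_\lambda(t,\cdot)$ converges to $t\mapsto\Pi(t,\cdot)$.

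First I would pin down the drift $\mathbf{K}$ through the first moment. Testing \eqref{eq-retard} against the (truncated, then unbounded) function $f(x)=x$, which is licit thanks to Assumption \ref{ASS1}(ii), gives for $m(t)=\int_{\R^N} x\,\mu(t,\d x)$ the delay relation
\[
m'(t)=\int_S\alpha(t,\theta)\big[m(t+\theta)-m(t)\big]Q(\d\theta,\d z)+\int_S\alpha(t,\theta)\,z\,Q(\d\theta,\d z).
\]
Using the exponential convergence $\alpha\to\alpha_\infty$ from Assumption \ref{ASS1}(i), a linear ansatz $m(t)\sim\mathbf{K}t$ forces $\mathbf{K}(1+\Gamma)=\int_S\alpha_\infty(\theta)z\,Q(\d\theta,\d z)$, which is \eqref{9}; a Gronwall/renewal argument then upgrades this to $m(t)=\mathbf{K}t+o(\sqrt t)$, consistent with the required zero mean of $\pi$.

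Next I would derive the equation satisfied by $u_\lambda$ and carry out the parabolic expansion. Differentiating $\int g\,\d u_\lambda(t)$ in $t$ and inserting \eqref{eq-retard} at time $\lambda t$ produces, for smooth compactly supported $g$,
\[
\frac{\d}{\d t}\int g\,\d u_\lambda(t)=-\sqrt\lambda\,\mathbf{K}\cdot\!\int\nabla g\,\d u_\lambda(t)+\lambda\!\int_S\alpha(\lambda t,\theta)\Big[\int g\Big(y+\tfrac{\theta\mathbf{K}+z}{\sqrt\lambda}\Big)u_\lambda\big(t+\tfrac{\theta}{\lambda},\d y\big)-\int g\,\d u_\lambda(t)\Big]Q(\d\theta,\d z).
\]
A Taylor expansion of $g$ to second order, together with the identity $\int_S\alpha_\infty(\theta)(\theta\mathbf{K}+z)\,Q(\d\theta,\d z)=\mathbf{K}$, makes the $O(\sqrt\lambda)$ contributions cancel exactly against the drift term; the delay $t+\theta/\lambda\to t$ then collapses and produces the scalar factor $1+\Gamma$ in front of the time derivative, while the quadratic term yields $\tfrac12\,\mathrm{tr}\big(D_0\nabla^2 g\big)$ with $D_0$ the matrix \eqref{11} (so $\overline{\bf K}=\mathbf K$). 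Dividing by $1+\Gamma$ leaves precisely the weak formulation of the heat equation \eqref{10}, and the exponential bound on $\alpha-\alpha_\infty$ ensures the non-autonomous correction disappears for every fixed $t>0$.

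The remaining, and hardest, part is to make this limit rigorous, and this is where I would follow the Young-measure/compactness route advertised in the abstract. Using the uniform second-moment bound inherited from Assumption \ref{ASS1}(ii), the family $\{u_\lambda\}$ is tight and hence relatively compact as a family of Young measures on $(0,T)\times\R^N$; any cluster point solves \eqref{10} by the computation above, and uniqueness of the tempered-measure solution of the heat equation identifies it with $\Pi$, forcing convergence of the whole family. The subtle point is that Young-measure convergence is only a time-averaged statement, whereas the theorem is a pointwise-in-time assertion at $t=1$; to bridge this gap I would establish uniform (in $\lambda$) fractional Sobolev estimates in time for $t\mapsto\int g\,\d u_\lambda(t)$ — the non-local operator provides no spatial regularization, so time equicontinuity is the only available compactness — and combine them with the Young-measure limit through a compact-embedding argument to obtain genuine narrow convergence at each fixed time. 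Controlling the delay terms $u_\lambda(t+\theta/\lambda,\cdot)$ uniformly in $\lambda$ inside these estimates is, I expect, the main technical obstacle.
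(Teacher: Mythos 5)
Your outline coincides with the paper's own strategy: the same parabolic rescaling (your $u_\lambda$ at $\lambda=t$ is the paper's $p^{\sqrt t}$, up to the harmless replacement of the paper's shift $H(\lambda^2 t)=-{\bf K}\lambda^2 t+O(1)$ by the exact linear drift), the same identification of the limit as the solution of \eqref{10} through Young-measure compactness and uniqueness of the tempered-distribution solution, the same cancellation structure (the identity of Remark \ref{REM-identity}, the factor $1+\Gamma$ produced by the collapsing delay, the matrix $D_0$), and the same final plan of upgrading time-averaged convergence to convergence at the fixed time $t=1$ via fractional time regularity plus a compact embedding. One secondary remark: your tightness claim via uniform second moments of $u_\lambda$ is asserted, not proved (it would need a Gronwall-type argument on delayed moment equations); the paper avoids it entirely by first identifying the limit against $\C_0(\R^N)$ test functions and then invoking mass conservation of $\Pi$ together with the tightness Lemma \ref{LE7} to pass to $\C_b(\R^N)$.

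The genuine gap is exactly the point you flag and postpone. In the weak formulation the delay term carries the diverging prefactor (in the paper's scaling):
\begin{equation*}
\lambda^2\int_0^\infty\int_S \alpha(\lambda^2 t,\theta)\int_{\R^N}\phi(t,x)\left[p^\lambda(t,\d x)-p^\lambda\left(t+\tfrac{\theta}{\lambda^2},\d x\right)\right]Q(\d\theta,\d z)\,\d t,
\end{equation*}
and no direct bound on it is available: it is small only if $t\mapsto p^\lambda(t,\cdot)$ is already known to be equicontinuous in time at scale $\lambda^{-2}$, which is precisely what one is trying to prove, so the argument as you set it up is circular. The paper's key idea is not to estimate this term at all but to absorb it into the test function: the equation is rewritten as an identity for $\left\langle T_{1/\lambda^2}\partial_t\phi,\,p^\lambda\right\rangle$, where $T_h[\phi](t)=\phi(t)+\frac1h\int_S\alpha_\infty(\theta)\int_0^{-h\theta}\phi(t+l)\,\d l\,Q(\d\theta,\d z)$, and then $T_h$ is inverted uniformly in $h$: Lemma \ref{LE-repre} and Claim \ref{claim-L1} show that $T_h^{-1}=I+K_h\ast$ with $K_h(x)=h^{-1}K(x/h)$, where $K$ is a causal, exponentially decaying kernel constructed from the characteristic function $\Delta(s)=s-\int_S\alpha_\infty(\theta)(e^{s\theta}-1)Q(\d\theta,\d z)$ via a root-location argument (Lemma \ref{LE-carac}) and a Paley--Wiener-type theorem. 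Since $\|K_h\|_{L^1(\R)}$ is bounded uniformly in $h$, testing with $\phi_h=T_h^{-1}\psi$ converts the available bounds into a uniform estimate on $\langle\partial_t\psi,p^\lambda\rangle$, which yields the uniform $W^{1-s,p'}(0,T;H^{-\sigma}(\R^N))$ bounds of Theorem \ref{THEO-reg} and Lemma \ref{LE-sob}, and then Aubin--Lions--Simon compactness gives the convergence in $\C([\varepsilon,T];H^{-\sigma-1}(B_R))$ that finishes the proof. Without this device (or an equivalent way of handling the $\lambda^2$-weighted delayed differences), your plan stops exactly where the real difficulty begins.
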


If we denote by $k=\dim \ker D_0\geq 0$ and $\lambda_1>0$,..., $\lambda_{N-k}>0$ the nonzero eigenvalue of $\frac{1}{1+\Gamma}D_0$ then there exists an orthogonal matrix $P$ such that
\begin{equation*}
\frac{1}{1+\Gamma}D_0=P^T{\rm diag}\left(\lambda_1,\cdots, \lambda_{N-k},0,\cdots, 0\right)P.
\end{equation*}
Using this orthogonal basis one obtains an explicit formula for $\Pi(t,\d x)$ and thus the following explicit formula for $\pi=\Pi_1$:
\begin{equation*}
\pi\left({\rm d}y\right)=P_\sharp^T\left[\left[\bigotimes_{i=1}^{N-k}f_{\lambda_i}(y_i){\rm d}y_i\right]\otimes \left[\bigotimes_{i=N-k+1}^{N}\delta_0\left({\rm d}y_i\right)\right]\right].
\end{equation*}
Here, $\sharp$ denotes the usual push forward operator for Borel measures while for each
$\lambda>0$, the function $f_\lambda : \R \to \R$ denotes centred normal distribution with variance $\lambda$,
that is
\begin{equation*}
f_\lambda(y)=\frac{1}{\sqrt{2\pi\lambda}} \exp\left(-\frac{y^2}{2\lambda}\right).
\end{equation*}
In other words, the measure $\Pi_1$ is the multidimensional Gaussian law with variance-covariance matrix given by $\frac{1}{1+\Gamma}D_0$. We refer to \cite{SV} for a complete treatment of the relation between Gaussian processes and Fokker-Planck equations of the form \eqref{10}.

The proof of Theorem \ref{THEO2} will be divided in two parts. Our proof relies on the study of a – self-similar – rescaled family of solutions. In Section \ref{sec:proof}, we first identify the asymptotic behaviour of the solutions by deriving a convergence result in the sense of the Young measures. Then, in Section \ref{sec:proofTHEO2},  we strengthen this convergence by deriving suitable fractional Sobolev compactness estimates.

Before going to the proof of Theorem \ref{THEO2}, we apply it to some example of an hyperbolic equation with time delay.
More specifically consider the equation
\begin{equation}\label{hyperbolic}
\left[\frac{\partial }{\partial t}+q\frac{\partial}{\partial x}\right]u(t,x)=a\int_{[-1,0]} u(t+\theta,x)\eta(\d\theta)-b u(t,x), \;t>0,\;x\in\R,
\end{equation}
where $q\in\R\setminus\{0\}$, $a>0$ and $b\in\R$ are fixed parameters while $\eta\in \mathcal P([-1,0])$ is a given probability measure on the interval $[-1,0]$. This equation is supplemented with the initial data
$$
u(\theta,x)=u^0(\theta,x)\text{ with } u^0\in \C\left([-1,0];L_+^1(\R)\right) \text{ and }u^0(0,\cdot)\not\equiv 0.
$$
A special case of the above equation (with $\eta(\d\theta)=\delta_{-1}(\d\theta)$ and parameter conditions) has been studied by Laurent et al in \cite{15}. In this paper the authors derived conditions ensuring a parabolic behaviour for the solution of such a problem. Here we will show how our general result, namely Theorem \ref{THEO2}, may apply to \eqref{hyperbolic}, with general time delay dependence, to obtain similar results as in \cite{15}.\\
Set $v(t,x)=u(t,x+ qt)e^{a t}$ that satisfies the equation
\begin{equation*}
\frac{\partial v(t,x)}{\partial t}=a\int_{[-1,0]} e^{-b\theta} v(t+\theta,x- q\theta)\eta(\d \theta).
\end{equation*}
Set $y=y(t)=\int_{\R} v(t,x)\d x$ and observe that it satisfies the linear delay differential equation
\begin{equation}\label{DDeq}
\begin{split}
&y'(t)=a\int_{[-1,0]}e^{-b\theta}y(t+\theta)\eta(\d\theta),\;t>0,\\
&y(\theta)=y^0(\theta):=\int_{\R}u^0(\theta,x)\d x,\;\forall \theta\in [-1,0].
\end{split}
\end{equation} 
Since $u^0(0,\cdot)\not\equiv 0$ then $y^0\in \C\left([-1,0];\R^+\right)$ with $y^0(0)\neq 0$ and $y(t)>0$ for all $t>0$.
Hence, for $t>0$, the function $w(t,x)=\frac{v(t,x)}{y(t)}$ satisfies
\begin{equation*}
\begin{split}
\frac{\partial w(t,x)}{\partial t}=&a\int_{[-1,0]} \frac{y(t+\theta)}{y(t)}e^{-b\theta} \left[w(t+\theta,x- q\theta)-w(t,x)\right]\\
=&\int_S \alpha(t,\theta)\left[w(t+\theta,x-y)-w(t,x)\right]Q(d\theta,\d y),
\end{split}
\end{equation*}
with
\begin{equation*}
\alpha(t,\theta)=a e^{-b\theta}\frac{y(t+\theta)}{y(t)}\text{ and }Q(\d\theta,\d y)=\eta(\d\theta)\otimes \delta_{ q\theta}(\d y)\in \mathcal P\left([-1,0]\times \R\right).
\end{equation*}
As a consequence Problem \eqref{hyperbolic} re-writes as \eqref{eq-retard}. To conclude this section, it remains to check that Assumption \ref{ASS1} is satisfied for this specific example.
To that aim, one may first notice that Assumption \ref{ASS1} $(ii)$ is satisfied. And, Assumption \ref{ASS1} $(i)$ also holds true with the limit function $a_\infty=a_\infty(\theta)$ defined by
$
\alpha_\infty(\theta)=a e^{-b\theta}e^{\gamma\theta},
$ 
where $\gamma>0$ is the unique real solution of the equation
$$
\gamma=a\int_{[-1,0]}e^{(\gamma-b)\theta}\eta(\d\theta).
$$
This latter property will be checked in Appendix B.
This example will be further developed in the next section. 

\section{A probabilistic representation for some non-local diffusion equations with delay}\label{sec:proba}

In this section, we construct a $\R^N$-valued c\`adl\`ag process whose law satisfies the equation \eqref{eq-retard}. Let $(\Omega,\mathcal{A},\mathbb{P})$ be a probability space on which is built a nonhomogeneous Poisson point process $(N(t))_{t\in\mathbb{R}_+}$ with intensity function given for $t\in\mathbb{R}_+$ by 
$$
\lambda(t)=\int_S \alpha(t,\theta)Q(\d\theta,\d z)
$$
and corresponding intensity measure given by $\Lambda((0,t])=\int_0^t \lambda(s)\d s$. As usual, the sequence of times associated to the Poisson point process is defined by $T_0=0$ and, for $n\in\mathbb{N}$, by $T_n=\inf\{t\geq0 ; N(t)\geq n\}$. Notice that Assumption \ref{ASS1} implies that the intensity function stays positive and converges towards $\lambda_\infty=\int_S \alpha_\infty(\theta)Q(\d\theta,\d z)>0$ such that we do have that the $T_n$'s form a increasing sequence going to infinity, $\mathbb{P}$-a.s.

We also define, on the same probability space, a sequence $(\Theta_n,Z_n)_{n\in\mathbb{N}}$ of $S$-valued random variables such that conditional on $(T_n)_{n\in\mathbb{N}}$, the sequence $(\Theta_n,Z_n)_{n\in\mathbb{N}}$ is a sequence of independent random variables with law given, for $n\in\mathbb{N}$, by 
$$
q_{T_n}(\d\theta,\d z)=\frac{1}{\lambda(T_n)}\alpha(T_n,\theta)Q(\d\theta,\d z).
$$

Let also $(U(\theta))_{\theta\in[-1,0]}$ be a $\R^N$-valued c\`adl\`ag process such that for any $\theta\in[-1,0]$, $U(\theta)$ has for law $u^0(\theta,dx)$. We are now ready to define a $\R^N$-valued process $X$ with suitable law:
\begin{enumerate}
\item For $\theta\in[-1,0]$, we set $X(\theta)=U(\theta)$ such that $X(T_0)=U(0)$;
\item For $t\in[T_0,T_1)$, $X(t)=X(T_0)$ and then, at time $T_1$,
$$
X(T_1)=X(T_1^-+\Theta_1)+Z_1.
$$ 
\item And so on: for $n\in\mathbb{N}$, assume that the process $X$ is built up to the time $T_n$, then, for $t\in[T_n,T_{n+1})$, $X(t)=X_{T_n}$ and $X(T_{n+1})=X(T_{n+1}^-+\Theta_{n+1})+Z_ {n+1}$.
\end{enumerate}
By construction, the process $X$ is piecewise constant and c\`adl\`ag on $\mathbb{R}_+$.  In the following proposition, we verify that the law of $X$ satisfies the evolution equation \eqref{eq-retard}. In this setting, one obtains the following proposition.

\begin{proposition}
Let $f\in C_b(\R^N)$ be given. The function $t\mapsto \EE(f(X(t))$ is continuously differentiable on $(0,\infty)$ and we have, for any $t>0$,
\begin{equation}\label{eq:law}
\frac{\d}{\d t}\EE(f(X(t)))=\lambda(t)\int_{S}[\EE(f(X(t+\theta)+z))-\EE(f(X(t)))]q_t(\d \theta,\d z).
\end{equation}
\end{proposition}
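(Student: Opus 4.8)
The plan is to convert the pathwise jump structure of $X$ into an integral (renewal-type) identity for $g(t):=\mathbb{E}(f(X(t)))$ and then differentiate. Since $X$ is piecewise constant and c\`adl\`ag with jumps exactly at the Poisson times $T_n$, I would start from the telescoping identity
$$f(X(t))=f(X(0))+\sum_{T_n\le t}\big[f(X(T_n))-f(X(T_n^-))\big]=f(X(0))+\sum_{T_n\le t}\big[f(X(T_n^-+\Theta_n)+Z_n)-f(X(T_n^-))\big],$$
where I have inserted the jump rule from the construction. Because there are a.s.\ finitely many jumps on $[0,t]$ and every summand is bounded by $2\|f\|_\infty$, the sum is well defined and integrable.

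The key step is to take expectations and apply the compensation (integration) formula for marked point processes. The marked measure $\sum_n\delta_{(T_n,\Theta_n,Z_n)}$ admits the predictable compensator $\nu(\mathrm ds,\mathrm d\theta,\mathrm dz)=\lambda(s)\,q_s(\mathrm d\theta,\mathrm dz)\,\mathrm ds=\alpha(s,\theta)Q(\mathrm d\theta,\mathrm dz)\,\mathrm ds$, by the very definition of $\lambda$ and of the conditional mark law $q_{T_n}$; crucially, conditional on the jump times the marks $(\Theta_n,Z_n)$ are independent of the history strictly before $T_n$. Since the integrand $H(s,\theta,z):=f(X(s^-+\theta)+z)-f(X(s^-))$ is left-continuous and adapted in $s$ (the quantity $X(s^-+\theta)$ is $\mathcal F_{s^-}$-measurable because $\theta\le 0$), hence predictable, and measurable in $(\theta,z)$, the compensation formula together with Fubini (justified by $|H|\le 2\|f\|_\infty$) yields
$$g(t)-g(0)=\int_0^t\lambda(s)\int_S\big[\mathbb{E}\big(f(X(s^-+\theta)+z)\big)-\mathbb{E}\big(f(X(s^-))\big)\big]\,q_s(\mathrm d\theta,\mathrm dz)\,\mathrm ds.$$

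Next I would remove the left limits: the compensator $\lambda(s)\,\mathrm ds$ is diffuse, so the jump times have no atoms and for every fixed $s$ (resp.\ fixed $s$ and $\theta$) one has $\mathbb{E}(f(X(s^-)))=\mathbb{E}(f(X(s)))$ and $\mathbb{E}(f(X(s^-+\theta)+z))=\mathbb{E}(f(X(s+\theta)+z))$, replacing $s^-$ by $s$ in the integrand. Finally, I would verify that the resulting integrand
$$s\longmapsto\lambda(s)\int_S\big[\mathbb{E}(f(X(s+\theta)+z))-\mathbb{E}(f(X(s)))\big]\,q_s(\mathrm d\theta,\mathrm dz)$$
is continuous on $[0,\infty)$: the functions $\lambda$ and $q_s$ depend continuously on $s$ through the continuity of $\alpha$, the map $s\mapsto\mathbb{E}(f(X(s)))$ is continuous because the diffuse jump times preclude atoms in the law of $X(s)$ (and for $s+\theta\in[-1,0]$ one invokes instead the assumed continuity of $u^0$), and dominated convergence handles the $q_s$-integral. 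The fundamental theorem of calculus then shows that $g$ is $C^1$ on $(0,\infty)$ with derivative equal to this integrand, which is precisely \eqref{eq:law}.

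I expect the main obstacle to be the rigorous justification of the compensation formula: one must confirm that the marks are conditionally independent of the strict past $\mathcal F_{T_n^-}$ and that $H$ is genuinely predictable, so that $\lambda(s)q_s(\mathrm d\theta,\mathrm dz)\,\mathrm ds$ is indeed the correct compensator of the marked point process. A direct infinitesimal alternative --- estimating $g(t+h)-g(t)$ by conditioning on the number of jumps in $(t,t+h]$, so that zero jumps contribute nothing, one jump contributes $\lambda(t)h\int_S[\cdots]\,q_t+o(h)$ by right-continuity of the paths and continuity of $\alpha$, and two or more jumps contribute $o(h)$ --- reaches the same formula, but it requires the same delicate decoupling of the random jump time, the mark, and the delayed path value $X(T^-+\Theta)$.
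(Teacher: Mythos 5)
Your proposal is correct, but it takes a genuinely different route from the paper's proof. The paper never invokes the compensation formula: it decomposes $\EE(f(X(t)))$ according to which inter-jump interval $[T_i,T_{i+1})$ contains $t$, computes each term by conditioning on the history up to $T_i$ (producing the survival factor $e^{-\Lambda((T_i,t])}$), inserts the explicit Gamma-type density $\lambda(s)\Lambda((0,s])^{i-1}e^{-\Lambda((0,s])}/(i-1)!$ of $T_i$ together with the jump rule and the conditional mark law, and sums the series to reach the closed Volterra identity
\begin{equation*}
\EE(f(X(t)))=e^{-\Lambda((0,t])}\EE(f(U(0)))+\int_0^t e^{-\Lambda((s,t])}\lambda(s)\int_{S}\EE(f(X(s+\theta)+z))\,q_s(\d \theta,\d z)\,\d s,
\end{equation*}
which is then differentiated; there the loss term $-\lambda(t)\EE(f(X(t)))$ arises from differentiating the survival factor, whereas in your argument it comes directly from the telescoping. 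Your route buys brevity and standard point-process machinery; its price is exactly the obstacle you flag, namely identifying $\alpha(s,\theta)Q(\d\theta,\d z)\,\d s$ as the compensator and the predictability of the integrand. Both do hold here: conditionally on all the $T_m$'s the marks are independent with a law depending only on their own time, so each $(\Theta_n,Z_n)$ is conditionally independent of the strict past given $T_n$; and since the inter-arrival laws are diffuse one has $X(T_n^-+\Theta_n)=X((T_n+\Theta_n)^-)$ a.s., so one may work with the genuinely predictable left-limit version. This is a gap of detail, not of substance. What the paper's route buys is that it is elementary and self-contained (only explicit Poisson densities are used) and it produces a renewal-type equation of independent interest; note, however, that it relies on the very same decoupling property: replacing $\EE(f(X(T_i))\,|\,T_i=s)$ by $\int_S\EE(f(X(s+\theta)+z))\,q_s(\d\theta,\d z)$ is not valid term by term, since conditioning on $T_i=s$ biases the configuration of earlier points, and it is only justified after summation over $i$ by the Slivnyak--Mecke property of the Poisson process --- which is morally the compensation formula you invoke. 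Finally, your explicit continuity check of the integrand, needed to upgrade the integral identity to continuous differentiability, fills in a step the paper compresses into ``the result follows by derivation''.
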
 

\begin{proof}
We use the decomposition of the process induced by the sequence of times $(T_n)_{n\in\mathbb{N}}$. For any $t\geq0$, we get
$$
\EE(f(X(t)))=\sum_{i=0}^\infty \EE(1_{[T_i,T_{i+1})}(t)f(X(T_i))).
$$
For any integer $i$, by conditioning on the history up to the time $T_i$, we have
$$
\EE(1_{[T_i,T_{i+1})}(t)f(X_{T_i}))= \EE\left(1_{T_i\leq t}e^{-\Lambda((T_i,t])}f(X(T_i))\right).
$$
Since $T_0=0$ is deterministic, we distinguishes this time and then use the law of $T_n$ for $n\in\mathbb{N}$ to write:
\begin{align*}
\EE(f(X(t)))=&~e^{-\int_0^t\lambda(s)\d s}\EE(f(U(0)))+\EE\left(1_{T_i\leq t}e^{-\Lambda((T_i,t])}f(X(T_i))\right)\\
=&~e^{-\Lambda((0,t])}\EE(f(U(0)))\\
&+\sum_{i=1}^\infty\int_0^t \hspace{-0.1cm}e^{-\Lambda((s,t])}\EE(f(X(T_i))|T_i=s)\lambda(s)\frac{\Lambda((0,s])^{i-1}}{(i-1)!}e^{-\Lambda((0,s])}\d s.
\end{align*}
Using the fact that $X(T_i)=X(T_i^-+\Theta_i)+Z_i$, we obtain
\begin{align*}
\EE(f(X(t)))=&~e^{-\Lambda((0,t])}\EE(f(U(0)))\\
&+\sum_{i=1}^\infty\int_0^t e^{-\Lambda((s,t])}\int_{S}\EE(f(X(s+\theta)+z))q_s(\d \theta,\d z)\\
&\hspace{5cm}\lambda(s)\frac{\Lambda((0,s])^{i-1}}{(i-1)!}e^{-\Lambda((0,s])}\d s.
\end{align*}
Finally, by summation (using the fact that $f$ is bounded),
\begin{align*}
\EE(f(X(t)))=&~e^{-\Lambda((0,t])}\EE(f(U(0)))\\
&+\int_0^t e^{-\Lambda((s,t])}\lambda(s)\int_{S}\EE(f(X(s+\theta)+z))q_s(\d \theta,\d z)\d s.
\end{align*}
The result follows by derivation.
\end{proof}

{Denoting by $\mu(t,\d x)$ the law of $X(t)$, equations \eqref{eq:law} and \eqref{eq-retard} are equivalent. Thus, the process $X$ gives a probabilistic representation of the evolution equation studied in the present paper.} Notice that in general, $X$ is not a Markov process: the variables $\Theta_n$'s introduce memory and model the delay in the evolution equation \eqref{eq-retard}. {When there is no delay, that is when $Q(\d \theta,\d z)=\delta_0(\d \theta)Q(\d z)$, the studied process is simply an inhomogeneous continuous time Markov chain, as already mentioned in the introduction.} Let us also notice that this is always possible to recast a non-Markovian process into a Markovian one: for processes very similar to the process $X$ under consideration, this has been done in \cite{Meerschaert}. However, the fact that the evolution of the law of $X$ satisfies equation \eqref{eq-retard} allows for a more direct approach. 

Theorem \ref{THEO2} is the central limit theorem associated to the process $X$. Its proof will be the object of Section \ref{sec:proof}. In probabilistic term, Theorem \ref{THEO2} reads as follows.

\begin{theorem}
The process $(\sqrt t (X(t)/t -{\bf K}))_{t\in(0,\infty)}$ converges in law towards a centred Gaussian random variable with variance-covariance matrix given by 
$$
\mathbf{Q}=\frac{\lambda_\infty}{1-\lambda_\infty\EE(\Theta_\infty)}\EE((Z_\infty+\Theta_\infty {\bf K})(Z_\infty+\Theta_\infty {\bf K})^T),
$$
where the couple $(\Theta_\infty,Z_\infty)$ has for law $\alpha_\infty(\theta)Q(\d \theta,\d z)/\lambda_\infty$.
\end{theorem}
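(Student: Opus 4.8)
The plan is to recognise that this statement is nothing but the probabilistic transcription of Theorem \ref{THEO2}, so that essentially no new analysis is required: the only genuine work is a change-of-variables bookkeeping matching the two expressions for the limiting covariance. First I would invoke the Proposition just proved, together with the subsequent remark that $\mu(t,\d x)$, the law of $X(t)$, solves \eqref{eq-retard} with the prescribed initial data. By the uniqueness of solutions to \eqref{eq-retard}, $\mu$ is precisely the measure-valued trajectory to which Theorem \ref{THEO2} applies.

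Next, for every $f\in\mathcal C_b(\R^N)$, I would rewrite the rescaled integral appearing in Theorem \ref{THEO2} as an expectation, using the elementary identity $\frac{x-{\bf K}t}{\sqrt t}=\sqrt t\left(\frac{x}{t}-{\bf K}\right)$, so that
$$
\int_{\R^N}f\left(\frac{x-{\bf K}t}{\sqrt t}\right)\mu(t,\d x)=\EE\left(f\left(\sqrt t\left(\frac{X(t)}{t}-{\bf K}\right)\right)\right).
$$
Theorem \ref{THEO2} asserts that the left-hand side converges to $\int_{\R^N}f\,\d\pi$ for every bounded continuous $f$, which is exactly the definition of convergence in law of $\sqrt t\left(X(t)/t-{\bf K}\right)$ towards $\pi$ as $t\to\infty$.

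It then remains to identify $\pi$ with the centred Gaussian of covariance $\mathbf Q$. By the explicit description following Theorem \ref{THEO2}, $\pi=\Pi(1,\cdot)$ is the centred multivariate Gaussian with covariance matrix $\frac{1}{1+\Gamma}D_0$, so I must check $\mathbf Q=\frac{1}{1+\Gamma}D_0$. Writing expectations against the law $\alpha_\infty(\theta)Q(\d\theta,\d z)/\lambda_\infty$ of $(\Theta_\infty,Z_\infty)$, I would observe that
$$
\EE\big((Z_\infty+\Theta_\infty{\bf K})(Z_\infty+\Theta_\infty{\bf K})^T\big)=\frac{1}{\lambda_\infty}\int_S \alpha_\infty(\theta)\,[z+\theta{\bf K}][z+\theta{\bf K}]^T\,Q(\d\theta,\d z)=\frac{1}{\lambda_\infty}D_0,
$$
by \eqref{11} (with $\overline{\bf K}={\bf K}$), and that $\lambda_\infty\EE(\Theta_\infty)=\int_S \theta\,\alpha_\infty(\theta)\,Q(\d\theta,\d z)=-\Gamma$ by \eqref{9}. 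Substituting both into the defining formula for $\mathbf Q$ yields $\mathbf Q=\frac{\lambda_\infty}{1-\lambda_\infty\EE(\Theta_\infty)}\cdot\frac{1}{\lambda_\infty}D_0=\frac{1}{1+\Gamma}D_0$, as required.

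There is no serious analytic obstacle here: all the hard work---the Young-measure identification and the fractional Sobolev tightness estimates that upgrade it to genuine narrow convergence---is already contained in Theorem \ref{THEO2}. The only points demanding care are the normalisation constants: one must track the factor $1/\lambda_\infty$ coming from the definition of the law of $(\Theta_\infty,Z_\infty)$ and verify the sign convention making $1-\lambda_\infty\EE(\Theta_\infty)=1+\Gamma$ positive, which is guaranteed since $\Theta_\infty\in[-1,0]$ forces $\Gamma\geq 0$. I would also remark that, because the limiting law is a fixed Gaussian independent of $t$, convergence against bounded continuous test functions does characterise convergence in law on $\R^N$, so no further tightness argument is needed at this last step.
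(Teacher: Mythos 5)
Your proposal is correct and follows essentially the same route as the paper, which presents this theorem precisely as the probabilistic restatement of Theorem \ref{THEO2} applied to $\mu(t,\d x)=$ law of $X(t)$ (justified by the preceding Proposition and uniqueness of solutions to \eqref{eq-retard}), with the covariance identification $\mathbf Q=\frac{1}{1+\Gamma}D_0$ following from $\lambda_\infty\EE(\Theta_\infty)=-\Gamma$ and $\lambda_\infty\EE\bigl((Z_\infty+\Theta_\infty{\bf K})(Z_\infty+\Theta_\infty{\bf K})^T\bigr)=D_0$. Your write-up in fact makes explicit the normalisation bookkeeping that the paper leaves implicit; nothing is missing.
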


Of course, this central limit theorem implies that  the process $(X(t)/t)_{t\in(0,\infty)}$ converges in probability towards ${\bf K}$. We can strengthen this convergence using the explicit construction of the process and obtain the following strong law of large numbers.

\begin{theorem}
Assume the result of Theorem \ref{THEO2}. Then, the process $(X(t)/t)_{t\in(0,\infty)}$ converges almost-surely towards ${\bf  K}=\frac{\lambda_\infty\EE(Z_\infty)}{1-\lambda_\infty\EE(\Theta_\infty)}$ where the couple $(\Theta_\infty,Z_\infty)$ has for law $\alpha_\infty(\theta)Q(\d \theta,\d z)/\lambda_\infty$.
\end{theorem}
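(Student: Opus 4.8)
The plan is to upgrade the in-probability convergence $X(t)/t\to\mathbf K$, already granted by the central limit theorem stated above (itself a reformulation of Theorem \ref{THEO2}), to an almost-sure statement by exploiting the explicit pathwise construction of $X$. A first reduction transfers the problem from continuous time to the jump times. Since Assumption \ref{ASS1}(i) forces $\lambda(t)\to\lambda_\infty>0$, the intensity measure obeys $\Lambda((0,t])/t\to\lambda_\infty$; the compensated process $N(t)-\Lambda((0,t])$ being a martingale with variance $\Lambda((0,t])$, the strong law of large numbers for Poisson processes yields $N(t)/t\to\lambda_\infty$ and hence $T_n/n\to 1/\lambda_\infty$ almost surely. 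As $X$ is piecewise constant with $X(t)=X(T_{N(t)})$ and $T_{n+1}/T_n\to 1$, it then suffices to prove $X(T_n)/n\to\mathbf K/\lambda_\infty$ almost surely, the continuous-time statement following by sandwiching $X(t)/t=\big(X(T_{N(t)})/T_{N(t)}\big)\big(T_{N(t)}/t\big)$ with $T_{N(t)}/t\to 1$.

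Next I would set up second-moment control. Using the recursion $X(T_k)=X(T_k^-+\Theta_k)+Z_k$, write $X(T_n)=X(T_0)+\sum_{k=1}^n W_k$ with increments $W_k=Z_k-\big(X(T_{k-1})-X(T_k^-+\Theta_k)\big)$, the bracket being the displacement of the path over the backward window $[T_k+\Theta_k,T_k)$. Assumption \ref{ASS1}(ii) gives $\sup_k\EE|Z_k|^2<\infty$, and I would propagate this through the construction to obtain the key estimate $\mathrm{Var}\,X(t)=O(t)$ (equivalently $\EE|X(t)|^2=O(t^2)$ with linear fluctuation), matching the scaling $\sqrt t\,(X(t)/t-\mathbf K)\Rightarrow\mathcal N(0,\mathbf Q)$. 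This bound does double duty: it renders the family $(X(t)/t)_t$ uniformly integrable, so the in-probability limit sharpens to $\EE(X(t)/t)\to\mathbf K$, and it feeds the Borel--Cantelli step below.

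With the variance bound in hand I would run a subsequence-plus-oscillation argument. Along a geometric grid $t_p=\rho^p$, Chebyshev gives $\mathbb{P}\big(|X(t_p)/t_p-\EE(X(t_p)/t_p)|>\varepsilon\big)\le \mathrm{Var}\,X(t_p)/(\varepsilon t_p)^2=O(\rho^{-p})$, which is summable; Borel--Cantelli together with the mean convergence then gives $X(t_p)/t_p\to\mathbf K$ almost surely. To fill the gaps I would bound the maximal block displacement $\max_{t_p\le t\le t_{p+1}}|X(t)-X(t_p)|$ by a Doob-type inequality for the martingale part of $\sum W_k$, obtaining an $L^2$ bound of order $(\rho-1)t_p$; a further Borel--Cantelli shows $\limsup_p t_p^{-1}\max_{t_p\le t\le t_{p+1}}|X(t)-X(t_p)|\le C(\rho-1)$ almost surely. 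Combined with $X(t_p)/t_p\to\mathbf K$ this gives $\limsup_t|X(t)/t-\mathbf K|\le C(\rho-1)$ a.s., and letting $\rho\downarrow 1$ along a countable sequence yields $X(t)/t\to\mathbf K$ almost surely, the value $\mathbf K$ being read off from \eqref{9}.

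The main obstacle is the delay-induced dependence that permeates every moment estimate: because $W_k$ involves the displacement of $X$ over a backward window of length up to one, the process $X$ has neither independent nor martingale increments, so the seemingly routine bound $\mathrm{Var}\,X(t)=O(t)$ is in fact the crux. Concretely, summing the recursion produces the self-referential identity $\sum_{k\le n}(1+m_k)W_k=\sum_{k\le n}Z_k+(\text{boundary})$, where $m_k=\#\{j:T_k<T_j<T_k-\Theta_j\}$ counts the later jumps whose delay window covers jump $k$ and satisfies $\EE\,m_k\to\lambda_\infty\EE|\Theta_\infty|=\Gamma$; this is exactly the mechanism forcing the self-consistent drift $\mathbf K(1+\Gamma)=\lambda_\infty\EE(Z_\infty)$ of \eqref{9}, but it also shows that the overlapping windows couple $W_k$ across $O(\lambda_\infty)$ neighbouring indices. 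Controlling these correlations — so as to decouple $m_k$ from $W_k$ in the Cesàro averages and to close the variance and oscillation estimates — is the technical heart of the proof; once it is achieved, the identification of the limit comes for free from Theorem \ref{THEO2}.
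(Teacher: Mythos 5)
Your proposal is a program rather than a proof: the two estimates that carry all the weight are stated as goals and never established, and you say so yourself. The Chebyshev--Borel--Cantelli step along the grid $t_p=\rho^p$ needs $\mathrm{Var}\,X(t)=O(t)$, and the gap-filling step needs a Doob-type maximal inequality for ``the martingale part of $\sum W_k$''; but, as you yourself observe, the increments $W_k$ are neither independent nor martingale differences --- they satisfy the self-referential relation $W_k=Z_k-\sum_{j\,:\,T_k+\Theta_k<T_j\le T_{k-1}}W_j$, which couples each $W_k$ to all jumps in a backward window of length up to one --- and no martingale decomposition, covariance estimate, or decoupling argument is actually carried out. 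Note also that Theorem \ref{THEO2} cannot supply the missing bound: it gives convergence in law of $\sqrt t\,\bigl(X(t)/t-{\bf K}\bigr)$, hence convergence in probability of $X(t)/t$, but convergence in law does not control second moments, so $\mathrm{Var}\,X(t)=O(t)$ does not follow from it. Since your almost-sure conclusion rests entirely on these two unproven estimates, the argument does not close as written.

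The paper's own proof shows that this second-moment machinery is unnecessary. Its key observation is a pathwise, correlation-free bound: each jump relocates the walker to a previously occupied position plus $Z_i$, so by induction
$$
|X(t)|\le \max_{\theta\in[-1,0]}|U(\theta)|+\sum_{i=1}^{N(t)}|Z_i|.
$$
Combining $N(t)/t\to\lambda_\infty$ a.s. with the conditional strong law of large numbers applied to $\frac{1}{N(t)}\sum_{i=1}^{N(t)}|Z_i|$ --- which uses only first moments of $|z|$ and is immune to the delay-induced dependence --- the paper concludes that $X(t)/t$ is almost-surely asymptotically bounded, and then upgrades the convergence in probability furnished by Theorem \ref{THEO2} to almost-sure convergence. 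Your time-change reduction and the identification of ${\bf K}$ via \eqref{9} are correct, but the heart of your plan (variance and oscillation control across overlapping delay windows) is precisely the part you leave open, and precisely the part the paper's argument is built to avoid.
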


\begin{proof}
First, notice that for any $t>0$,
$$
|X(t)|\leq \max_{\theta\in[-1,0]}|U(\theta)|+\sum_{i=1}^{N(t)} |Z_i|.
$$
Therefore,
$$
\frac{|X(t)|}{t}\leq \frac1t\max_{\theta\in[-1,0]}|U(\theta)|+\frac{N(t)}{t}\frac{1}{N(t)}\sum_{i=1}^{N(t)} |Z_i|.
$$
We have, almost-surely,
$$
\lim_{t\to\infty}\frac{N(t)}{t}=\lim_{t\to\infty}\frac{\Lambda((0,t])}{t}=\lambda_\infty.
$$
Thanks to Assumption \ref{ASS1} and the conditional independence of the $Z_i$'s, we can use the conditional law of large number, see for instance \cite{Majerek} and obtain that we almost-surely have,
$$
\lim_{t\to\infty}\frac{1}{N(t)}\sum_{i=1}^{N(t)} |Z_i|=\frac{1}{\lambda_\infty}\int_S |z| \alpha_\infty(\theta) Q(\d \theta, \d z).
$$
The process $(X(t)/t)_{t\in(0,\infty)}$ is thus almost-surely asymptotically bounded. Therefore, its convergence in probability towards the deterministic value ${\bf K}$ implies its almost-sure convergence towards this same value.
\end{proof}

Remark that since the process $(X(t)/t)_{t\in(0,\infty)}$ is bounded for $t$ large enough, we can use a slight refinement of equation \eqref{eq:law} in order to include an explicit dependence in time and obtain, for $y(t)=\EE(X(t)/t)$ with $t>0$,
\begin{align*}
\frac{\d}{\d t}y(t)=&~\lambda(t)\int_S [y(t+\theta)-y(t)]\left[1+\frac\theta t\right] q_t(\d \theta,\d z)\\
&+\frac{1}{t}\left[-y(t)\left(1-\lambda(t)\int_S \theta q_t(\d \theta,\d z)\right)+\lambda(t)\int_S zq_t(\d \theta,\d z)\right].
\end{align*}
Notice that since ${\bf K}$ is the limit of $(X(t)/t,t>0)$ then, by dominated convergence, ${\bf K}$ is also the limit of $(y(t),t>0)$, a fact that is, at first glance, far to be obvious if you only look at the above differential equation with delay. 

Let us give a first illustration of the two above results by considering the case where $X(\theta)=0$ for all $\theta\in[-1,0]$ and
$$
\alpha(t,\theta)=1,\qquad Q(\d \theta,\d z)=\delta_{0}(\d\theta)\delta_{1}(\d z).
$$
Then $X$ is the classical Poisson process satisfying
$$
\lim_{t\to\infty}\frac{X(t)}{t}=1\text{ a.s.},\qquad \lim_{t\to\infty}\sqrt{t}\left(\frac{X(t)}{t}-1\right)=\mathcal{N}(0,1)\text{ in law,}
$$
where $\mathcal{N}(0,1)$ denotes the centred gaussian law with variance $1$. Now, with a one unit time delay, namely with
$$
\alpha(t,\theta)=1,\qquad Q(\d \theta,\d z)=\delta_{-1}(\d\theta)\delta_{1}(\d z),
$$
our results give
$$
\lim_{t\to\infty}\frac{X(t)}{t}=\frac12\text{ a.s.},\qquad \lim_{t\to\infty}\sqrt{t}\left(\frac{X(t)}{t}-\frac12\right)=\mathcal{N}\left(0,\frac18\right)\text{ in law.}
$$
In this example, we see that the delay slow down the process and reduces its scattering.

{As a second illustration, we consider the stochastic process associated to the hyperbolic equation with delay (\ref{hyperbolic}) considered at the end of the previous section. In this setting, for $t>0$ and $\theta\in[-1,0]$, the parameters are
\begin{equation}\label{hdde_parameters}
\alpha(t,\theta)=a e^{-b\theta}\frac{y(t+\theta)}{y(t)}\text{ and }Q(\d\theta,\d y)=\eta(\d\theta)\otimes \delta_{ q\theta}(\d y)\in \mathcal P\left([-1,0]\times \R\right),
\end{equation}
such that
$$
\lambda(t)=\int_{[-1,0]\times \R} \alpha(t,\theta)Q(\d\theta,\d z)=\int_{[-1,0]}a e^{-b\theta}\frac{y(t+\theta)}{y(t)}\eta(\d\theta).
$$
For our simulation purpose, as in Laurent et al in \cite{15}, we set $\eta(\d \theta)=\delta_{-1}(\d\theta)$. In this case,
$$
\lambda(t)=a e^{b}\frac{y(t-1)}{y(t)},
$$
where
\begin{equation}\label{DDeqLaurent}
\begin{split}
&y'(t)=ae^{b}y(t-1),\;t>0,\\
&y(\theta)=y^0(\theta):=\int_{\R}u^0(\theta,x)\d x,\;\forall \theta\in [-1,0],
\end{split}
\end{equation}
such that the intensity measure is given by $\Lambda((0,t])={\rm log}(y(t)/y(0))$. In Figure \ref{examplehdde}, the law of large numbers as well as the associated central limit theorem are illustrated for this process. We also display, in Figure \ref{exampleddebis}, another illustration of these two limit theorems with $Q(\d \theta,\d y)=\delta_{-1}(\d \theta)\otimes \mathbf{1}_{[-1/2,1/2]}(y)\d y$, all other things being equal.}

\begin{figure}
\begin{center}
\includegraphics[width=5.5cm]{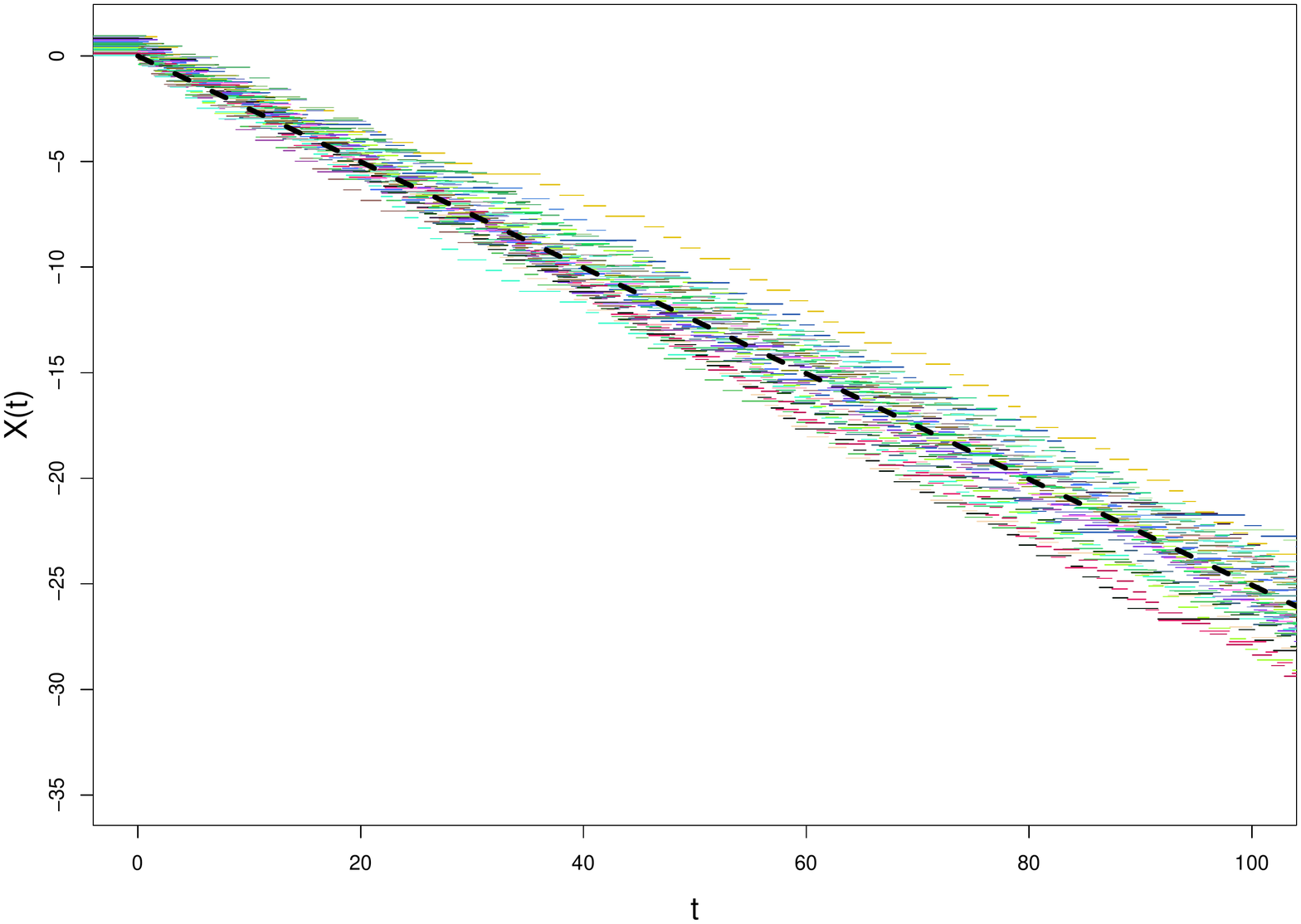}\hspace{1cm}\includegraphics[width=5.5cm]{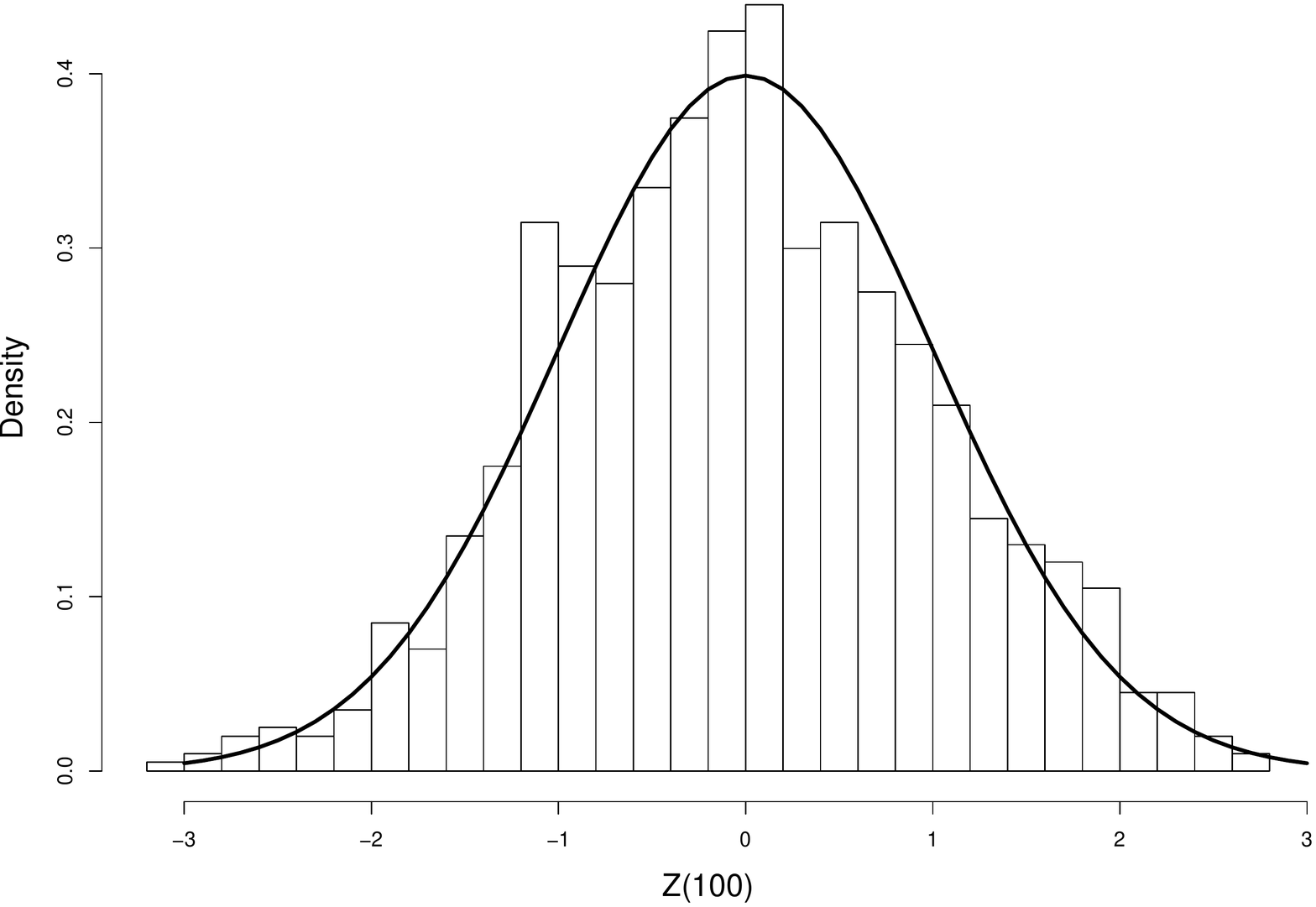}
\caption{{Left: 40 trajectories of $X$ (one color per trajectory) with the parameters defined in (\ref{hdde_parameters}) with $a=1.01$, $b=1$, $q=1$ and initial condition $u^0(\theta,\d x)=\mathbf{1}_{[0,1]}(x)\d x$ (uniform law on $[0,1]$). The dashed line has slope $\mathbf{K}\simeq -0.25$. Right: distribution of $Z(100)=\sqrt{100}\frac{X(100)/100-K}{{\sqrt\mathbf{Q}}}$ (${\sqrt{\mathbf{ Q}}}\simeq0.18$), obtained from $1000$ trajectories, compared to the density of the normal law.}}\label{examplehdde}
\end{center}
\end{figure}

\begin{figure}
\begin{center}
\includegraphics[width=5.5cm]{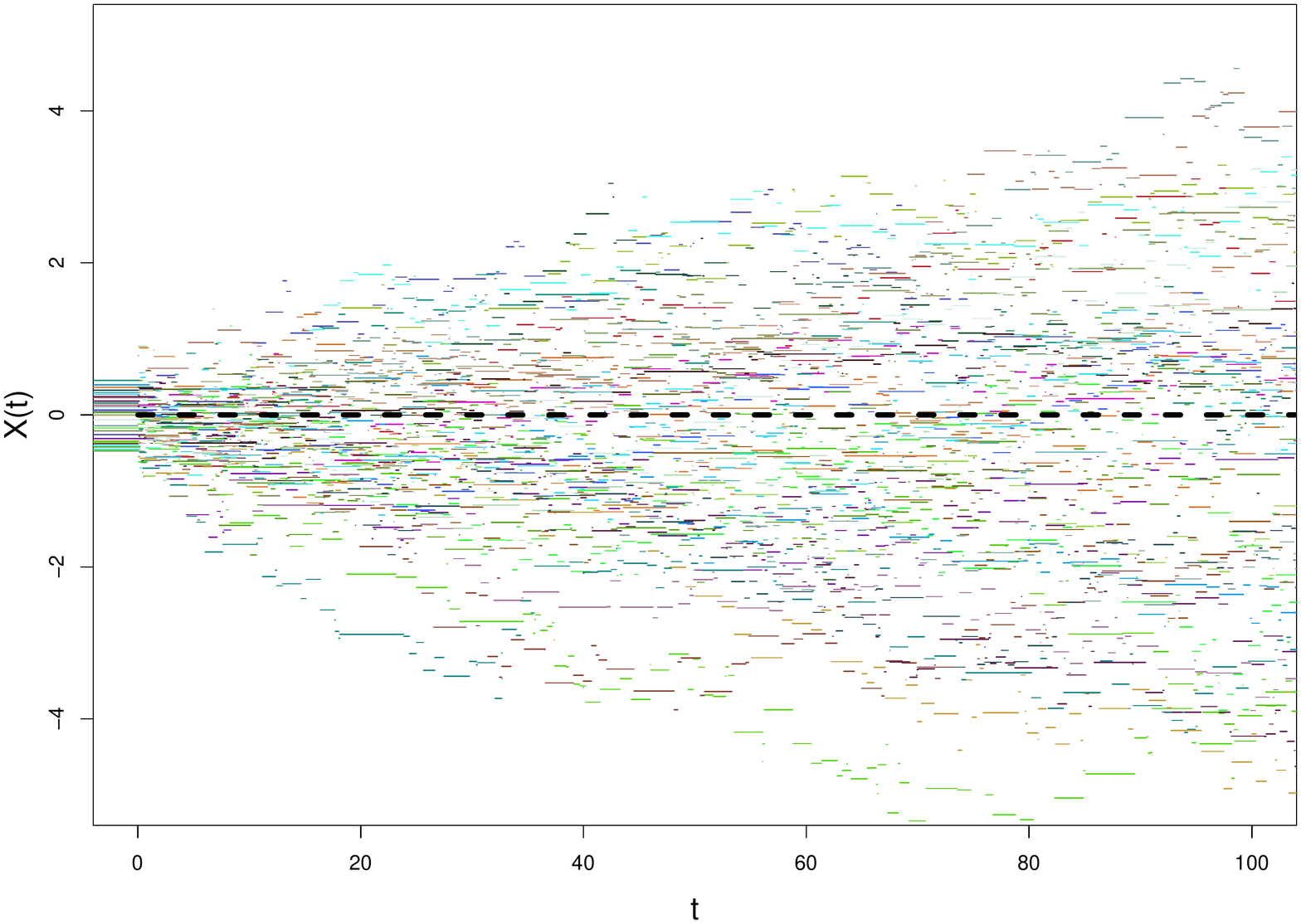}\hspace{1cm}\includegraphics[width=5.5cm]{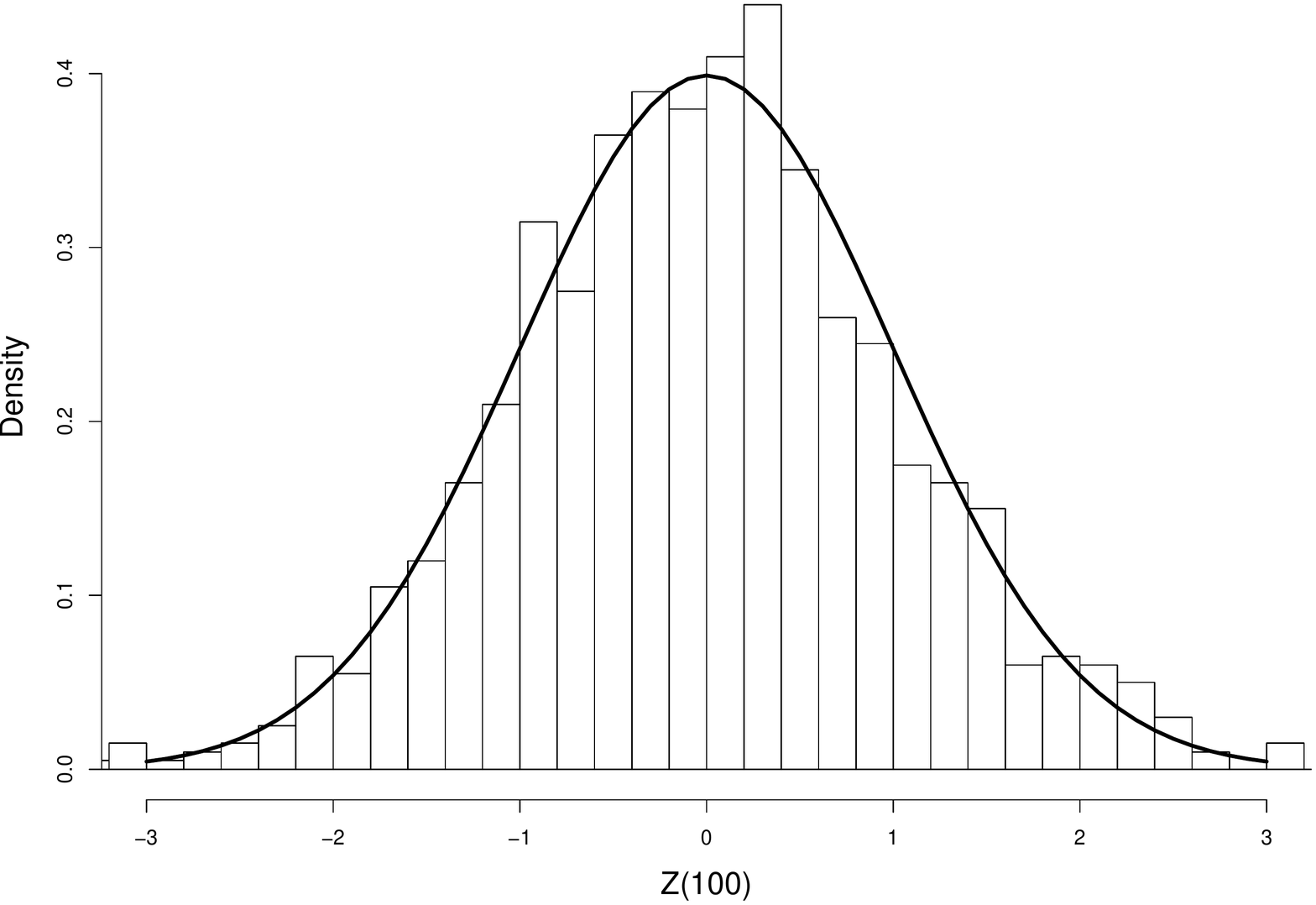}
\caption{{Left: 40 trajectories of $X$ (one color per trajectory) with the parameters defined in (\ref{hdde_parameters}) with $a=1.01$, $b=1$, $q=1$ but $Q(\d \theta,\d y)=\delta_{-1}(\d \theta)\otimes \mathbf{1}_{[-1/2,1/2]}(y)\d y$ and initial condition $u^0(\theta,\d x)=\mathbf{1}_{[-1/2,1/2]}(x)\d x$ (uniform law on $[-1/2,1/2]$). The dashed line has slope $\mathbf{K}=0$. Right: distribution of $Z(100)=\sqrt{100}\frac{X(100)/100-K}{\sqrt\mathbf{Q}}$ (${\sqrt\mathbf{Q}}\simeq0.20$), obtained from $1000$ trajectories, compared to the density of the normal law.}}\label{exampleddebis}
\end{center}
\end{figure}


\section{Self-similar rescaling and limit identification}\label{sec:proof}

In this section, we deal with rescaled solution and we shall prove that such
a family of rescaled solution converges toward the solution of the possibly
degenerate parabolic equation \eqref{10} for a suitable topology.

Before going further let us introduce some notations that will be used in the
sequel.
We define $\Gamma>0$ and the vector ${\bf K}\in\R^N$ by
\begin{equation}\label{def-K-Gamma}
\Gamma:=\int_S (-\theta)\alpha_\infty(\theta) Q(\d\theta,\d z)\text{ and }{\bf K}:=\frac{1}{1+\Gamma}\int_S \alpha_\infty (\theta) z Q(\d\theta,\d z). 
\end{equation}
\begin{remark}\label{REM-identity}
Note that the vector ${\bf K}$ satisfies the following identity, that will be used latter,
\begin{equation*}
\int_S \alpha_\infty(\theta)\left[z+\theta {\bf K}\right]Q(\d\theta,\d z)={\bf K}.
\end{equation*}
\end{remark}

We also define the function $H=H(t):[-1,\infty)\to \R^N$ by
\begin{equation}\label{25}
H(t)=-\frac{1}{1+\Gamma}\int_0^t\int_S \alpha(t,\theta) x Q(\d\theta,\d x) \d s,\;\forall t\geq -1.
\end{equation}

As mentioned above we aim at investigating the large behaviour of $t \mapsto\mu(t,\d x)$.
Thus we need to shift this function in order to follow the mass and to not lose its transportation. As
it will appear clearly in the proofs presented below, a suitable shift is to follow
the solution along the path $t\mapsto H(t)$ introduced above in \eqref{25}.
Hence we translate the measure solution $t\mapsto \mu(t,\d x)$ by introducing the measure
\begin{equation}\label{27}
\nu(t,\cdot)=\delta_{H(t)}\ast \mu(t,\cdot)\in\P,\;\forall t\geq -1.
\end{equation}
In order to be more precise, the above convolution product, $\ast$, means that for all $\varphi\in\C_b(\R^N)$:
\begin{equation*}
\int_{\R^N}\varphi(x)\nu(t,\d x)=\int_{\R^N}\varphi(x+H(t))\mu(t,\d x),\;\forall t\geq -1.
\end{equation*}
Notice that with the notations of Section \ref{sec:proba}, $\nu(t,\d x)$ is the law of the process $X+H$ at time $t$.

The following lemma holds true.
\begin{lemma}\label{LE17}
The map $t\mapsto \nu (t,\d x)$ is continuous from $[-1,\infty)$ into $\P$.
It furthermore satisfies, for each test function $\phi\in W^{1,1}\left(0,\infty;\C_b(\R^N)\right)\cap L^1\left(0,\infty;\C_b^1(\R^N)\right)$, the equation
\begin{equation*}
\begin{split}
&\int_{\R^+\times\R^N}\left(\partial_t+H'(t)\nabla\right)\phi(t,x)\,\nu(t,\d x)\d t=-\int_{\R^N}\phi(0,x)\nu(0,\d x)\\
&+\int_0^\infty\int_{\R^N}\left[\int_S\alpha(t,\theta)Q(\d\theta,\d z)\right]\phi(t,x)\nu(t,\d x)\d t\\
&-\int_0^\infty \int_{S}\alpha(t,\theta)\int_{\R^N}\phi\left(t,x+z+G(t,\theta)\right)\nu(t+\theta,\d x)Q(\d \theta,\d z)\d t,
\end{split}
\end{equation*}
wherein we have set $G(t,\theta)=H(t)-H(t+\theta)$.
\end{lemma}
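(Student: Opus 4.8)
The plan is to reduce the statement to the weak formulation satisfied by $\mu$ through the explicit change of unknown $\nu(t,\cdot)=\delta_{H(t)}\ast\mu(t,\cdot)$ defined in \eqref{27}, which at the level of test functions is nothing but the substitution $x\mapsto x+H(t)$. I first dispose of the continuity statement. The function $H$ of \eqref{25} is of class $C^1$ on $[-1,\infty)$: $\alpha$ is bounded by Assumption \ref{ASS1}$(i)$ and $(\theta,x)\mapsto|x|$ is $Q$-integrable since $|\cdot|\in L^2(S;Q)\subset L^1(S;Q)$ by Assumption \ref{ASS1}$(ii)$, so the integrand in \eqref{25} is bounded and $t\mapsto H(t)$ is locally Lipschitz, with $H(0)=0$. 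The map $(y,m)\mapsto\delta_y\ast m$ is continuous from $\R^N\times\P$ into $\P$ for the narrow topology, and $t\mapsto\mu(t,\cdot)$ is continuous by hypothesis; composing these facts yields the continuity of $t\mapsto\nu(t,\cdot)$ from $[-1,\infty)$ into $\P$.

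For the weak equation, introduce the time--dependent test function $g(t,x):=\phi(t,x+H(t))$. Two observations drive the computation. First, by the chain rule $\partial_t g(t,x)=\big[(\partial_t+H'(t)\nabla)\phi\big](t,x+H(t))$, so that by the definition of $\nu$ one has $\int_{\R^N}(\partial_t+H'(t)\nabla)\phi(t,x)\,\nu(t,\d x)=\int_{\R^N}\partial_t g(t,x)\,\mu(t,\d x)$, which produces the left-hand side of the claimed identity. Second, freezing the spatial profile $g(t,\cdot)\in\C_b(\R^N)$ and using the weak formulation of \eqref{eq-retard} (property $(iib)$ of a solution) furnishes the ``$\mu$-part'' of the time derivative. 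Combining the two through a product rule, the map $F(t):=\int_{\R^N}\phi(t,x)\nu(t,\d x)=\int_{\R^N}g(t,x)\mu(t,\d x)$ is absolutely continuous on $[0,\infty)$ with
\begin{align*}
F'(t)=&\int_{\R^N}\partial_t g(t,x)\,\mu(t,\d x)+\int_S\alpha(t,\theta)\int_{\R^N} g(t,x+z)\,\mu(t+\theta,\d x)\,Q(\d\theta,\d z)\\
&-\Big(\int_S\alpha(t,\theta)\,Q(\d\theta,\d z)\Big)\int_{\R^N} g(t,x)\,\mu(t,\d x).
\end{align*}
Integrating over $t\in(0,\infty)$ and invoking the fundamental theorem of calculus produces the boundary contribution $F(0)-\lim_{t\to\infty}F(t)$; here $F(0)=\int_{\R^N}\phi(0,x)\nu(0,\d x)$ because $H(0)=0$, while $\lim_{t\to\infty}F(t)=0$ since $\phi\in L^1(0,\infty;\C_b(\R^N))\cap W^{1,1}(0,\infty;\C_b(\R^N))$ forces $\|\phi(t,\cdot)\|_\infty\to0$ and each $\mu(t,\cdot)$ is a probability measure.

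It then remains to re-express the two surviving integrals in terms of $\nu$. The loss term is immediate, $\int_{\R^N} g(t,\cdot)\,\d\mu(t)=\int_{\R^N}\phi(t,\cdot)\,\d\nu(t)$, which gives the second line of the identity. For the gain term I use the change of variables dual to \eqref{27}: for any $h\in\C_b(\R^N)$ and any $s\geq-1$ one has $\int_{\R^N}h(x)\mu(s,\d x)=\int_{\R^N}h(x-H(s))\nu(s,\d x)$. Applying this with $s=t+\theta$ and $h(x)=g(t,x+z)=\phi(t,x+z+H(t))$ turns $\int_{\R^N}g(t,x+z)\mu(t+\theta,\d x)$ into $\int_{\R^N}\phi\big(t,x+z+H(t)-H(t+\theta)\big)\nu(t+\theta,\d x)=\int_{\R^N}\phi(t,x+z+G(t,\theta))\nu(t+\theta,\d x)$, which is exactly the third line. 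Fubini's theorem, justified by the boundedness of $\alpha$, the fact that $Q$ is a probability measure and the $L^1$-in-time integrability of $\phi$, legitimises every interchange of integrals and yields the stated equation.

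I expect the delicate point to be the product-rule step, since property $(iib)$ only grants differentiability of $t\mapsto\int_{\R^N} f\,\d\mu(t)$ for a \emph{fixed} $f\in\C_b(\R^N)$, whereas here the profile $g(t,\cdot)$ itself moves in time. I would make this rigorous by an approximation argument: approximate $\phi$ in $W^{1,1}(0,\infty;\C_b(\R^N))\cap L^1(0,\infty;\C_b^1(\R^N))$ by test functions that are piecewise constant (simple) in time with values in $\C_b^1(\R^N)$, for which the product rule reduces to property $(iib)$ applied on each time subinterval, and then pass to the limit using the uniform bound $|F'(t)|\leq\|\partial_t g(t,\cdot)\|_\infty+2\|\alpha\|_\infty\|g(t,\cdot)\|_\infty$ together with dominated convergence. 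All remaining steps (the change of variables, the boundary analysis at $0$ and at $\infty$, and the Fubini interchanges) are routine given Assumption \ref{ASS1}.
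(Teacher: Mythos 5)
Your proposal is correct and follows essentially the same route as the paper: both set $\psi(t,x)=\phi(t,x+H(t))$ (your $g$), use the weak formulation (iib) of the equation for $\mu$ together with an integration by parts in time, and then transfer the gain term back to $\nu$ via the translation identity, producing $G(t,\theta)=H(t)-H(t+\theta)$. The only difference is cosmetic: you spell out the product-rule justification for the moving test profile and the vanishing of the boundary term at $t=\infty$, points the paper's proof leaves implicit in the phrase ``integrating by parts yields.''
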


\begin{proof}
Let us first note that since the map $t \mapsto H(t)$ is continuous, it follows from Portemanteau theorem (see Theorem \ref{THEO4} in Appendix A) that the map $t \mapsto\nu(t,\d x)$ is also continuous.

Let $\phi=\phi(t,x)\in  W^{1,1}\left(0,\infty;\C_b(\R^N)\right)\cap L^1\left(0,\infty;\C_b^1(\R^N)\right)$ be a given test function. Then, one has:
\begin{equation*}
\begin{split}
&\int_{\R^+\times\R^N}\left(\partial_t+H'(t)\nabla\right)\phi(t,x)\nu(t,\d x)\d t\\
=&\int_0^\infty\int_{\R^N}\left(\partial_t+H'(t)\nabla\right)\phi(t,x+H(t))\mu(t,\d x)\d t\\
=&\int_0^\infty \int_{\R^N}\partial_t \psi(t,x)\mu(t,\d x)\d t,
\end{split}
\end{equation*}
wherein we have set $\psi(t,x):=\phi(t,x+H(t))$. It is worth noticing that $\psi\in W^{1,1}\left(0,\infty;\C_b(\R^N)\right)\cap L^1\left(0,\infty;\C_b(\R^N)\right)$. Then, integrating by parts yields
\begin{equation*}
\begin{split}
&\int_{\R^+\times\R^N}\left(\partial_t+H'(t)\nabla\right)\phi(t,x)\nu(t,\d x)\d t\\
&=-\int_{\R^N}\phi(0,x)\nu(0,\d x)+\int_0^\infty\int_{\R^N}\left[\int_S\alpha(t,\theta)Q(\theta,\d z)\right]\phi(t,x)\nu(t,\d x)\d t\\
&-\int_0^\infty \int_{S}\alpha(t,\theta)\int_{\R^N}\phi\left(t,x+z+H(t)\right)\mu(t+\theta,\d x)Q(\d \theta,\d z)\d t.
\end{split}
\end{equation*}
On the other hand one has
\begin{equation*}
\begin{split}
&\int_0^\infty \int_{S}\alpha(t,\theta)\int_{\R^N}\phi(t,x+z+H(t))\mu(t+\theta,\d x)Q(\d \theta,\d z)\d t\\
=&\int_0^\infty \int_{S}\alpha(t,\theta)\int_{\R^N}\phi\left(t,x+z+G(t,\theta)\right)\nu(t+\theta,\d x)Q(\d \theta,\d z)\d t.
\end{split}
\end{equation*}
The result follows by coupling the above computations.
\end{proof}

In order to derive the asymptotic self-similar behaviour of $t\mapsto \nu(t,\d x)$ for $t>\!\!>1$ we are interested
in the asymptotic as $\lambda\to\infty$ of the rescaled family of probability measures $p^\lambda(t,\d x)$ defined, for $t \geq -\lambda^{-2}$  and for each test function $\varphi\in\C_b(\R^N)$ by
\begin{equation}\label{def-p-lambda}
\int_{\R^N}\varphi(x) p^\lambda(t,\d x)=\int_{\R^N}\varphi\left(\frac{x}{\lambda}\right)\nu(\lambda^2 t,\d x),\;\forall t\geq -\lambda^{-2}.
\end{equation}
To study its behaviour for $\lambda>\!\!>1$, we first write down the equation satisfied by $p^\lambda$ by noticing that
\begin{equation*}
\int_{\R^N}\varphi(x)\nu(t,\d x)=\int_{\R^N}\varphi(\lambda x)p^\lambda (\lambda^{-2} t,\d x),\;\forall t\geq -1,\;\forall \varphi\in \C_b(\R^N).
\end{equation*}
Notice that with the notations of Section \ref{sec:proba}, $p^\lambda(t,\d x)$ is the law of the rescaled process $\frac{X(\lambda^2 t)+H(\lambda^2 t)}{\lambda}$.

The equation for $p^\lambda$ is expressed in the next lemma: 
\begin{lemma}\label{LE18}
Let $\lambda>0$ be given. Then the function $t\mapsto p^\lambda(t,\d x)$ satisfies, for each test function $\phi\in W^{1,1}\left(0,\infty;\C_b(\R^N)\right)\cap L^1\left(0,\infty;\C_b^1(\R^N)\right)$,
\begin{equation}\label{28}
\begin{split}
&\int_{\R^+\times\R^N}\left(\partial_t+\lambda H'\left(\lambda^2t\right)\nabla\right)\phi p^\lambda(t,\d x)\d t=-\int_{\R^N}\phi(0,x)p^\lambda(0,\d x)\\
&+\lambda^2\int_0^\infty \int_{S\times \R^N}\alpha(\lambda^2 t,\theta)\phi(t,x)p^\lambda (t,\d x)Q(\d \theta,\d z)\d t\\
&-\lambda^2 \int_0^\infty \int_{S\times \R^N}\alpha(\lambda^2 t,\theta)\phi(t,x+z) p^\lambda\left(t+\frac{\theta}{\lambda^2},\d x\right) Q^\lambda (\lambda^2 t,\theta;\d\theta,\d z)\d t\\
\end{split}
\end{equation}
wherein, for each $\lambda>0$, the measure $Q^\lambda (t,\theta;\d\theta,\d z)$ is defined, for each $t\geq -\lambda^{-2}$, for each $\varphi=\varphi(\theta,z)\in \mathcal C_b(S)$, by
\begin{equation*}
\int_S \varphi(\theta,z)Q^\lambda (t,\theta;\d\theta,\d z)=\int_S \varphi\left(\theta,\frac{z}{\lambda}+\frac{1}{\lambda}G(t,\theta)\right)Q(\d\theta,\d z).
\end{equation*}
\end{lemma}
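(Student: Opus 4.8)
The statement to be proved is Lemma \ref{LE18}, which derives the weak equation for the rescaled family $p^\lambda$ directly from the weak equation for $\nu$ established in Lemma \ref{LE17}. The entire proof is a change-of-variables computation: I would start from the identity in Lemma \ref{LE17} and substitute the scaling relation $p^\lambda(t,\d x)$ defined by \eqref{def-p-lambda}, tracking carefully how each of the three terms transforms under the parabolic rescaling $t \mapsto \lambda^2 t$ and $x \mapsto x/\lambda$. The guiding principle is that if $\phi=\phi(t,x)$ is a test function against $p^\lambda$, then the naturally associated test function against $\nu$ is $\Phi(s,y):=\phi\!\left(\lambda^{-2}s,\, y/\lambda\right)$, and I would verify first that $\Phi$ lies in the required regularity class $W^{1,1}(0,\infty;\C_b(\R^N))\cap L^1(0,\infty;\C_b^1(\R^N))$ so that Lemma \ref{LE17} genuinely applies to it.

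\medskip

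\noindent The core of the work is then the term-by-term bookkeeping. First I would handle the left-hand transport term: writing $\int \left(\partial_t + H'(t)\nabla\right)\Phi(t,x)\,\nu(t,\d x)\,\d t$ and performing the substitution $t = \lambda^2 s$, the Jacobian contributes a factor $\lambda^2$ on $\d t$, the chain rule produces $\partial_t \phi$ and a factor $\lambda^{-2}$ from the time derivative of $\Phi$, and the spatial gradient picks up a factor $\lambda^{-1}$; combining these and using \eqref{def-p-lambda} to convert the integral against $\nu(\lambda^2 s,\cdot)$ into one against $p^\lambda(s,\cdot)$ should reproduce exactly the operator $\partial_t + \lambda H'(\lambda^2 t)\nabla$ appearing on the left of \eqref{28}. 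The initial-data term transforms immediately into $-\int \phi(0,x)\,p^\lambda(0,\d x)$. For the two remaining terms, the factor $\lambda^2$ in \eqref{28} is precisely the leftover Jacobian from $\d t = \lambda^2\,\d s$ that is not absorbed elsewhere, which explains the parabolic scaling $\lambda^2$ multiplying the jump terms. The loss/gain structure is preserved: the second term is diagonal and becomes $\lambda^2\int\int \alpha(\lambda^2 t,\theta)\phi(t,x)\,p^\lambda(t,\d x)\,Q(\d\theta,\d z)\,\d t$ directly.

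\medskip

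\noindent The one genuinely delicate term is the last one, the nonlocal gain term involving $\phi(t, x + z + G(t,\theta))$ and the shifted argument $\nu(t+\theta,\cdot)$. Here three things happen simultaneously under rescaling: the time shift $t+\theta$ in $\nu$ becomes $t + \theta/\lambda^2$ in $p^\lambda$ (since $\theta$ is an $O(1)$ delay but time is rescaled by $\lambda^2$); the spatial translation $z + G(t,\theta)$ must be divided by $\lambda$ when passing to the $p^\lambda$ variable; and this division by $\lambda$ is exactly what the auxiliary measure $Q^\lambda(t,\theta;\d\theta,\d z)$ is engineered to record, via its defining relation $\int\varphi(\theta,z)\,Q^\lambda = \int \varphi(\theta, z/\lambda + \lambda^{-1}G(t,\theta))\,Q$. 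I would carry out the substitution $x \mapsto x/\lambda$ inside the inner integral, absorb the shift $G(t,\theta)/\lambda$ together with $z/\lambda$ into the definition of $Q^\lambda$, and rename variables so that the $z$ already appearing additively in $\phi(t,x+z)$ is the pushed-forward jump. The main obstacle, and the step demanding the most care, is keeping the two distinct roles of $\theta$ straight — it is simultaneously a spectator parameter weighting $\alpha(\lambda^2 t,\theta)$, the driver of the time shift $\theta/\lambda^2$, and an integration variable in $Q$ — while ensuring the bookkeeping of the factors of $\lambda$ (one from the spatial rescaling of the shift, $\lambda^2$ from the time Jacobian) matches the target formula exactly. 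Once the three terms are assembled and the identity $G(t,\theta)=H(t)-H(t+\theta)$ is used consistently, the result follows by collecting terms.
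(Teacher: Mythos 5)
Your proposal is correct and follows exactly the route the paper intends: the paper's own proof is the one-line remark that Lemma \ref{LE18} follows from Lemma \ref{LE17} together with the definition \eqref{def-p-lambda} of $p^\lambda$, and your computation (testing Lemma \ref{LE17} against $\Phi(s,y)=\phi(\lambda^{-2}s,y/\lambda)$, tracking the Jacobian $\lambda^2$, the chain-rule factors $\lambda^{-2}$ and $\lambda^{-1}$, the rescaled delay $\theta/\lambda^2$, and absorbing the shift $\lambda^{-1}\left(z+G(\lambda^2 t,\theta)\right)$ into $Q^\lambda$) is precisely that substitution carried out in detail.
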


The proof of the derivation of this equation directly follows from Lemma
\ref{LE17} using the definition of the map $t\mapsto p^\lambda(t,\d x)$. 

Here, one may note that we have used a parabolic scaling but the equation is not invariant under this scaling.
We are now interested in the asymptotic behaviour of $p^\lambda$ as $\lambda\to\infty$ and the
main result of this section reads as follows:
\begin{theorem}[Young measure convergence]\label{THEO19}
Let Assumption \ref{ASS1} be satisfied. Recalling the definition of $\Gamma$ in \eqref{9} and of $D_0$ in \eqref{11}, the following convergence holds true for each test function $f\in L^1\left(0,\infty;\C_b(\R^N)\right)$:
\begin{equation*}
\lim_{\lambda\to\infty}\int_{\R^+\times\R^N} f(t,x)p^\lambda(t,\d x)\d t=\int_{\R^+\times\R^N}f(t,x)\Pi(t,\d x)\d t,
\end{equation*}
where $\Pi=\Pi(t,\d x)\in \C\left([0,\infty);\P\right)$ is the solution of
\begin{equation*}
\partial_t \Pi-\frac12{\rm div}\,\left(\frac{1}{1+\Gamma}D_0\nabla\Pi\right)=\delta_{t=0}\otimes \delta_0. 
\end{equation*}
\end{theorem}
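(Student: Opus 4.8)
The plan is to argue by compactness: first extract limit points of the rescaled family $\{p^\lambda\}_\lambda$ in the Young-measure topology, then show that every limit point is a weak solution of the target heat equation, and finally invoke uniqueness of that solution to upgrade subsequential convergence to convergence of the whole family. First I would establish the a priori bounds that make the family precompact. The decisive estimate is a uniform second moment bound: using Assumption \ref{ASS1}$(ii)$ one controls the spread of $\nu(s,\d x)$ and shows that its variance grows at most linearly in $s$, so that after the parabolic rescaling $\int_{\R^N}|x|^2 p^\lambda(t,\d x)$ stays bounded on compact time intervals, uniformly in $\lambda$. Together with the mass normalization ($p^\lambda(t,\cdot)\in\P$) this yields tightness of $\{p^\lambda(t,\cdot)\}_\lambda$ for (almost) every $t$; combined with the equicontinuity in time inherited from Lemma \ref{LE18}, it provides compactness against test functions $f\in L^1(0,\infty;\C_b(\R^N))$. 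I would then fix a subsequence $p^{\lambda_n}$ converging to a measure-valued limit $\mathbf p(t,\d x)$, which the bounds force to be probability-measure-valued.

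The technical heart is the passage to the limit in the weak equation \eqref{28}, which I would carry out first for smooth, compactly supported $\phi$ (the second-order Taylor expansion in space needs $C^2$ regularity, so the general $C^1$ class is recovered afterwards by density). Writing the loss and gain terms together, the gain term carries the spatial shift $\tfrac{1}{\lambda}\left(z+G(\lambda^2 t,\theta)\right)$ and the time shift $\theta/\lambda^2$. I would use the two structural convergences $\alpha(\lambda^2 t,\theta)\to\alpha_\infty(\theta)$ (exponentially fast, by Assumption \ref{ASS1}$(i)$) and $G(\lambda^2 t,\theta)\to\theta{\bf K}$ (since $H'(s)\to-{\bf K}$), and Taylor expand $\phi$ to second order. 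The prefactor $\lambda^2$ then produces three scales: the $O(\lambda)$ first-order term, which by Remark \ref{REM-identity} and the very definition \eqref{25} of the drift $H$ cancels exactly against the convective term $\lambda H'(\lambda^2 t)\nabla\phi$ on the left-hand side; the $O(1)$ second-order term, which converges to $-\tfrac12\int_0^\infty\!\int_{\R^N} D_0:D^2\phi\,\mathbf p(t,\d x)\,\d t$ by the definition \eqref{11} of $D_0$; and the $O(1)$ contribution of the time shift, which I would convert, by a change of variable $s=t+\theta/\lambda^2$ in the gain term that moves the shift onto $\phi$, into the factor $\int_S(-\theta)\alpha_\infty(\theta)Q(\d\theta,\d z)=\Gamma$ multiplying the time-derivative. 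This is what produces the coefficient $(1+\Gamma)$ in front of $\partial_t$, and after normalization yields precisely the weak form of $\partial_t\mathbf p-\tfrac12{\rm div}\big(\tfrac{1}{1+\Gamma}D_0\nabla\mathbf p\big)=0$ on $(0,\infty)$.

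The initial and source terms require separate care, and this is where I expect the main obstacle. The change of variable in the delayed gain term shifts the lower endpoint of the time integral below zero, so that the history of $p^\lambda$ on $[-\lambda^{-2},0]$ feeds a boundary contribution; this contribution, added to $-\int_{\R^N}\phi(0,x)p^\lambda(0,\d x)\to-\phi(0,0)$, must be shown to assemble into the correct source $\delta_{t=0}\otimes\delta_0$ for the normalized limit. It is precisely the bookkeeping of this history — the delay feeds mass over a time window rather than at $t=0$ only — that makes the source coefficient consistent with $\mathbf p(t,\cdot)$ being a probability measure. Controlling the Taylor remainders uniformly in $\lambda$ (again via the uniform second moment bound) and justifying the exchange of limits under the $\lambda^2$ prefactor are the delicate points; everything else is routine bookkeeping.

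Finally, since the limiting heat equation \eqref{10} admits a unique tempered-distribution solution $\Pi$ — the Gaussian semigroup, cf.\ \cite{SV} — the limit $\mathbf p$ does not depend on the extracted subsequence and equals $\Pi$. Hence the whole family $\{p^\lambda\}_\lambda$ converges, and the stated Young-measure convergence against every $f\in L^1(0,\infty;\C_b(\R^N))$ follows from tightness together with this identification of the unique limit.
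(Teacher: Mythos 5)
Your limit-identification step is essentially the paper's own argument: second-order Taylor expansion of the test function, cancellation of the $O(\lambda)$ drift term against $\lambda H'(\lambda^2 t)\nabla\phi$ via Remark \ref{REM-identity}, convergence of the second-order term to $\tfrac12{\rm div}(D_0\nabla\phi)$, the change of variables in the delayed gain term producing the factor $\Gamma$ in front of $\partial_t$ together with the boundary contribution that assembles the source into $(1+\Gamma)\,\delta_{t=0}\otimes\delta_0$, and finally uniqueness of the tempered-distribution solution to conclude for the whole family. The genuine gap is in your compactness step. Your ``decisive estimate'' --- a uniform bound on $\int_{\R^N}|x|^2\,p^\lambda(t,\d x)$ --- is false under the theorem's hypotheses: Assumption \ref{ASS1}$(ii)$ gives second moments of the jump kernel $Q$ only, while the initial datum $u^0$ is an arbitrary element of $\C\left([-1,0];\P(\R^N)\right)$ and may have no finite moments at all (a Cauchy law, say). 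Since the solution carries the initial position as an additive contribution, $\int_{\R^N}|x|^2\,\nu(s,\d x)=+\infty$ for every $s$ in that case, and the parabolic rescaling does not repair this; a variance bound ``growing at most linearly in $s$'' cannot even hold at $s=0$. Relatedly, the ``equicontinuity in time inherited from Lemma \ref{LE18}'' that you invoke is not available: the weak formulation carries the prefactor $\lambda^2$ and compares $p^\lambda$ with the delayed measure $p^\lambda\left(t+\theta/\lambda^2,\cdot\right)$, so no uniform-in-$\lambda$ time modulus can be read off it; obtaining any pointwise-in-time control is exactly the hard part deferred to Section \ref{sec:proofTHEO2} (Theorem \ref{THEO-compact}), and it is not needed for the Young-measure statement.

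The paper sidesteps both difficulties by soft functional analysis, and this is where the two routes genuinely differ. It extracts limit points by Banach--Alaoglu in $L^\infty_{\omega*}\left(0,\infty;\M^+\right)=\left(L^1(0,\infty;\C_0(\R^N))\right)'$ (Lemma \ref{LE6}), which costs nothing but only yields sub-probability-valued limits --- mass is a priori allowed to escape to infinity. It then identifies every limit point with the solution $\Pi$ of the limiting heat equation (Proposition \ref{PROP20}, the part of the argument your proposal reproduces), and only afterwards observes that $\int_{\R^N}\Pi(t,\d x)=1$: no mass was lost after all. The Tightness Lemma \ref{LE7} then upgrades the weak$*$ convergence against $L^1\left(0,\infty;\C_0(\R^N)\right)$ to the stated convergence against $L^1\left(0,\infty;\C_b(\R^N)\right)$. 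In other words, tightness is a \emph{conclusion} of the identification step, not an a priori input. If you wish to salvage your moment-based route, you must decouple the initial datum from the jump dynamics --- for instance, via the probabilistic representation, writing $X(\lambda^2 t)$ as an initial-history value (which, divided by $\lambda$, tends to $0$ in probability, with no moments required) plus a centred jump sum to which the second-moment bound does apply --- or first prove the result for initial data with finite second moments and then run an approximation argument. As written, the compactness step fails.
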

This results ensures the convergence of $p^\lambda$ to $\Pi$ in a rather weak sense, especially in time. This convergence is not sufficient to prove Theorem \ref{THEO2}. It will be strengthen in the next section to obtain time pointwized convergence.
The proof of the above result will make use of Young measure theory. We refer to reader to \cite{Castaing, Valadier} for more details and also to Appendix A where basic properties of Young measures are recalled.

Note also that if we choose $f(t,x)=f(x)f_T(t)$ where $f$ is continuous and bounded and $f_T$ is the density of some positive random variable $T$, the above result implies that the measure $\int_0^\infty p^\lambda(t,\d x)f_T(t)\d t$ converges in law towards the measure $\int_0^\infty\Pi(t,\d x)f_T(t)\d t$. If we denote by $Z_\lambda$ the rescaled process $\left(\frac{X(\lambda^2 t)+H(\lambda^2 t)}{\lambda}\right)_{t\in\R+}$, one interpretation is to say that $Z_\lambda(T)$, where $T$ is a random variable with density $f_T$ independent of $Z_\lambda$, converges in law when $\lambda$ goes to infinity towards a random variable with law $\int_0^\infty\Pi(t,\d x)f_T(t)\d t$. As mentioned above, the objective of Section \ref{sec:proofTHEO2} will be to replace the random variable $T$ by the deterministic time $1$. The above theorem also tells us that, in the sense of Young measures, the stochastic process $Z_\lambda$ converges towards a $N$-dimensional centred Wiener process with variance-covariance matrix given by $\frac{1}{1+\Gamma}D_0$. 

In order to prove Theorem \ref{THEO19}, we fix a sequence $\{\lambda_n\}_{n\geq 0}\subset (0,\infty)$ such that
\begin{equation}\label{29}
\lim_{n\to\infty}\lambda_n=\infty,
\end{equation}
and we consider the sequence of Young measures 
$$\left\{t\mapsto p^{\lambda_n}(t,\d x)\right\}_{n\geq 0}\subset L_{\omega*}^\infty(0,\infty;\P).
$$
Then, denoting by $\mathcal M^+$ the set of positive Borel measures on $\R^N$, due to Lemma \ref{LE6} there exists a subsequence, still denoted by $\lambda_n$, and $P\equiv P(t,\d x)\in L^\infty_{\omega*}\left(0,\infty;\M^+\right)$ such that
\begin{equation*}
P(t,\d x)\in \M^+\text{ and }\int_{\R^N}P(t,\d x)\leq 1\;a.e.\;t\geq 0,
\end{equation*}
and
\begin{equation}\label{30}
\lim_{n\to\infty} p^{\lambda_n}=P\text{ weakly$*$ in }\left(L^1(0,\infty;\C_b(\R^N))\right)'.
\end{equation}
Above, $L^\infty_{\omega*}\left(0,\infty;\M^+\right)$ denotes the set of weakly$∗$ measurable maps from $\R^+$ into $\M^+$ and that are essentially bounded (see also Appendix A for more details).

In our next proposition, we will identify the limit measure $P=P(t,\d x)$. 
\begin{proposition}\label{PROP20}
Let Assumption \ref{ASS1} be satisfied. Then the function $P\equiv P(t,\d x)$ is in  $L^\infty_{\omega*}\left(0,\infty;\M^+\right)$ defined above is the unique tempered distribution solution of \eqref{10}, that is $P(t,\d x)\equiv \Pi(t,\d x)$.
\end{proposition}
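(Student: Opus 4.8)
The plan is to pass to the limit $n\to\infty$ in the weak equation \eqref{28} of Lemma \ref{LE18}, written for $\lambda=\lambda_n$, and to show that the weak-$*$ limit $P$ from \eqref{30} satisfies the weak formulation of \eqref{10}; the identification $P\equiv\Pi$ then follows from the uniqueness of tempered distribution solutions. Since the limiting equation determines $P$ uniquely, the whole family (not only the extracted subsequence) will converge, which simultaneously establishes Theorem \ref{THEO19}. Throughout I would test against functions $\phi$ that are smooth and compactly supported in $t\in[0,\infty)$ and, say, Schwartz in $x$ (a class dense enough to characterize the solution and permitting second-order Taylor expansions), extending to the test space of Lemma \ref{LE18} only at the end.

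The core computation is the asymptotics of the gain--loss pair in \eqref{28}. First I would perform the substitution $s=t+\theta/\lambda^2$ in the delayed (gain) integral so that both gain and loss terms are integrated against the \emph{same} measure $p^\lambda(t,\d x)$; the difference of the two integrands may then be Taylor expanded to second order in $x$ and to first order in $t$. The $O(\lambda^2)$ contributions cancel, up to the difference $\alpha(\lambda^2t,\theta)-\alpha(\lambda^2t-\theta,\theta)$, which is of order $e^{-\beta\lambda^2t}$ by Assumption \ref{ASS1}$(i)$ and thus disappears after multiplication by $\lambda^2$ and integration in time. The first-order spatial term produces the drift $-\lambda\,\nabla\phi\cdot\int_S\alpha_\infty(\theta)[z+\theta{\bf K}]Q(\d\theta,\d z)=-\lambda\,{\bf K}\cdot\nabla\phi$ by Remark \ref{REM-identity}, where I use $G(\lambda^2t,\theta)\to\theta{\bf K}$ (a consequence of $H'\to-{\bf K}$, see \eqref{25}); this $O(\lambda)$ term cancels exactly the drift $\lambda H'(\lambda^2t)\nabla\phi$ on the left-hand side, all transient corrections being $O(1/\lambda)$ after time integration, again by the exponential decay of $\alpha-\alpha_\infty$. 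The surviving $O(1)$ terms are $-\Gamma\,\partial_t\phi$, from the first-order \emph{time} expansion (since $\int_S\theta\,\alpha_\infty(\theta)Q(\d\theta,\d z)=-\Gamma$), and $-\tfrac12{\rm tr}(D_0 D^2\phi)$, from the second-order spatial term, with $D_0$ recognized from \eqref{11}.

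The delicate point is the source term. The substitution $s=t+\theta/\lambda^2$ moves the lower endpoint of the time integral from $0$ to $\theta/\lambda^2<0$; the resulting sliver $[\theta/\lambda^2,0]$ has length $|\theta|/\lambda^2$, which, against the $\lambda^2$ prefactor and using $p^\lambda(0,\d x)\to\delta_0$ together with the time-continuity of $p^\lambda$, contributes the \emph{finite} boundary term $\int_S(-\theta)\alpha_\infty(\theta)\phi(0,0)Q(\d\theta,\d z)=\Gamma\,\phi(0,0)$. Combined with the genuine initial term $-\int\phi(0,x)p^\lambda(0,\d x)\to-\phi(0,0)$ (here $p^\lambda(0,\d x)$ is the law of $U(0)/\lambda$, which concentrates at $0$), the total source becomes $-(1+\Gamma)\phi(0,0)$. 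Assembling all contributions and passing to the limit along $\lambda_n$ using \eqref{30} gives
\begin{equation*}
(1+\Gamma)\int_{\R^+\times\R^N}\partial_t\phi\,P(t,\d x)\d t+\frac12\int_{\R^+\times\R^N}{\rm tr}\big(D_0 D^2\phi\big)P(t,\d x)\d t=-(1+\Gamma)\phi(0,0),
\end{equation*}
which, after dividing by $1+\Gamma$, is precisely the weak form of \eqref{10}. Uniqueness of the tempered distribution solution (obtained by Fourier transform in $x$, giving $\widehat P(t,\xi)=\exp(-\tfrac{t}{2(1+\Gamma)}\xi^TD_0\xi)$ for $t>0$) then yields $P\equiv\Pi$.

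I expect two steps to require the most care. The first is not overlooking the boundary sliver: it is precisely this term that corrects the source from $-\phi(0,0)$ to $-(1+\Gamma)\phi(0,0)$ and thereby produces the factor $\frac{1}{1+\Gamma}$ in the diffusion, ensuring that $P$ is a genuine probability measure. The second is controlling the second-order Taylor remainder with only the finite second moment granted by Assumption \ref{ASS1}$(ii)$: writing the remainder in modulus-of-continuity form $|w|^2\omega(|w|)$ with $w=(z+G)/\lambda$, one has $\lambda^2|w|^2\omega(|w|)=|z+G|^2\,\omega(|z+G|/\lambda)\to0$ pointwise and is dominated by the integrable $|z+G|^2\sup\omega$, so dominated convergence closes the estimate without third moments. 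A related bookkeeping issue is that the integrands carry $\lambda$-dependent coefficients ($\alpha(\lambda^2t,\cdot)$, $G$, $H'$); each is replaced by its limit plus an error vanishing in $L^1(0,\infty;\C_b(\R^N))$---again via Assumption \ref{ASS1}$(i)$---before invoking the weak-$*$ convergence \eqref{30}.
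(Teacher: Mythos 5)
Your overall route coincides with the paper's: pass to the limit in \eqref{28} along the subsequence of \eqref{30}, identify the limiting equation in $\mathcal S'(\R^+\times\R^N)$, and conclude by uniqueness of the tempered-distribution solution of \eqref{10}. Your treatment of the drift (cancellation via Remark \ref{REM-identity}, transient errors of size $\lambda\, O(e^{-\beta\lambda^2 t})$ integrating to $O(1/\lambda)$) and of the second-order term (dominated convergence with a modulus of continuity, a legitimate alternative to the paper's truncation at $|z|=\lambda^{1/4}$ in the proof of Claim \ref{Claim26}) are sound. The gap is in the $\lambda^2$-weighted zeroth-order terms --- the paper's $\mathcal T_3^\lambda$ of Lemma \ref{LE24}, precisely the step for which the paper develops Lemma \ref{LE23} --- where you make two incorrect claims whose errors happen to cancel.

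First, you assert that the mismatch $\alpha(\lambda^2 t,\theta)-\alpha(\lambda^2 t-\theta,\theta)=O(e^{-\beta\lambda^2 t})$ ``disappears after multiplication by $\lambda^2$ and integration in time''. It does not: $\int_0^\infty \lambda^2 e^{-\beta\lambda^2 t}\,\d t=\beta^{-1}$ for every $\lambda$, so these terms do not vanish; the measures $\lambda^2 F(\lambda^2 t)\,\d t$, with $F(t)=\int_S[\alpha(t,\theta)-\alpha(t-\theta,\theta)]Q(\d\theta,\d z)$, concentrate at $t=0$, and by Lemma \ref{LE23} the corresponding term converges to $\phi(0,0)\int_0^\infty F(s)\,\d s$, which is generically nonzero. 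For the same reason, your closing claim that each $\lambda$-dependent coefficient may be ``replaced by its limit plus an error vanishing in $L^1$'' is false exactly for the $\lambda^2$-weighted terms (it is correct for the $\lambda$-weighted and $O(1)$ ones). Second, after the substitution $s=t+\theta/\lambda^2$, the coefficient carried by the sliver $[\theta/\lambda^2,0]$ is $\alpha(\lambda^2 s-\theta,\theta)$, whose time argument runs over $[0,-\theta]$: these are the \emph{initial} values of $\alpha$, not $\alpha_\infty$, so the sliver's limit is $\phi(0,0)\int_S\int_0^{-\theta}\alpha(u,\theta)\,\d u\,Q(\d\theta,\d z)$ and not $\Gamma\,\phi(0,0)$. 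The paper keeps both boundary contributions and combines them through the telescoping identity
\begin{equation*}
\int_0^\infty F(s)\,\d s-\int_S\int_0^{-\theta}\alpha(u,\theta)\,\d u\,Q(\d\theta,\d z)=\int_S\theta\,\alpha_\infty(\theta)\,Q(\d\theta,\d z)=-\Gamma,
\end{equation*}
which produces the same total, $-\Gamma\,\phi(0,0)$, as your two incorrect values ($0$ for the mismatch and, with the sign of the gain term, $-\Gamma\,\phi(0,0)$ for the sliver). So your final weak equation is right, but only because the two mistakes compensate. To repair the argument you must either track both boundary terms and invoke the identity above (the paper's Step 3), or first replace $\alpha$ by $\alpha_\infty$ \emph{simultaneously} in the gain and loss integrals after the change of variables and prove that the two resulting $O(1)$ boundary contributions, $\pm\,\phi(0,0)\int_S\int_0^\infty[\alpha(u,\theta)-\alpha_\infty(\theta)]\,\d u\,Q(\d\theta,\d z)$, cancel each other; neither cancellation can simply be asserted as ``vanishing''.
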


Since $L^1 \left(0,\infty;\C_0(\R^N)\right)$ is separable, the balls of its dual space are metrizable. This implies the following corollary:
\begin{corollary}\label{CORO21}
Under the assumptions of Proposition \ref{PROP20}, one obtains 
\begin{equation*}
\lim_{\lambda\to\infty} p^\lambda=\Pi(t,\d x)\text{ weakly$*$ in }\left(L^1 \left(0,\infty;\C_0(\R^N)\right)\right)'.
\end{equation*}
\end{corollary}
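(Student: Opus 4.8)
The plan is to promote the subsequential weak$*$ convergence already supplied by Lemma \ref{LE6} and Proposition \ref{PROP20} into convergence of the whole family $\{p^\lambda\}_{\lambda>0}$ as $\lambda\to\infty$. The price for this is to leave the non-separable predual $L^1(0,\infty;\C_b(\R^N))$ used in Theorem \ref{THEO19} and work instead with the separable space $L^1(0,\infty;\C_0(\R^N))$, which is exactly what makes the relevant balls of the dual weak$*$-metrizable. First I would record that the family lives in a fixed ball: for every $\lambda>0$ and a.e.\ $t$, the slice $p^\lambda(t,\cdot)$ is a probability measure, so $p^\lambda$ defines an element of norm at most $1$ in $\left(L^1(0,\infty;\C_0(\R^N))\right)'$. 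Let $B$ denote the closed unit ball of this dual. Since $\C_0(\R^N)$ is separable, $L^1(0,\infty;\C_0(\R^N))$ is separable, whence by Banach--Alaoglu $B$ is weak$*$-compact and, crucially, weak$*$-metrizable; I fix a metric $d$ on $B$ inducing the weak$*$ topology. Note that $\Pi$ itself lies in $B$, since $\Pi(t,\cdot)$ is a probability measure for each $t$.

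The core of the argument is a subsequence-and-identification step. Given any sequence $\lambda_n\to\infty$, the compactness of $(B,d)$ yields a subsequence $\lambda_{n_k}$ and some $P\in B$ with $p^{\lambda_{n_k}}\to P$ for the weak$*$ topology of the $\C_0$-dual. To identify $P$, I would observe that $\{\lambda_{n_k}\}$ still satisfies \eqref{29}, so the reasoning preceding Theorem \ref{THEO19}, namely Lemma \ref{LE6} together with Proposition \ref{PROP20}, applies to a further subsequence and produces weak$*$ convergence towards $\Pi$ in $\left(L^1(0,\infty;\C_b(\R^N))\right)'$. Because $\C_0(\R^N)\subset\C_b(\R^N)$, the inclusion $L^1(0,\infty;\C_0)\subset L^1(0,\infty;\C_b)$ makes the restriction map between the two dual spaces continuous for the respective weak$*$ topologies; hence this further subsequence also converges to $\Pi$ in the $\C_0$-dual. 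By uniqueness of limits in the metric space $(B,d)$ we conclude $P=\Pi$. It is precisely Proposition \ref{PROP20}, identifying the limit as the genuine probability measure $\Pi$, that guarantees no mass is lost when passing to the weaker $\C_0$ topology.

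Finally I would close the argument by the standard subsequence principle, phrased through the metric $d$. If $p^\lambda$ did not converge weak$*$ to $\Pi$ as $\lambda\to\infty$, there would exist $\varepsilon>0$ and a sequence $\lambda_n\to\infty$ with $d(p^{\lambda_n},\Pi)\geq\varepsilon$ for all $n$; but the previous step extracts from it a subsequence along which $d(p^{\lambda_{n_k}},\Pi)\to 0$, a contradiction. Therefore $\lim_{\lambda\to\infty}p^\lambda=\Pi$ weakly$*$ in $\left(L^1(0,\infty;\C_0(\R^N))\right)'$, as claimed.

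The only genuinely delicate point is the bookkeeping between the two dual spaces: one must use the non-separable $\C_b$-theory to \emph{identify} the limit via Proposition \ref{PROP20}, while simultaneously exploiting the separable $\C_0$-theory to secure metrizability of $B$ and hence the sequential characterisation of weak$*$ convergence. Everything else is routine Banach--Alaoglu combined with a subsequence extraction, and no new estimate on the measures $p^\lambda$ is required.
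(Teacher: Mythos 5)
Your proof is correct and follows essentially the same route as the paper: the paper's (one-sentence) justification is precisely that separability of $L^1\left(0,\infty;\C_0(\R^N)\right)$ makes the dual balls weak$*$-metrizable, so that the subsequence extraction of Lemma \ref{LE6} combined with the limit identification of Proposition \ref{PROP20} upgrades, via the standard sub-subsequence principle, to convergence of the whole family. Your write-up simply makes explicit the bookkeeping (unit-ball bound, Banach--Alaoglu, uniqueness of limits in the metric) that the paper leaves implicit.
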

Now note that one has
\begin{equation*}
\int_{\R^N}\Pi(t,x)\d x=1\;a.e.\;t\geq 0.
\end{equation*}
This mass conservation property ensures that the family $\{p_\lambda\}_{\lambda>0}$ is a tight family of Young measures. Hence Lemma \ref{LE7} applies and leads us to the following corollary
\begin{corollary}\label{CORO22}
Under the assumptions of Proposition \ref{PROP20}, for each test function $f\in L^1(0,\infty;\C_b(\R^N))$ one has
\begin{equation*}
\lim_{\lambda\to\infty} \int_{\R^+\times\R^N}f(t,x)p^\lambda(t,\d x)\d t=\int_{\R^+\times\R^N} f(t,x)\Pi(t,\d x)\d t.
\end{equation*}
\end{corollary}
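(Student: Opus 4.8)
The plan is to upgrade the weak-$*$ convergence of Corollary~\ref{CORO21}, which holds only against test functions in $L^1(0,\infty;\C_0(\R^N))$ (continuous and vanishing at spatial infinity), to the full class $L^1(0,\infty;\C_b(\R^N))$. The only obstruction is a possible loss of mass towards spatial infinity, and the mechanism that rules it out is the mass conservation $\int_{\R^N}\Pi(t,\d x)=1$ combined with the fact that each $p^\lambda(t,\cdot)$ is itself a probability measure. This is precisely what the tightness of the Young measure family $\{p^\lambda\}_{\lambda>0}$ encodes, and Lemma~\ref{LE7} is the abstract device converting tightness into convergence against bounded (rather than merely vanishing) integrands.

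Concretely, I would fix a cut-off family $\psi_R\in\C_c(\R^N)$ with $0\le\psi_R\le 1$, $\psi_R\equiv 1$ on $\{|x|\le R\}$ and $\psi_R\equiv 0$ outside $\{|x|\le 2R\}$, and, for $f\in L^1(0,\infty;\C_b(\R^N))$, split
\[
\int_{\R^+\times\R^N}\! f\,p^\lambda\,\d t-\int_{\R^+\times\R^N}\! f\,\Pi\,\d t = A^R_\lambda + B^R_\lambda - C^R,
\]
with $A^R_\lambda=\int f\psi_R\,(p^\lambda-\Pi)$, $B^R_\lambda=\int f(1-\psi_R)\,p^\lambda$ and $C^R=\int f(1-\psi_R)\,\Pi$. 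For each fixed $R$ the product $f\psi_R$ lies in $L^1(0,\infty;\C_0(\R^N))$ because $\psi_R$ is compactly supported, so $A^R_\lambda\to 0$ as $\lambda\to\infty$ directly by Corollary~\ref{CORO21}. The tail term is estimated by $|C^R|\le\int_0^\infty\|f(t,\cdot)\|_\infty\int_{|x|>R}\Pi(t,\d x)\,\d t$, which tends to $0$ as $R\to\infty$ by dominated convergence, since $t\mapsto\|f(t,\cdot)\|_\infty$ lies in $L^1(0,\infty)$ and $\Pi(t,\cdot)$ is a probability measure.

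The genuinely delicate step is the uniform control of $B^R_\lambda$, namely showing $\lim_{R\to\infty}\limsup_{\lambda\to\infty}|B^R_\lambda|=0$. Since $|B^R_\lambda|\le\int_0^\infty\|f(t,\cdot)\|_\infty\int_{|x|>R}p^\lambda(t,\d x)\,\d t$, this is exactly the statement that no mass of $p^\lambda$ escapes to infinity, uniformly in large $\lambda$. To obtain it I would exploit that each $p^\lambda(t,\cdot)$ has total mass $1$, so that, writing $g(t)=\|f(t,\cdot)\|_\infty$,
\[
\int_0^\infty\! g(t)\!\int_{\R^N}(1-\psi_R)\,p^\lambda(t,\d x)\,\d t=\int_0^\infty\! g(t)\,\d t-\int_0^\infty\! g(t)\!\int_{\R^N}\psi_R\,p^\lambda(t,\d x)\,\d t.
\]
Passing to the limit $\lambda\to\infty$ in the last term via Corollary~\ref{CORO21} (legitimate since $g\psi_R\in L^1(0,\infty;\C_0)$) and then letting $R\to\infty$, monotone convergence together with the mass conservation $\int_{\R^N}\Pi(t,\d x)=1$ forces the right-hand side to vanish; this is the content of the tightness of $\{p^\lambda\}$ and of Lemma~\ref{LE7}. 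Combining the three estimates, taking $\limsup_{\lambda\to\infty}$ and then $R\to\infty$, yields $\lim_{\lambda\to\infty}\int f\,p^\lambda=\int f\,\Pi$, which is the assertion of the corollary. I expect the whole difficulty to be concentrated in this uniform tail bound: everything else is a routine truncation, and the crucial input is that the identified limit $\Pi$ retains the full unit mass of the approximating probability measures $p^\lambda$.
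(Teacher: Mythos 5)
Your proof is correct and follows essentially the same route as the paper: the paper observes that the identified limit $\Pi$ retains full unit mass, deduces that $\{p^\lambda\}_{\lambda>0}$ is a tight family of Young measures, and invokes the Tightness Lemma (Lemma \ref{LE7}) to upgrade the weak$*$ convergence of Corollary \ref{CORO21} from $L^1(0,\infty;\C_0(\R^N))$ to $L^1(0,\infty;\C_b(\R^N))$ integrands. Your cut-off computation --- using $p^\lambda(t,\R^N)=1$ together with Corollary \ref{CORO21} applied to $g\,\psi_R$ to bound the tail term uniformly in large $\lambda$, then sending $R\to\infty$ via $\Pi(t,\R^N)=1$ --- is precisely the standard proof of Lemma \ref{LE7} unfolded in this concrete setting, so the two arguments coincide in substance.
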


To conclude these remarks, one has obtained that proving Proposition \ref{PROP20} is
sufficient to complete the proof of Theorem \ref{THEO19}. Thus in the remaining of this section we shall focus on proving Proposition \ref{PROP20}.

In order to prove Proposition \ref{PROP20}, we shall first derive preliminary lemma.
In the sequel the notation $\mathcal S$ will be used to denote the Schwartz space, that is
the set of rapidly decreasing functions, while $\mathcal S'$ will be used to denote its dual
space, the space of tempered distributions.\\
Next, using these notations, our first lemma reads as follows.
\begin{lemma}\label{LE23}
Let $F\in \mathcal C(\R^+)\cap L^1(\R^+)$ be given. Then the following convergence holds true:
\begin{equation*}
\lim_{\lambda\to\infty}\lambda^2F(\lambda^2 t)\left(p^\lambda(t,\d x)\otimes \d t\right)=\left(\int_0^\infty F(s)\d s\right)\delta_{t=0}\otimes \delta_0\text{ in }\mathcal S'(\R^+\times\R^N),
\end{equation*}
that is for each test function $\phi\in \mathcal S(\R^+\times\R^N)$ one has
\begin{equation*}
\lim_{\lambda\to\infty}\lambda^2\int_{\R^+\times\R^N}F(\lambda^2 t)\phi(t,x)p^\lambda(t,\d x)\d t=\left(\int_0^\infty F(s)\d s\right)\phi(0,0).
\end{equation*}
\end{lemma}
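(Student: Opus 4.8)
The plan is to reduce the statement to an elementary concentration argument by undoing the parabolic scaling. The only structural facts I will use are that $\phi$ is bounded and continuous, that $\nu(s,\cdot)$ is a probability measure for every $s$, and that $F\in L^1(\R^+)$.

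First I would perform the change of variables $s=\lambda^2 t$ in the left-hand integral. This absorbs the prefactor $\lambda^2$ against the Jacobian $\d t=\lambda^{-2}\,\d s$ and yields
\begin{equation*}
\lambda^2\int_{\R^+\times\R^N}F(\lambda^2 t)\phi(t,x)p^\lambda(t,\d x)\,\d t=\int_0^\infty F(s)\left[\int_{\R^N}\phi\left(\frac{s}{\lambda^2},x\right)p^\lambda\left(\frac{s}{\lambda^2},\d x\right)\right]\d s.
\end{equation*}
Next I invoke the defining relation \eqref{def-p-lambda}: applying it with the test function $x\mapsto\phi(s/\lambda^2,x)$ and with $t=s/\lambda^2$ (so that $\lambda^2 t=s$) turns the inner integral into an integral against $\nu(s,\cdot)$,
\begin{equation*}
\int_{\R^N}\phi\left(\frac{s}{\lambda^2},x\right)p^\lambda\left(\frac{s}{\lambda^2},\d x\right)=\int_{\R^N}\phi\left(\frac{s}{\lambda^2},\frac{x}{\lambda}\right)\nu(s,\d x).
\end{equation*}
Thus the whole expression equals $\int_0^\infty F(s)\,g_\lambda(s)\,\d s$ with $g_\lambda(s):=\int_{\R^N}\phi(s/\lambda^2,x/\lambda)\,\nu(s,\d x)$.

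I would then pass to the limit by dominated convergence, applied twice. For fixed $s$, the integrand $\phi(s/\lambda^2,x/\lambda)$ converges pointwise in $x$ to $\phi(0,0)$ as $\lambda\to\infty$ by continuity of $\phi$ at the origin, and it is dominated by the constant $\|\phi\|_\infty$, which is integrable against the probability measure $\nu(s,\cdot)$; since $\nu(s,\cdot)$ has total mass one, this gives $g_\lambda(s)\to\phi(0,0)$ for every $s$. The continuity of $t\mapsto\nu(t,\cdot)$ established in Lemma \ref{LE17} guarantees that $s\mapsto g_\lambda(s)$ is measurable. For the outer integral, $|F(s)g_\lambda(s)|\leq\|\phi\|_\infty|F(s)|\in L^1(\R^+)$, so a second application of dominated convergence yields
\begin{equation*}
\lim_{\lambda\to\infty}\int_0^\infty F(s)g_\lambda(s)\,\d s=\phi(0,0)\int_0^\infty F(s)\,\d s,
\end{equation*}
which is exactly the claimed identity.

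The argument is essentially bookkeeping, and there is no serious obstacle: the crucial input is that $\nu(s,\cdot)$ is a probability measure, so that no mass escapes to infinity as the spatial variable is rescaled by $1/\lambda$ and the pointwise limit $\phi(x/\lambda)\to\phi(0)$ integrates correctly. In particular, and in contrast with the later arguments, neither the second-moment part of Assumption \ref{ASS1} nor any tightness estimate is needed here, since only the boundedness of $\phi$ is used to dominate. The one point requiring a little care is ensuring measurability of $g_\lambda$ and the legitimacy of the iterated passage to the limit, both of which follow from the continuity of $s\mapsto\nu(s,\cdot)$.
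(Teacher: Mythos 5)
Your proof is correct and follows essentially the same route as the paper's: undo the parabolic scaling by the substitution $s=\lambda^2 t$, rewrite $p^\lambda$ in terms of the unscaled measure via \eqref{def-p-lambda}, and conclude by dominated convergence using only $\|\phi\|_\infty$, the probability-measure property, and $F\in L^1(\R^+)$. The only cosmetic difference is that the paper unwinds one step further to $\mu$ (so the test point becomes $\phi(t/\lambda^2,(x+H(t))/\lambda)$ and a single application of dominated convergence on $\mu(t,\d x)\otimes\d t$ suffices), whereas you stop at $\nu$ and iterate the dominated convergence, which changes nothing of substance.
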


\begin{proof}
Let $\phi\equiv \phi(t,x)\in \mathcal S(\R^+\times\R^N)$ be given. Then from the definition of $p^\lambda$ one obtains:
\begin{equation*}
\begin{split}
\lambda^2\int_{\R^+\times\R^N}F(\lambda^2 t)\phi(t,x)&p^\lambda(t,\d x) \d t=\lambda^2\int_{\R^+\times\R^N}F(\lambda^2 t)\phi\left(t,\frac{x}{\lambda}\right)\nu(\lambda^2 t,\d x)\d t\\
=&\int_{\R^+\times\R^N}F(t)\phi\left(\frac{t}{\lambda^2},\frac{x+H(t)}{\lambda}\right)\mu(t,\d x)\d t\\
=&\int_{\R^+\times\R^N}F(t)\phi\left(\frac{t}{\lambda^2},\frac{x+H(t)}{\lambda}\right)\left(\mu(t,\d x)\otimes\d t\right).
\end{split}
\end{equation*}
Finally since function $(t, x) \mapsto F (t)$ belongs to $L^1\left(\R^+\times \R^N ; \mu(t,\d x)\otimes \d t\right)$, Lebesgue
convergence theorem applies and completes the proof of Lemma \ref{LE23}.
\end{proof}

Now equipped with this lemma we shall first prepare the equation
before passing to the limit $\lambda\to\infty$ and more precisely through the sequence $\lambda_n$
as $n\to\infty$.

\begin{lemma}[Preparation of the equation]\label{LE24}
Let $\phi\in W^{1,1}\left(0,\infty;\C_b(\R^N)\right)\cap L^1(0,\infty;\C_b^2(\R^N))$ be given. For all $\lambda>0$ the function $t\mapsto p^\lambda(t,\d x)$ satisfies
\begin{equation}\label{31}
\int_{\R^+\times\R^N}\partial_t\phi\d p^\lambda(t,\d x)\d t=-\int_{\R^N}\phi(0,x)p^\lambda(0,\d x)-\mathcal{T}_1^\lambda[\phi]-\mathcal T_2^\lambda[\phi]+\mathcal T_3^\lambda[\phi],
\end{equation}
wherein we have set
\begin{equation*}
\begin{split}
&\mathcal{T}_1^\lambda[\phi]=\lambda\int_{0}^\infty \int_{\R^N}\nabla \phi(t,x)H'(\lambda^2t)p^\lambda(t,\d x)\d t\\
&+\lambda \int_0^\infty \hspace{-0.2cm}\int_S \alpha(\lambda^2 t,\theta)\left[z+G(\lambda^2 t,\theta)\right]\left[\int_{\R^N}\nabla \phi(t,x)p^\lambda\left(t+\frac{\theta}{\lambda^2},\d x\right)\right]Q(\d\theta,\d z)\d t,
\end{split}
\end{equation*}
\begin{equation*}
\mathcal{T}_2^\lambda[\phi]=\int_0^\infty \int_{S}\int_{\R^N}\Phi^\lambda[\phi](t,\theta,x,z)p^\lambda \left(t+\frac{\theta}{\lambda^2},\d x\right)Q(\d\theta,\d z) \d t;
\end{equation*}
with $\Phi^\lambda$ defined by
\begin{align*}
&\Phi^\lambda [\phi](t,\theta,x,z)\\
&=\alpha(\lambda^2 t,\theta)\int_0^1 (1-s)D^2\phi\left(t,x+s\frac{z+G(\lambda^2t,\theta)}{\lambda}\right)\cdot [z+G(\lambda^2t,\theta)]^2\d s;
\end{align*}
and
\begin{align*}
&\mathcal{T}_3^\lambda[\phi]\\
&=\lambda^2\int_0^\infty \int_{S}\alpha(\lambda^2t,\theta)\int_{\R^N}\phi(t,x)\left[p^\lambda(t,\d x)-p^\lambda\left(t+\frac{\theta}{\lambda^2},\d x\right)\right] Q(\d \theta,\d z) \d t.
\end{align*}
\end{lemma}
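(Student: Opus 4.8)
The plan is to start from the weak identity \eqref{28} of Lemma \ref{LE18} and to extract the diffusive structure through a second-order Taylor expansion of the test function. First I would unfold the definition of the rescaled measure $Q^\lambda$ in the last term of \eqref{28}: by construction, testing $\phi(t,x+z)$ against $Q^\lambda(\lambda^2 t,\theta;\d\theta,\d z)$ amounts to replacing $z$ by $\frac{z+G(\lambda^2 t,\theta)}{\lambda}$ and integrating against $Q(\d\theta,\d z)$, so that this term becomes
\[
-\lambda^2\int_0^\infty\int_S\alpha(\lambda^2 t,\theta)\int_{\R^N}\phi\left(t,x+\frac{z+G(\lambda^2 t,\theta)}{\lambda}\right)p^\lambda\left(t+\frac{\theta}{\lambda^2},\d x\right)Q(\d\theta,\d z)\,\d t.
\]
At the same time I would move the transport contribution $\lambda H'(\lambda^2 t)\nabla\phi$ from the left-hand side of \eqref{28} to the right, so that the left-hand side reduces to $\int_{\R^+\times\R^N}\partial_t\phi\,p^\lambda(t,\d x)\d t$, as required in \eqref{31}.

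The key step is then the Taylor formula with integral remainder. Writing $h=h(t,\theta,z)=\frac{z+G(\lambda^2 t,\theta)}{\lambda}$, I would use
\[
\phi(t,x+h)=\phi(t,x)+\nabla\phi(t,x)\cdot h+\int_0^1(1-s)D^2\phi(t,x+sh)\cdot h^2\,\d s,
\]
where $h^2$ stands for the rank-one tensor $h\otimes h$ and $D^2\phi\cdot h^2=\sum_{i,j}\partial_{ij}\phi\,h_ih_j$. Substituting this into the rewritten third term splits it into three pieces, carrying the prefactors $\lambda^2$, $\lambda^2\cdot\lambda^{-1}=\lambda$ and $\lambda^2\cdot\lambda^{-2}=1$ respectively. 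The zeroth-order piece still carries $p^\lambda(t+\theta/\lambda^2,\d x)$, and combined with the second term of \eqref{28} (which carries $p^\lambda(t,\d x)$) it produces exactly the time-increment term $\mathcal T_3^\lambda[\phi]$. The first-order piece equals $-\lambda\int\int_S\alpha(\lambda^2 t,\theta)[z+G]\cdot\nabla\phi\,p^\lambda(t+\theta/\lambda^2)\,Q$, which together with the displaced transport term $-\lambda\int H'(\lambda^2 t)\nabla\phi\,p^\lambda(t)$ is precisely $-\mathcal T_1^\lambda[\phi]$. Finally, recognizing the definition of $\Phi^\lambda[\phi]$, the second-order remainder piece is exactly $-\mathcal T_2^\lambda[\phi]$. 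Collecting these identities yields \eqref{31}.

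The only genuinely delicate point is to justify that these manipulations — unfolding $Q^\lambda$, invoking Fubini, and writing the Taylor remainder under the integral sign — are legitimate. Here I would first record that $H'$ is bounded on $[0,\infty)$: indeed $\alpha$ is bounded and, since $Q$ is a probability measure, Assumption \ref{ASS1}$(ii)$ gives $|x|\in L^2(S;Q)\subset L^1(S;Q)$, so $H'(t)=-\frac{1}{1+\Gamma}\int_S\alpha(t,\theta)x\,Q(\d\theta,\d x)$ is bounded; consequently $G(t,\theta)=H(t)-H(t+\theta)$ is bounded uniformly in $(t,\theta)\in[0,\infty)\times[-1,0]$. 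Combined with $\phi\in L^1(0,\infty;\C_b^2(\R^N))$, this bounds $D^2\phi$ and the linear contributions, while the quadratic factor $|z+G(\lambda^2 t,\theta)|^2$ appearing in $\mathcal T_2^\lambda$ is controlled through the second-moment hypothesis $\int_S|z|^2\,Q(\d\theta,\d z)<\infty$ of Assumption \ref{ASS1}$(ii)$. This integrability is what makes $\mathcal T_1^\lambda,\mathcal T_2^\lambda,\mathcal T_3^\lambda$ well defined and permits the reordering of integrations. The main obstacle is therefore this bookkeeping of integrability rather than any deeper analytic difficulty, the remainder term $\mathcal T_2^\lambda$ being the one that genuinely requires the full strength of the square-integrability assumption (and explains why the extra regularity $\C_b^2$, rather than $\C_b^1$ as in Lemma \ref{LE18}, is imposed on $\phi$).
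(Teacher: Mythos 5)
Your proof is correct and follows exactly the route the paper intends: the paper's own ``proof'' is the single remark that the decomposition ``directly follows from the formula obtained in \eqref{28}'', and your argument -- unfolding $Q^\lambda$, moving the transport term $\lambda H'(\lambda^2 t)\nabla\phi$ to the right-hand side, applying the second-order Taylor formula with integral remainder to $\phi\left(t,x+\frac{z+G(\lambda^2 t,\theta)}{\lambda}\right)$, and regrouping the three resulting pieces into $-\mathcal T_1^\lambda$, $-\mathcal T_2^\lambda$ and $+\mathcal T_3^\lambda$ -- is precisely that computation, with the integrability bookkeeping (boundedness of $H'$ and $G$, second moments of $Q$) correctly supplied.
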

The above decomposition directly follows from the formula obtained in \eqref{28}.

In order to prove Proposition \ref{PROP20} we shall now study the convergence, as $n\to \infty$, of the different terms arising in the decomposition described in the above lemma with $\lambda=\lambda_n$. Before doing so, let us
recall that, due to Assumption \ref{ASS1}, one has
\begin{equation*}
\alpha(t,\theta)=\alpha_\infty(\theta)+O\left(e^{-\beta t}\right)\text{ uniformly for $\theta\in [-1,0]$ as $t>\!\!>1$}.
\end{equation*}
As a consequence, recalling the definition of $H=H(t)$ in \eqref{25}, one obtains 
\begin{equation}\label{asympt}
\begin{split}
&H(t)=-{\bf K}t+O(1)\text{ and }H'(t)=-{\bf K}+O(e^{-\beta t})\text{ as $t\to\infty$},\\
&G(t,\theta)=\theta {\bf K}+O(e^{-\beta t})\text{ uniformly for $\theta\in [-1,0]$ as $t\to\infty$}.
\end{split}
\end{equation}
These asymptotic expansions will be extensively used in the sequel.

We are now able to investigate the asymptotic behaviour as $\lambda\to\infty$ of each term arising in the decomposition provided by Lemma \ref{LE24}.
To that aim let us fix a test function $\phi\in \mathcal S(\R^+\times\R^N)$. In the sequel of this proof we shall omit to explicitly write down the dependence with respect to $\phi$ in the decomposition of Lemma \ref{LE24}, that is for $i=1,..,3$ we shall write $\mathcal T_i^\lambda$ instead of $\mathcal T_i^\lambda[\phi]$.\\
Our convergence proof will be achieved in the next four steps.\\
\noindent{\bf Step 0:} Recalling \eqref{30} one first obtains that
\begin{equation*}
\lim_{n\to\infty}\int_{\R^+\times\R^N}\partial_t\phi p^{\lambda_n}(t,\d x)\d t=\int_{\R^+\times\R^N}\partial_t\phi P(t,\d x)\d t.
\end{equation*}
Next note that for each $\lambda>0$ one has
$$
\int_{\R^N}\phi(0,x)p^\lambda(0,\d x)=\int_{\R^N}\phi\left(0,\frac{x}{\lambda}\right)\mu(0,\d x).
$$
Hence as $\lambda\to \infty$ Lebesgue convergence theorem ensures that
$$
\lim_{\lambda\to\infty}\int_{\R^N}\phi(0,x)p^\lambda(0,\d x)=\phi\left(0,0\right).
$$
\noindent{\bf Step 1:} In this step we investigate the behaviour of $\mathcal T_1^\lambda$ as $\lambda\to\infty$. To that aim note that one has
\begin{equation*}
\begin{split}
&\lambda\int_{S}\int_{0}^\infty \alpha(\lambda^2t,\theta)\left[z+G(\lambda^2t,\theta)\right]\cdot \left\{\int_{\R^N}\nabla\phi(t,x)p^\lambda\left(t+\frac{\theta}{\lambda^2},\d x\right)\right\}\d t Q(\d\theta,\d z)\\
&=\lambda\int_{S}\int_{0}^\infty \hspace{-0.2cm}\alpha(\lambda^2t-\theta,\theta)\left[z+G(\lambda^2t-\theta,\theta)\right]\cdot \left\{\int_{\R^N}\hspace{-0.3cm}\nabla\phi(t,x)p^\lambda(t,\d x)\right\}\d t Q(\d\theta,\d z)\\
&~+O\left(\frac{1}{\lambda}\right).
\end{split}
\end{equation*}
Here the remaining term $O\left(\frac{1}{\lambda}\right)$ depends upon $\|\partial_t\nabla\phi\|_{L^1(0,\infty;W^{1,\infty}(\R^N))}$. As a consequence one obtains that
$$
\mathcal T_1^\lambda=\lambda\int_{\R^+\times\R^N}\mathcal J_1(\lambda^2t)\nabla\phi(t,x)p^\lambda(t,\d x)\d t+O\left(\frac{1}{\lambda}\right),
$$
wherein the function $\mathcal J_1:\R^+\to\R^N$ is defined by
$$
\mathcal J_1(t)=H'(t)+\int_S \alpha(t-\theta,\theta)\left[z+G(t-\theta,\theta)\right] Q(\d\theta,\d z).
$$
Next note that \eqref{asympt} yields
$$
\mathcal J_1(t)=O\left(e^{-\beta t}\right)\text{ as }t\to\infty.
$$
Hence $\mathcal J_1\in L^1(\R^+)$ and Lemma \ref{LE23} applies and ensures that $\mathcal T_1^\lambda=O\left(\frac{1}{\lambda}\right)$, so that
$$
\lim_{\lambda\to\infty}\mathcal T_1^\lambda=0.
$$
\noindent{\bf Step 2:} We are now looking at the second term, namely $\mathcal T_2^\lambda$. During this step we write $\Phi^\lambda$ instead of $\Phi^\lambda[\phi]$. Next let us first notice that one has 
$$
\mathcal T_2^\lambda=\int_{S}\left(-\int_{\frac{\theta}{\lambda^2}}^0+\int_0^\infty\right)\int_{\R^N}\Phi^\lambda\left(t-\frac{\theta}{\lambda^2},\theta,x,z\right)p^\lambda(t,\d x)\d t Q(\d\theta,\d z).
$$
Therefore this yields
$$
\mathcal T_2^\lambda=\int_{S}\int_0^\infty\int_{\R^N}\Phi^\lambda\left(t-\frac{\theta}{\lambda^2},\theta,x,z\right)p^\lambda(t,\d x)\d t Q(\d\theta,\d z)+O\left(\frac{1}{\lambda^2}\right).
$$
Now we claim that:
\begin{claim}\label{Claim26}
The following convergence holds true:
\begin{equation*}
\lim_{\lambda\to\infty} \int_S\Phi^\lambda\left(t-\frac{\theta}{\lambda^2},\theta,x,z\right)Q(\d\theta,\d z)={\frac12}{\rm div}\,\left(D_0\nabla\phi\right),
\end{equation*}
in $L^1\left(0,\infty;\C_0(\R^N)\right)$.
Here the matrix $D_0$ is defined by
\begin{equation}\label{def-D0}
D_0=\int_S \alpha_\infty(\theta)\left[z+\theta {\bf K}\right]\left[z+\theta {\bf K}\right]^T Q(\d\theta,\d z).
\end{equation}
\end{claim}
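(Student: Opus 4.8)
The plan is to treat $\Phi^\lambda$ as the second order Taylor remainder that it manifestly is, identify the pointwise limit of its $Q$-average through the asymptotics \eqref{asympt}, and then upgrade the mode of convergence in two stages: first to uniform convergence in $x$ (the $\mathcal C_0$ norm) at each fixed time $t>0$, and then to $L^1$ in time by dominated convergence. Throughout I abbreviate $G=G(\lambda^2t-\theta,\theta)$ and write $F_\lambda(t)(x)$ for the integral on the left-hand side of the claim.

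First I would record the pointwise limit. Fix $t>0$ and $x\in\R^N$; as $\lambda\to\infty$ we have $\lambda^2 t\to\infty$, so Assumption \ref{ASS1}(i) gives $\alpha(\lambda^2 t-\theta,\theta)\to\alpha_\infty(\theta)$ and \eqref{asympt} gives $G\to\theta{\bf K}$, both uniformly in $\theta\in[-1,0]$; moreover $t-\tfrac{\theta}{\lambda^2}\to t$ and $\tfrac{z+G}{\lambda}\to 0$ for each fixed $(\theta,z)$. Since $\phi\in\mathcal S$, $D^2\phi$ is continuous and bounded, so the integrand of $\Phi^\lambda(t-\tfrac{\theta}{\lambda^2},\theta,x,z)$ converges to $\alpha_\infty(\theta)\,(1-s)\,D^2\phi(t,x)\cdot[z+\theta{\bf K}]^2$, where $w^\top D^2\phi\,w=D^2\phi\cdot[w]^2$. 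Using $\int_0^1(1-s)\,\d s=\tfrac12$ and integrating against $Q$, the candidate limit is $\tfrac12\int_S\alpha_\infty(\theta)\,D^2\phi(t,x)\cdot[z+\theta{\bf K}]^2\,Q(\d\theta,\d z)=\tfrac12\,\mathrm{Tr}\!\left(D_0\,D^2\phi(t,x)\right)=\tfrac12\,\mathrm{div}(D_0\nabla\phi)(t,x)$, the last identity holding because $D_0$ is a constant symmetric matrix defined exactly by \eqref{def-D0}.

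Second, I would promote this to convergence in $\mathcal C_0(\R^N)$ for each fixed $t>0$, that is uniformly in $x$. The only genuine difficulty is that the Hessian's argument is shifted by $s\tfrac{z+G}{\lambda}$, which is small for fixed $z$ but not uniformly small in $z$. I would therefore split $S=S_R\cup S_R^c$ with $S_R=\{(\theta,z):|z|\le R\}$. On $S_R^c$ both the integrand and its candidate limit are dominated by $C(1+|z|^2)$ — using that $\alpha$ and $D^2\phi$ are bounded and that $G$ is bounded, the latter because $H'$ is bounded under Assumption \ref{ASS1} so that $|G|\le\|H'\|_\infty|\theta|$ — hence Assumption \ref{ASS1}(ii) makes the $S_R^c$ contribution smaller than any $\varepsilon$ for $R$ large, uniformly in $\lambda$, $t$ and $x$. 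On $S_R$ the variable $z$ is bounded, so the $t$-shift $|\theta|/\lambda^2$ and the $x$-shift $s|z+G|/\lambda\le (R+\|H'\|_\infty)/\lambda$ tend to $0$ uniformly; since $D^2\phi$ is uniformly continuous, the difference between $D^2\phi(t-\tfrac{\theta}{\lambda^2},x+s\tfrac{z+G}{\lambda})$ and $D^2\phi(t,x)$ tends to $0$ uniformly in $x$ and in $(\theta,z)\in S_R$. Combined with the uniform convergences of $\alpha$ and $G$, this yields $\sup_x|F_\lambda(t)(x)-\tfrac12\mathrm{div}(D_0\nabla\phi)(t,x)|\to0$ for every $t>0$.

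Finally I would pass to $L^1$ in time. Setting $\bar\omega(t)=\sup_{\tau\in[t,t+1]}\sup_x\|D^2\phi(\tau,x)\|$, the Schwartz decay of $\phi$ gives $\bar\omega\in L^1(0,\infty)$; since $t-\tfrac{\theta}{\lambda^2}\in[t,t+1]$ for $\lambda\ge1$ and translation in $x$ leaves the supremum over $x$ unchanged, I obtain the $\lambda$-independent bound $\|F_\lambda(t)\|_{\mathcal C_0}\le\tfrac12\|\alpha\|_\infty\,\bar\omega(t)\int_S(|z|+\|H'\|_\infty)^2\,Q(\d\theta,\d z)=:g(t)$, with $g\in L^1(0,\infty)$ by Assumption \ref{ASS1}(ii). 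As the limit $\tfrac12\mathrm{div}(D_0\nabla\phi)$ is itself in $L^1(0,\infty;\mathcal C_0(\R^N))$, the dominated convergence theorem applied to $t\mapsto\|F_\lambda(t)-\tfrac12\mathrm{div}(D_0\nabla\phi)(t,\cdot)\|_{\mathcal C_0}$ upgrades the pointwise-in-$t$ convergence of the previous step to convergence in $L^1(0,\infty;\mathcal C_0(\R^N))$, which is the claim. The main obstacle is precisely this uniform-in-$x$ step: the Hessian is evaluated at an argument shifted by an amount that is controlled only for bounded $z$, and it is the finite second moment of $Q$ that allows the tail in $z$ to be discarded so that uniform continuity can be exploited on the remaining bounded range.
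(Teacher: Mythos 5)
Your proof is correct and follows essentially the same route as the paper: both arguments split the $z$-integration into a bounded region, handled by the smoothness of $\phi$ together with the asymptotics of $\alpha$ and $G$ from \eqref{asympt}, and a tail region controlled by the second-moment hypothesis of Assumption \ref{ASS1} $(ii)$, and both conclude in $L^1$ in time using the Schwartz decay of $\phi$ and the Lebesgue dominated convergence theorem. The only cosmetic difference is that you truncate at a fixed radius $R$ and invoke uniform continuity of $D^2\phi$, whereas the paper truncates at the $\lambda$-dependent level $|z|\leq\lambda^{1/4}$ and uses a Lipschitz bound on $D^2\phi$, which additionally yields an explicit rate of order $\lambda^{-1/4}$ plus a tail term.
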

The proof of this claim is postponed and we first complete the convergence of
$\mathcal T_2^{\lambda_n}$. Here recall that $\{\lambda_n\}_{n\geq 0}$ is the sequence
chosen at the beginning of this proof (see \eqref{29} and \eqref{30}). Now because of the
Young convergence property $p^{\lambda_n}(t,\d x)\to P(t,\d x)$ we obtain that
\begin{equation*}
\begin{split}
&\lim_{n\to\infty}\int_0^\infty\int_{\R^N}\int_S \Phi^{\lambda_n}\left(t-\frac{\theta}{\lambda_n^2},\theta,x,z\right)Q(\d\theta,\d z)p^{\lambda_n}(t,\d x)\d t\\
&={\frac12}\int_{\R^+\times\R^N}{\rm div}\,\left(D_0\nabla\phi\right)P(t,\d x)\d t.
\end{split}
\end{equation*}
This re-writes as 
\begin{equation*}
\lim_{n\to\infty}\mathcal T_2^{\lambda_n}={\frac12}\int_{\R^+\times\R^N}{\rm div}\,\left(D_0\nabla\phi\right)P(t,\d x),
\end{equation*}
that completes the proof of this step.\\
It remains to prove Claim \ref{Claim26}.
To do so, let us observe that there exists some constant $C>0$ such that for all $(t,x)\in \R^+\times \R^N$, $(\theta,z)\in S$, $s\in [0,1]$ and $\lambda\geq 1$ one has, setting $Z=z+G(\lambda^2t-\theta,\theta)$, 
\begin{equation*}
\begin{split}
&\int_S\left|\left[D^2\phi\left(t-\frac{\theta}{\lambda^2}, x+s\frac{Z}{\lambda}\right)-D^2\phi(t,x)\right]\cdot Z^2\right|Q(\d\theta,\d z)\\
&\leq \frac{C}{\lambda}\int_{[-1,0]\times \{|z|\leq \lambda^{1/4}\}}\hspace{-0.5cm}[|z|^3+1]Q(\d\theta,\d z)+C\int_{[-1,0]\times \{|z|\geq \lambda^{1/4}\}}\hspace{-0.5cm}[|z|^2+1]Q(\d\theta,\d z)\\
&\leq \frac{C(1+\lambda^{3/4})}{\lambda}+C\int_{[-1,0]\times \{|z|\geq \lambda^{1/4}\}}[|z|^2+1]Q(\d\theta,\d z)
\end{split}
\end{equation*}
This upper bound converges to zero as $\lambda\to\infty$ so that
\begin{equation*}
\lim_{\lambda\to \infty}\int_S\left|\left[D^2\phi\left(t-\frac{\theta}{\lambda^2}, x+s\frac{Z}{\lambda}\right)-D^2\phi(t,x)\right]\cdot Z^2\right|Q(\d\theta,\d z)=0,
\end{equation*}
uniformly for $(t,x)\in \R^+\times\R^N$ and also in $L^1(0,\infty;\C_b(\R^N))$ since $\phi\in \mathcal S$ and using Lebesgue convergence theorem. 
To complete the proof of the claim, it is sufficient to show that, for the topology of $L^1(0,\infty;\C_b(\R^N))$, one has
\begin{equation*}
\lim_{\lambda\to \infty} \int_ S D^2\phi(t,x)\cdot Z^2 Q(\theta,\d z)=\int_S D^2\phi(t,x)\cdot \left[z+\theta {\bf K}\right]^2Q(\d\theta,\d z).
\end{equation*}
This latter convergence directly follows from the asymptotic expansion of $G(t,\theta)$ in \eqref{asympt}. This complete the proof of Claim \ref{Claim26} by noticing that
\begin{equation*}
\int_S D^2\phi(t,x)\cdot \left[z+\theta {\bf K}\right]^2Q(\d\theta,\d z)={\rm div}\left( D_0\nabla \phi\right),
\end{equation*}
where the matrix $D_0$ is defined in \eqref{def-D0}. This completes the proof of the claim.

\vspace{1ex}

\noindent{\bf Step 3:} In this last step we investigate the limit, as $\lambda\to\infty$, of $\mathcal T^\lambda_3$.
In order to deal with this term, note that
\begin{equation*}
\begin{split}
\mathcal T^\lambda_3=& \lambda^2\int_0^\infty \int_S \alpha(\lambda^2 t,\theta)\int_{\R^N} \phi(t,x)p^\lambda(t,\d x)Q(\d\theta,\d z) \d t\\
&- \lambda^2 \int_S \int_{\theta/\lambda^2}^0 \alpha(\lambda^2 t-\theta)\int_{\R^N} \phi\left(t-\frac{\theta}{\lambda^2},x\right)p^\lambda(t,\d x) \d t Q(\d\theta,\ dz)\\
&-\lambda^2\int_0^\infty \int_S \alpha(\lambda^2 t-\theta,\theta)\int_{\R^N} \phi\left(t-\frac{\theta}{\lambda^2},x\right)p^\lambda(t,\d x)Q(\d\theta,\d z) \d t.
\end{split}
\end{equation*}
For the last term, let us observe that one has
\begin{equation*}
\begin{split}
&\int_0^\infty \int_S \alpha(\lambda^2 t-\theta,\theta)\int_{\R^N} \phi\left(t-\frac{\theta}{\lambda^2},x\right)p^\lambda(t,\d x)Q(\d\theta,\d z) \d t\\
=&\int_0^\infty \int_S \alpha(\lambda^2 t-\theta,\theta)\int_{\R^N} \phi\left(t,x\right)p^\lambda(t,\d x)Q(\d \theta,\d z) \d t\\
&-\int_0^\infty \int_S \frac{\theta}{\lambda^2}\alpha(\lambda^2 t-\theta,\theta)\int_{\R^N} \partial_t \phi\left(t,x\right)p^\lambda(t,\d x)Q(\d \theta,\d z) \d t+O\left(\frac{1}{\lambda^4}\right).
\end{split}
\end{equation*}
This allows us to re-write $\mathcal T^\lambda_3$ as follows:
\begin{equation*}
\begin{split}
\mathcal T^\lambda_3=&\lambda^2 \int_0^\infty\int_{\R^N}F(\lambda^2 t) p^\lambda(t,\d x)\d t\\
&- \lambda^2 \int_S \int_{\theta/\lambda^2}^0 \alpha(\lambda^2 t-\theta)\int_{\R^N} \phi\left(t-\frac{\theta}{\lambda^2},x\right)p^\lambda(t,\d x) \d t Q(\d \theta,\ dz)\\
&+\int_0^\infty \int_S \theta\alpha(\lambda^2 t-\theta,\theta)\int_{\R^N} \partial_t \phi\left(t,x\right)p^\lambda(t,\d x)Q(\d \theta,\d z) \d t+O\left(\frac{1}{\lambda^4}\right).
\end{split}
\end{equation*}
In the above decomposition we have set
\begin{equation*}
F(t)=\int_S \left[\alpha( t,\theta)-\alpha(t-\theta,\theta)\right]Q(\d\theta,\d z).
\end{equation*}
Now, recalling \eqref{asympt}, one has $F(t)=O(e^{-\beta t})$ as $t\to\infty$ so that $F\in L^1(\R^+)$ and Lemma \ref{LE23} applies and ensures that
\begin{equation*}
\lim_{\lambda\to\infty}\lambda^2 \int_0^\infty\int_{\R^N}F(\lambda^2 t) p^\lambda(t,\d x)\d t=\phi(0,0)\int_0^\infty F(s)\d s.
\end{equation*}
Next, using the same argument as for the proof of Lemma \ref{LE23}, one has
\begin{equation*}
\begin{split}
\lim_{\lambda\to\infty}&\lambda^2 \int_S \int_{\theta/\lambda^2}^0 \alpha(\lambda^2 t-\theta)\int_{\R^N} \phi\left(t-\frac{\theta}{\lambda^2},x\right)p^\lambda(t,\d x) \d t Q(\d \theta,\d z)\\
&=\phi(0,0)\int_S \int_{\theta}^0 \alpha(t-\theta)\d t Q(\d \theta,\ dz).
\end{split}
\end{equation*}
And, for the last term we get, along the sequence $\{\lambda_n\}_{n\geq 0}$ (see \eqref{30}),
\begin{equation*}
\begin{split}
\lim_{n\to\infty}&\int_0^\infty \int_S \theta\alpha(\lambda_n^2 t-\theta,\theta)\int_{\R^N} \partial_t \phi\left(t,x\right)p^{\lambda_n}(t,\d x)Q(\d \theta,\d z) \d t\\
&=\int_0^\infty \int_S \theta\alpha_\infty(\theta)Q(\d \theta,\d z)\int_0^\infty\int_{\R^N} \partial_t \phi\left(t,x\right)P(t,\d x)\d t\\
&=-\Gamma \int_0^\infty\int_{\R^N} \partial_t \phi\left(t,x\right)P(t,\d x)\d t.
\end{split}
\end{equation*}
To summarize we have obtained the following property:
\begin{equation*}
\begin{split}
\lim_{n\to\infty}\mathcal T_3^{\lambda_n}=&\phi(0,0)\int_0^\infty F(s)\d s-\phi(0,0)\int_S \int_{\theta}^0 \alpha(t-\theta)\d t Q(\d \theta,\ dz)\\
&-\Gamma \int_0^\infty\int_{\R^N} \partial_t \phi\left(t,x\right)P(t,\d x)\d t.
\end{split}
\end{equation*}
And, to complete this step, note that
\begin{equation*}
\begin{split}
&\int_0^\infty F(s)\d s-\int_S \int_{\theta}^0 \alpha(t-\theta)\d t Q(\d \theta,\ dz)\\
=&\lim_{M\to\infty}\left[\int_S \int_0^M \left[\alpha( t,\theta)-\alpha(t-\theta,\theta)\right]Q(\d\theta,\d z)\int_{\theta}^0 \alpha(t-\theta)\right]\d t\\
=&-\lim_{M\to \infty}\int_S \int_M^{M-\theta}\alpha(t,\theta)\d t Q(\d \theta,\ dz)=\int_S \theta \alpha_\infty(\theta)Q(\d \theta,\ dz)=-\Gamma.
\end{split}
\end{equation*}
Hence we get
\begin{equation*}
\lim_{n\to\infty}\mathcal T_3^{\lambda_n}=-\Gamma \phi(0,0)-\Gamma \int_0^\infty\int_{\R^N} \partial_t \phi\left(t,x\right)P(t,\d x)\d t.
\end{equation*}

We now conclude the proof of Proposition \ref{PROP20} and thus the one of Theorem \ref{THEO19}.\\
\noindent{\bf Conclusion of the proof of Proposition \ref{PROP20}:}\\
From the four previous steps we have obtained the following convergence: for all $\phi\in \mathcal S(\R^+\times\R^N)$, on the one hand, one has
\begin{equation*}
\lim_{n\to \infty}\int_{\R^+\times \R^N}\partial_t \phi(t,x) p^{\lambda_n}(t,\d x)\d t=\int_{\R^+\times\R^N}\partial_t \phi(t,x)P(t,\d x)\d t,
\end{equation*}
and, on the other hand, one has 
\begin{equation*}
\begin{split}
&\lim_{n\to \infty}\int_{\R^+\times \R^N}\partial_t \phi(t,x) p^{\lambda_n}(t,\d x)\d t=-(1+\Gamma)\phi(0,0)\\
&-{\frac12}\int_{\R^+\times\R^N}{\rm div}\,\left(D_0\nabla\phi\right)P(t,\d x)-\Gamma \int_{\R^+\times \R^N} \partial_t \phi\left(t,x\right)P(t,\d x)\d t.
\end{split}
\end{equation*}
Hence $P\in \mathcal S'(\R^+\times\R^N)$ is a solution of the equation
\begin{equation*}
\partial_t \Pi=\delta_{t=0}\otimes \delta_0+{\frac12}{\rm div}\left(\frac{D_0}{1+\Gamma}\nabla \Pi\right)\text{ in }\mathcal S'(\R^+\times\R^N).
\end{equation*}
The latter equation has a unique tempered distribution, $\Pi$, that satisfies $\Pi\in \mathcal C\left(\R^+,\mathcal P\right)$.
Finally, since the limit function $\Pi$ is unique, one obtains $P(t,\d x)=\Pi(t,\d x)$ and this complete the proof of Proposition \ref{PROP20} and thus the one of Theorem \ref{THEO19} since the sequence $(\lambda_n)$ is arbitrary. 

\section{Proof of Theorem \ref{THEO2}}\label{sec:proofTHEO2}

In this section, we complete the proof of Theorem \ref{THEO2}. To that aim we will prove that the family $\{p^\lambda(t,\d x)\}_{\lambda>0}$ is relatively compact with respect to a stronger topology (in time) than those of the Young measures. We shall more specifically show that this family is relatively compact for the topology of $\mathcal C_{\rm loc}\left((0,\infty);H_{\rm loc}^{-\sigma}(\R^N)\right)$ for some parameter $\sigma>0$ large enough. And, this compactness property will be sufficient to complete the proof of the Theorem \ref{THEO2} by using the identification of the weak limit as described in Theorem \ref{THEO19}.  

The main result of this section reads as follows.
\begin{theorem}\label{THEO-compact}
Let $R>0$, $0<\varepsilon<T$ be given. Let $\sigma>\frac{N}{2}+2$ be given. Then there exists $\tilde \lambda>0$ such that the family $\{p^\lambda(t,\d x)\}_{\lambda>\tilde \lambda}$ is relatively compact in $\mathcal C\left([\varepsilon,T];H^{-\sigma-1}(B_R)\right)$. Herein $B_R\subset \R^N$ denotes the ball of radius $R$ centred at the origin while for all $s>\frac{N}{2}+1$, $H^{-s}(B_R)=\left(H_0^s (B_R)\right)'$ denotes the dual space of the Hilbert space $H_0^s(B_R)$.
\end{theorem}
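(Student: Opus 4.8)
The plan is to establish the relative compactness through the vector-valued Arzelà--Ascoli theorem applied in the Banach space $\mathcal C\left([\varepsilon,T];H^{-\sigma-1}(B_R)\right)$. It suffices to prove two facts, uniformly for $\lambda>\tilde\lambda$ with $\tilde\lambda$ large enough: first, that for each fixed $t\in[\varepsilon,T]$ the set $\{p^\lambda(t,\d x)\}_{\lambda>\tilde\lambda}$ is relatively compact in $H^{-\sigma-1}(B_R)$; and second, that the family $\{t\mapsto p^\lambda(t,\d x)\}_{\lambda>\tilde\lambda}$ is equicontinuous from $[\varepsilon,T]$ into $H^{-\sigma-1}(B_R)$. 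Throughout I use the Sobolev embeddings $H_0^{\sigma}(B_R)\hookrightarrow \mathcal C(\overline{B_R})$ and $H_0^{\sigma+1}(B_R)\hookrightarrow \mathcal C^2(\overline{B_R})$, both valid since $\sigma>\tfrac N2+2$, together with the fact that $H^{-\sigma}(B_R)\hookrightarrow H^{-\sigma-1}(B_R)$ is compact, being the adjoint of the compact (Rellich) embedding $H_0^{\sigma+1}(B_R)\hookrightarrow H_0^{\sigma}(B_R)$.

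\medskip
\noindent\textbf{Pointwise compactness.} For every $\psi\in H_0^\sigma(B_R)$, extended by zero to $\R^N$, and every $t\geq0$ one has
\[
\left|\int_{B_R}\psi(x)\,p^\lambda(t,\d x)\right|\le \|\psi\|_{\mathcal C(\overline{B_R})}\le C\,\|\psi\|_{H_0^\sigma(B_R)},
\]
since $p^\lambda(t,\cdot)$ is a probability measure. Hence $\{p^\lambda(t,\cdot)\}_{\lambda>0}$ is bounded in $H^{-\sigma}(B_R)$, and the compact embedding $H^{-\sigma}(B_R)\hookrightarrow H^{-\sigma-1}(B_R)$ yields the desired pointwise relative compactness.

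\medskip
\noindent\textbf{Equicontinuity in time.} This is the main point. Fix $\psi\in H_0^{\sigma+1}(B_R)$. Taking in Lemma \ref{LE18} test functions of the form $\phi(t,x)=\chi(t)\psi(x)$ and reorganising via the decomposition of Lemma \ref{LE24}, I would show that $t\mapsto\int_{\R^N}\psi\,p^\lambda(t,\d x)$ is, for $\lambda>\tilde\lambda$, Lipschitz on $[\varepsilon,T]$ with constant controlled by $\|\psi\|_{\mathcal C^2(\overline{B_R})}$ alone. The delicate issue is that the right-hand side of \eqref{28} carries the singular prefactors $\lambda$ and $\lambda^2$; the argument consists in showing that the computations of Steps~1--3 of Proposition \ref{PROP20} in fact yield \emph{uniform} bounds and not merely limits. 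More precisely: the $O(\lambda)$ transport contribution combines with the first-order part of the jump term into $\lambda\,\mathcal J_1(\lambda^2 t)$, and since $\mathcal J_1(s)=O(e^{-\beta s})$ by \eqref{asympt}, the restriction $t\ge\varepsilon$ turns this into $\lambda\,O(e^{-\beta\lambda^2\varepsilon})$, which tends to $0$ and is in particular bounded. The $O(\lambda^2)$ second-order part converges, exactly as in Claim \ref{Claim26}, towards $\tfrac12\,{\rm div}(D_0\nabla\psi)$, bounded by $C\|\psi\|_{\mathcal C^2(\overline{B_R})}$ thanks to the second-moment Assumption \ref{ASS1}$(ii)$. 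Finally, the time delay pairing $p^\lambda(t+\theta/\lambda^2)$ against $p^\lambda(t)$ produces, after a first-order Taylor expansion in time, a term proportional to $\Gamma\,\tfrac{\d}{\d t}\int_{\R^N}\psi\,p^\lambda(t,\d x)$; transferring it to the left-hand side and dividing by $1+\Gamma>0$ (recall \eqref{def-K-Gamma}) isolates the derivative and closes the estimate. Note that for $\lambda$ large and $t\ge\varepsilon$ the shifted times satisfy $t+\theta/\lambda^2\ge\varepsilon-\lambda^{-2}>0$, which is precisely where the threshold $\tilde\lambda$ is used.

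\medskip
Collecting these bounds gives $\left|\int_{\R^N}\psi\,(p^\lambda(t,\d x)-p^\lambda(s,\d x))\right|\le C\,|t-s|\,\|\psi\|_{H_0^{\sigma+1}(B_R)}$ for all $s,t\in[\varepsilon,T]$, $\lambda>\tilde\lambda$ and $\psi\in H_0^{\sigma+1}(B_R)$, where I invoked $H_0^{\sigma+1}(B_R)\hookrightarrow\mathcal C^2(\overline{B_R})$. Taking the supremum over $\|\psi\|_{H_0^{\sigma+1}(B_R)}\le1$ shows that $t\mapsto p^\lambda(t,\cdot)$ is uniformly Lipschitz into $H^{-\sigma-1}(B_R)$, hence equicontinuous, and Arzelà--Ascoli concludes. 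The main obstacle is exactly this equicontinuity step: one must exhibit the cancellations that tame the $\lambda$- and $\lambda^2$-singular terms uniformly in $\lambda$, the two decisive ingredients being the exponential decay of Assumption \ref{ASS1}$(i)$ (made effective by $t\ge\varepsilon>0$) and the absorption of the delay-generated term using $1+\Gamma>0$.
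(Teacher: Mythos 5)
Your overall architecture (uniform $H^{-\sigma}$ bound plus compact embedding for pointwise compactness, then equicontinuity in time) is sound, and the pointwise half is correct and essentially identical to the paper's use of the compact embedding $H^{-\sigma}\hookrightarrow H^{-\sigma-1}(B_R)$. The gap is in the equicontinuity step, and it sits exactly where you located it: the treatment of the delay term $\mathcal T_3^\lambda$. Your ``first-order Taylor expansion in time'' is applied to $p^\lambda(t+\theta/\lambda^2)$, i.e.\ to the measure itself; this is circular, since it presupposes a uniform-in-$\lambda$ bound on $\partial_t p^\lambda$, which is precisely what equicontinuity asserts (the crude bound coming from the equation is only $O(\lambda^2)$). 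The legitimate version of the manoeuvre --- change variables $t\mapsto t-\theta/\lambda^2$ and Taylor-expand the \emph{test function}, as in Step 3 of the proof of Proposition \ref{PROP20} --- does produce the term $-\Gamma\int\partial_t\phi\,p^\lambda$ that you move to the left-hand side and divide by $1+\Gamma$, but it leaves an error of size $C\lambda^{-2}\|\partial_t^2\phi\|_{L^1}$. With $\phi=\chi(t)\psi(x)$ this only bounds $(1+\Gamma)\int_0^\infty\chi'(t)\bigl[\int_{\R^N}\psi\,p^\lambda(t,\d x)\bigr]\d t$ for \emph{smooth} $\chi$; to reach your pointwise Lipschitz estimate you must let $\chi$ approximate $\mathbf{1}_{[s,t]}$ through mollifications of width $\delta$, and then $\|\chi''\|_{L^1}\sim\delta^{-1}$, so the error term $\lambda^{-2}\delta^{-1}$ does not vanish uniformly: balancing $\delta$ against $\lambda^{-2}$ leaves an $O(1)$ defect, and no iteration removes it (a function can oscillate with $O(1)$ amplitude at frequency $\lambda^2$ while all the averaged quantities you control stay bounded). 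In other words, what your computation controls is $T_{1/\lambda^2}\partial_t$ tested against $\phi$ --- a weighted difference quotient at scale $\lambda^{-2}$ --- and passing from that to genuine time regularity of $p^\lambda$ requires inverting $T_{1/\lambda^2}$ uniformly in $\lambda$, which Taylor expansion cannot do.

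That inversion is precisely what the paper's proof supplies, and it is why its route looks so different. The delay term is kept intact inside the nonlocal operator $T_{1/\lambda^2}$ of \eqref{310}; Lemma \ref{LE-repre} and Claim \ref{claim-L1} show that $T_h$ is invertible on $L^1$ with inverse $\psi\mapsto\psi+K_h\ast\psi$, where the kernel $K_h$ decays exponentially uniformly in $h$ --- this rests on the complex-analytic study of the characteristic function $\Delta$ in Lemma \ref{LE-carac}, and your factor $1+\Gamma$ actually enters there as $\Delta'(0)$, ensuring that $0$ is a simple root. Feeding $\phi_h=T_h^{-1}\psi$ into the estimates yields only uniform \emph{fractional} Sobolev bounds $[p^\lambda]_{W^{1-s,p'}(0,T;H^{-\sigma})}\le C$ with $s\in[\tfrac12,1)$ (Theorem \ref{THEO-reg}, Lemma \ref{LE-sob}), i.e.\ time regularity of order at most $\tfrac12$ --- the paper never obtains, and does not need, your Lipschitz bound. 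These bounds are then bootstrapped on $[\varepsilon,T]$ (Lemma \ref{LE-reg}) into an $L^r$, $r>1$, bound on $\partial_t p^\lambda$ in the weaker space, and the conclusion comes from the Aubin--Lions--Simon lemma rather than Arzel\`a--Ascoli with Lipschitz equicontinuity. To repair your proof you would have to replace the Taylor-expansion/absorption step by this deconvolution argument, at which point it becomes the paper's proof.
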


The proof of this key result is postponed and we first complete the
proof of Theorem \ref{THEO2}.\\
\begin{proof}[Proof of Theorem \ref{THEO2}]
Recalling the definition of $p^\lambda(t,\d x)$ given in \eqref{def-p-lambda}, to prove Theorem \ref{THEO2}, it is sufficient to prove that
\begin{equation}\label{pr}
\lim_{t\to\infty} p^{\sqrt t}(1,\d x)=\Pi_1(\d x)\text{ for the narrow topology of $\mathcal P$}.
\end{equation}
To prove the above statement, fix $\sigma>\frac{N}{2}+2$ and consider a sequence $\{\lambda_n\}_{n\geq 0}\subset (0,\infty)$ such that $\lambda_n\to \infty$ as $n\to\infty$. Then because of Theorem \ref{THEO19}, Theorem \ref{THEO-compact} and using a diagonal extraction process, there exists a sub-sequence, still denoted by $\lambda_n$, such that
\begin{equation*}
\lim_{n\to\infty} p^{\lambda_n}(t,\d x)=\Pi_t\text{ locally uniformly for $t\in (0,\infty)$ with value in $H^{-\sigma-1}(B_R)$}.
\end{equation*} 
Note also that, possibly along a sub-sequence, the sequence of probability measure $\{p^{\lambda_n}(1,\d x)\}$ converges to some positive measure with respect to the vague topology of measures. Hence, one obtained that
$$
p^{\lambda_n}(1,\d x)\to \Pi_1(\d x)\text{ as $n\to \infty$},
$$
wherein the above limit holds with respect to the vague topology of measures. Finally, since $\Pi_1(\R^N)=1$, $\Pi_1$ is also  a probability measure and the above limit holds for the narrow topology of probability measures.
Now, since the sequence $\{\lambda_n\}$ is arbitrary and $\mathcal P$, endowed with the narrow topology, is a metrizable space, one obtains that
\begin{equation*}
\lim_{\lambda\to\infty} p^{\lambda}(1,\d x)=\Pi_1\text{ for the narrow topology of $\mathcal P$}.
\end{equation*}
Thus \eqref{pr} holds true by choosing $\lambda=\sqrt{t}$. This completes the proof of Theorem \ref{THEO2}.

\end{proof}

In the remaining parts of this section, we prove Theorem \ref{THEO-compact}. The proof of this result relies on the formulation of the equation for $p^\lambda$ obtained in Lemma \ref{LE24}. Here we re-write it with a slightly different form, as follows: for $\phi\in W^{1,1}\left(0,\infty;\C_b(\R^N)\right)\cap L^1(0,\infty;\C_b^2(\R^N))$ and $\lambda>0$, the function $t\mapsto p^\lambda(t,\d x)$ satisfies
\begin{equation}\label{310}
\begin{split}
\int_{\R^+\times\R^N} \left[T_{\frac{1}{\lambda^2}}\partial_t \phi\right](t,x)p^\lambda(t,\d x)\d t=&
-\int_{\R^N}\phi(0,x)p^\lambda(0,\d x)-\mathcal{T}_1^\lambda[\phi]\\
&-\mathcal T_2^\lambda[\phi]+\mathcal R^\lambda[\phi],
\end{split}
\end{equation}
wherein $\mathcal T_1^\lambda$ and $\mathcal T_2^\lambda$ are defined in Lemma \ref{LE24}, $T_{\frac{1}{\lambda^2}}$ denotes the nonlocal operator defined by
\begin{align*}
~&g(t,x)=[T_{\frac{1}{\lambda^2}}f](t,x)\\
\Leftrightarrow\;&g(t,x)=f(t,x)+\lambda^2\int_{S}\alpha_\infty(\theta)\int_0^{-\frac{\theta}{\lambda^2}}f(t+l,x)\d l Q(\d\theta,\d z),
\end{align*}
while
\begin{equation*}
\begin{split}
\mathcal{R}^\lambda[\phi]&=\lambda^2\int_0^\infty \int_{S}\left[\alpha(\lambda^2t,\theta)-\alpha_\infty(\theta)\right]\\
&\qquad\qquad\qquad\int_{\R^N}\phi(t,x)\left[p^\lambda(t,\d x)-p^\lambda\left(t+\frac{\theta}{\lambda^2},\d x\right)\right] Q(\d \theta,\d z) \d t\\
&~+\lambda^2\int_S \alpha_\infty(\theta)\int_{\frac{\theta}{\lambda^2}}^0\int_{\R^N}\phi\left(t-\frac{\theta}{\lambda^2},x\right)p^\lambda(t,\d x) \d t Q(\d \theta,\d z).
\end{split}
\end{equation*}
Now, in order to prove Theorem \ref{THEO-compact} we will investigate some regularization properties for the nonlocal operator $T_{\frac{1}{\lambda^2}}$ defined above when $\lambda>\!\!>1$ is large. And, we will be used to complete the proof of the theorem. In the sequel we will first investigate some properties of the linear operator $T_{\frac{1}{\lambda^2}}$ before going to the proof of Theorem \ref{THEO-compact}.

\subsection{Regularity properties}

Let $\left(X,\|\cdot\|_X\right)$ be a reflexive Banach space. For each $h>0$, consider the linear operator $T_h$ defined from $\mathcal S\left(\R^+;X'\right)$ into itself by
\begin{equation}\label{Th}
T_h[\phi](t)=\phi(t)+\frac{1}{h}\int_S\alpha_\infty(\theta)\int_0^{-h\theta}\phi(t+l)\d l Q(\d\theta,\d z),\;t\geq 0.
\end{equation}
Here $X'$ denotes the dual space of $X$. Then the main result of this sub-section reads a s follows:
\begin{theorem}[Sobolev semi-norm estimate]\label{THEO-reg} Let $\{u^h\}_{h>0}$ be a family of $X$-valued tempered distribution on $\R^+$, namely $\{u_h\}_{h>0}\subset \mathcal S'\left(\R^+;X\right)$. We assume that there exist some constant $M>0$ and $\alpha\in (0,1)$ such that for all $h>0$ and all $\phi\in \mathcal S\left(\R^+;X'\right)$ one has:
\begin{align}\label{eq-35}
&\left|\left\langle T_h\phi',u^h\right\rangle\right|\nonumber\\
&\leq M\left[\|\phi\|_{L^\infty(\R^+;X')}+\|\phi\|_{L^1(\R^+;X')}+\sup_{s>0}\frac{1}{s^\alpha}\|\phi(\cdot+s)-\phi\|_{L^1(\R^+;X')}\right].
\end{align}
Herein the symbol $\langle\cdot,\cdot\rangle$ is used to denote the duality pairing between $\mathcal S(\R^+;X')$ and $\mathcal S'(\R^+;X)$. Then, for each $p>1$ and $s\in [\alpha,1)$ with $ps>1$, one has
$$
u^h\in W^{1-s,p'}_{\rm loc}\left(\R^+;X\right),\;\forall h>0,
$$
where $p'$ denotes the conjugate exponent of $p$.
Moreover for each given $T>0$, there exists some constant $C=C(p,s,T)$ such that
$$
\left[u^h\right]_{W^{1-s,p'}(0,T;X)}\leq C,\;\forall h>0.
$$  
In the above estimate, the bracket denotes the $X-$valued Sobolev semi-norm described below.
\end{theorem}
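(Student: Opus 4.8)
The plan is to combine a duality argument with the translation-difference (Gagliardo) characterization of the fractional seminorm, using the hypothesis \eqref{eq-35} to control the increments of $u^h$. First I would recall that, since $0<1-s<1$,
$$\left[u^h\right]_{W^{1-s,p'}(0,T;X)}^{p'} = c\int_0^T \frac{\|\tau_r u^h - u^h\|_{L^{p'}(0,T-r;X)}^{p'}}{r^{1+(1-s)p'}}\,\d r,$$
where $\tau_r$ denotes time-translation by $r$. Thus it suffices to bound $\|\tau_r u^h - u^h\|_{L^{p'}(0,T-r;X)}$ by a suitable power of $r$, uniformly in $h$, and then check convergence of the resulting integral. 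Because $X$ is reflexive, $L^{p'}(0,T-r;X)=\left(L^p(0,T-r;X')\right)'$, so this norm equals the supremum of $\langle u^h,\tau_{-r}\phi-\phi\rangle$ over $\phi$ in the unit ball of $L^p(0,T-r;X')$ (extended by zero), where I used $\langle \tau_r u^h-u^h,\phi\rangle=\langle u^h,\tau_{-r}\phi-\phi\rangle$.

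The heart of the argument is to rewrite the increment $\tau_{-r}\phi-\phi$ so that \eqref{eq-35} applies. Setting $w:=\mathbf 1_{[0,r]}\ast\phi$, i.e. $w(t)=\int_{t-r}^{t}\phi$, one has the exact identity $\tau_{-r}\phi-\phi=-w'$. Since $T_h$ is translation-invariant in time it commutes with $\partial_t$, so choosing $\psi:=-T_h^{-1}w$ gives $T_h\psi'=-w'=\tau_{-r}\phi-\phi$ and hence $\langle u^h,\tau_{-r}\phi-\phi\rangle=\langle T_h\psi',u^h\rangle$. Applying \eqref{eq-35} with this $\psi$ reduces everything to estimating $\|\psi\|_{L^\infty(X')}$, $\|\psi\|_{L^1(X')}$ and $\sup_{\sigma}\sigma^{-\alpha}\|\tau_\sigma\psi-\psi\|_{L^1(X')}$. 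For the averaging kernel $w$ these are elementary: Hölder on a window of length $r$ gives $\|w\|_{L^\infty}\lesssim r^{1/p'}$; Young's inequality gives $\|w\|_{L^1}\lesssim_T r$; and writing $\tau_\sigma w-w=(\tau_\sigma\mathbf 1_{[0,r]}-\mathbf 1_{[0,r]})\ast\phi$ with $\|\tau_\sigma\mathbf 1_{[0,r]}-\mathbf 1_{[0,r]}\|_{L^1}=\min(2r,2\sigma)$, the optimization $\sup_\sigma\sigma^{-\alpha}\min(r,\sigma)\simeq r^{1-\alpha}$ yields the Nikolskii bound $\lesssim_T r^{1-\alpha}$.

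To pass from $w$ to $\psi=-T_h^{-1}w$ I need $T_h^{-1}$ to be bounded on $L^\infty(\R^+;X')$ and $L^1(\R^+;X')$ with constants uniform in $h$, and (being translation-invariant) automatically bounded on the Nikolskii seminorm with the same constant. \textbf{This is where I expect the main difficulty.} Here $T_h=\mathrm{Id}+A_h$, where $A_h$ is convolution against a nonnegative forward kernel of total mass $\Gamma$; it is an anti-causal Volterra operator with look-ahead at most $h$, so on any finite interval $\mathrm{Id}+A_h$ is invertible by the Volterra/Neumann series. The delicate point is to make the norm bounds for $T_h^{-1}$ uniform in $h$: since the window length is $O(h)$ but the total mass is the $h$-independent quantity $\Gamma$, a Gronwall-type estimate for the backward Volterra equation $T_h\zeta=g$ should give constants depending only on $\Gamma$ and $T$. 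Granting these bounds, the three estimates transfer to $\psi$ and \eqref{eq-35} yields
$$\|\tau_r u^h-u^h\|_{L^{p'}(0,T-r;X)}\le M\,C\left(r^{1/p'}+r+r^{1-\alpha}\right)\lesssim r^{\min(1/p',\,1-\alpha)},$$
uniformly in $h$ and for $r\in(0,T)$.

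It remains to insert this into the Gagliardo integral. Convergence of $\int_0^T r^{\min(1/p',1-\alpha)p'-1-(1-s)p'}\,\d r$ near $r=0$ requires $\min(1/p',1-\alpha)>1-s$, that is $s>\max(1/p,\alpha)$, which is exactly ensured by $ps>1$ (giving $s>1/p$) together with $s\ge\alpha$; the borderline case $s=\alpha$ is recovered by recording the estimate as a uniform bound in the slightly larger space $B^{1-s}_{p',\infty}$. Finally I would handle the bookkeeping linking this abstract statement to its use: extending $\phi$ by zero and localizing to $(0,T)$ (and to $B_R$ via $X=H^\sigma_0(B_R)$) so that all supports stay in a fixed bounded set, and approximating the $\psi$ by elements of $\mathcal S(\R^+;X')$ to legitimately invoke \eqref{eq-35}. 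The two genuinely substantial points are the uniform-in-$h$ invertibility estimates for $T_h$ and the endpoint $s=\alpha$; the remainder is the convolution algebra above.
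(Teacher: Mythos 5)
Your skeleton is genuinely different from the paper's and its combinatorial core is sound: the paper does not estimate increments of $u^h$ directly, but instead tests $\langle \partial_t\psi,u^h\rangle$ against arbitrary $\psi\in\mathcal D(0,T;X')$, sets $\phi_h=T_h^{-1}\psi$, controls $\|\phi_h\|_{L^\infty}$, $\|\phi_h\|_{L^1}$ and the Besov seminorm of $\phi_h$ by norms of $\psi$ (Lemmas \ref{LE29}, \ref{LE30}, \ref{LE-reg+}), and concludes that $\partial_t u^h$ is bounded in $\left(W_0^{s,p}(0,T;X')\right)'=W^{-s,p'}(0,T;X)$, whence $u^h\in W^{1-s,p'}$ by the very definition of the negative-index space as the image of $\partial_t$. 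Your route through the Gagliardo integral and the identity $\tau_{-r}\phi-\phi=-\partial_t\left(\mathbf 1_{[0,r]}\ast\phi\right)$ is a legitimate alternative, and your elementary bounds on $w=\mathbf 1_{[0,r]}\ast\phi$ are correct. Both proofs, however, stand or fall on the same ingredient, and this is where your proposal has a genuine gap.

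The gap is precisely the point you flag: uniform-in-$h$ boundedness of $T_h^{-1}$ on $L^1$ and $L^\infty$. Your proposed Gronwall/Volterra--Neumann argument cannot deliver it. Write $T_h=\mathrm{Id}+A_h$, where $A_h$ is convolution with a nonnegative kernel supported in a window of length $h$ but of total mass $\Gamma=\int_S(-\theta)\alpha_\infty(\theta)Q(\d\theta,\d z)$, which is independent of $h$ and is nowhere assumed to be $<1$. For convolution by a nonnegative $L^1$ kernel one has exactly $\|A_h^n\|_{L^1\to L^1}=\Gamma^n$ — there is no factorial Volterra gain, because the kernel mass does not shrink along the iteration — so the Neumann series diverges whenever $\Gamma\geq 1$; and a Gronwall estimate on a window of size $h$ produces constants of order $e^{CT/h}$, which is useless since the application takes $h=\lambda^{-2}\to 0$. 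The paper's resolution is the real content of this section: Lemma \ref{LE-carac} shows, using the positivity of $\alpha_\infty$, that the characteristic function $\Delta(s)=s-\int_S\alpha_\infty(\theta)\left[e^{s\theta}-1\right]Q(\d\theta,\d z)$ has no zero in a half-plane $\Re s>-\varepsilon_0$ other than the simple zero $s=0$; a Paley--Wiener type argument (Claim \ref{claim-L1}, Lemma \ref{LE-repre}) then inverts the symbol and yields $T_1^{-1}=\mathrm{Id}+K\ast$ with $K\in L^1$, exponentially decaying and supported on a half-line. Uniformity in $h$ then comes for free from scaling, since $T_h=D_hT_1D_h^{-1}$ with $(D_h\phi)(t)=\phi(t/h)$, so that $K_h=h^{-1}K(\cdot/h)$ and $\|K_h\|_{L^1}=\|K\|_{L^1}$. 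Without the zero-free half-plane property there is no reason for the resolvent kernel to lie in $L^1$ at all, so no soft argument can replace this step.

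A second, smaller gap is the endpoint $s=\alpha$. As you note, your Gagliardo integral diverges logarithmically there (at $s=\alpha$ with $p\alpha>1$ one has $\min(1/p',1-\alpha)=1-\alpha=1-s$), and retreating to the Nikolskii bound in $B^{1-\alpha}_{p',\infty}$ does not prove the stated $W^{1-s,p'}$ seminorm estimate. This is not a cosmetic loss relative to the paper: Theorem \ref{THEO-reg} is invoked downstream exactly at the endpoint ($\alpha=s=\tfrac12$), since the proof of Lemma \ref{LE-reg} needs a uniform $B^{\frac12,p'}_\infty$ bound on $p^\lambda$, extracted from the $s=\alpha$ case of Lemma \ref{LE-sob} via Lemma \ref{LE30}. (Your Nikolskii fallback would in fact suffice for that particular downstream use, but it is not the statement you set out to prove.)
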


The proof of this result relies on several preliminary studies.
As announced in Theorem \ref{THEO-reg}, the estimates we shall obtain make use of Banach
valued fractional Sobolev spaces. For that purpose let us first introduce further
notations, definitions and some well known results and estimates related to
Banach valued fractional Sobolev spaces. We refer to \cite{1, 19} and the references
therein for more details on such a topic.

Let $(Y,\|\cdot\|_Y\|)$ be a given Banach space. For each open interval $I\subset \R$, each
$p\in [1,\infty]$ and $\theta\in (0,1)$, we define the Sobolev space $W^{\theta,p}(I;Y)$ as follows:
\begin{equation*}
W^{\theta,p}(I;Y)=\{\psi\in L^p(I;Y):\;[\psi]_{W^{\theta,p}(I;Y)}<\infty\},
\end{equation*}
where the Sobolev semi-norm $[\cdot]_{W^{\theta,p}(I;Y)}$ is defined by
\begin{equation}\label{sob}
[\psi]_{W^{\theta,p}(I;Y)}=\left(\iint_{I\times I}\frac{\|\psi(t)-\psi(s)\|_Y^p}{|t-s|^{\theta p+1}}\d t\d s\right)^{\frac{1}{p}}.
\end{equation}
The Sobolev norm is then defined as
$$
\|\psi\|_{W^{\theta,p}(I;Y)}=\|\psi\|_{L^{p}(I;Y)}+[\psi]_{W^{\theta,p}(I;Y)},\;\forall \psi\in W^{\theta,p}(I;Y).
$$
Now we define Sobolev spaces with negative index. When $I\subset\R$ is a given
bounded interval, then for $p\in [1,\infty]$ and $\theta\in (0,1)$  we define the Sobolev
space $W^{-1+\theta,p}(I; Y )$ as the image of the distributional derivative $\partial_t$, that is
$\partial_t W^{\theta,p}(I, Y ) \subset \mathcal D' (I; Y )$, the set of $Y-$valued distributions. In other word it is
defined as the following:
$$
u\in W^{-1+\theta,p}(I; Y )\;\Leftrightarrow\;\exists v\in W^{\theta,p}(I; Y ),\;\;u=\partial_t v\text{ in }\mathcal D' (I; Y ).
$$
Such a function $v$ is called a representation of $u$. The set of the representations of $u$
is given by $v + Y$ where $v$ is a representation of $u$. The space $W^{ −1+\theta,p} (I; Y )$ becomes a
Banach space when endowed with the norm $\|\cdot\|_{W^{ −1+\theta,p} (I;Y )}$ defined as
$$
\|u\|_{W^{ −1+\theta,p} (I;Y )}=\inf_{y\in Y}\|v+y\|_{L^p(I;Y)}+[v]_{W^{\theta,p} (I;Y )},
$$
where $v$ is a given representation of $u$. Note that such a definition does not
depend upon the choice of the representation $v$.

Now let us recall a duality representation that will be used to prove Theorem
\ref{THEO-reg}. The proof of this result can be found in \cite{1}.

\begin{lemma}
Let $I\subset\R$ be a given interval. Let $(Y,\|\cdot\|_Y )$ be a given separable
Banach space. Let $p\in (1,\infty)$ and $\theta\in (0,1)$ be given. Then one has
$$
W^{-\theta,p'} (I; Y ) = \left(W_0^{\theta,p} (I; Y' )\right)'.
$$
Here $p'$ denotes the conjugate exponent associated to $p$, $Y'$ is the dual space of $Y$ while
$W_0^{\theta,p} (I; Y' )$ is the closure of $\mathcal D\left(I; Y'\right )$ in $W^{\theta,p} \left(I; Y' \right)$. The above duality representation holds with respect to the duality pairing $\langle\,\cdot\,;\,\cdot\,\rangle$ on $\mathcal D(I; Y ) \times\mathcal D(I; Y' )$
defined by
$$
\langle v;v'\rangle=\int_I\langle v(t);v'(t)\rangle_{Y,Y'} \d t,\;\forall (v,v')\in \mathcal D(I; Y ) \times\mathcal D(I; Y' ).
$$
\end{lemma}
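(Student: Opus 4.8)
The plan is to reduce the asserted duality to the elementary duality of Bochner $L^p$-spaces, by realizing the fractional Sobolev space as a closed subspace of a product of such spaces through the Gagliardo difference operator. Concretely, writing $Z=Y'$, introduce on $I\times I$ the measure $\d\mu(t,s)=|t-s|^{-\theta p-1}\,\d t\,\d s$ and the map $\mathcal J:\psi\mapsto(\psi,\delta\psi)$, where $(\delta\psi)(t,s)=\psi(t)-\psi(s)$. By the very definition \eqref{sob} of the seminorm one has $\|\psi\|_{L^p(I;Z)}^p+\|\delta\psi\|_{L^p(I\times I,\mu;Z)}^p=\|\psi\|_{L^p}^p+[\psi]_{W^{\theta,p}}^p$, so that $\mathcal J$ realizes $W^{\theta,p}(I;Z)$ as a closed subspace of $\mathbf L:=L^p(I;Z)\times L^p(I\times I,\mu;Z)$, the embedding being isometric for the natural $\ell^p$-product norm (equivalent to $\|\cdot\|_{W^{\theta,p}}$). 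Restricting $\mathcal J$ to the closure $W_0^{\theta,p}(I;Y')$ of $\mathcal D(I;Y')$ identifies it with a closed subspace $E_0\subset\mathbf L$, whence $\bigl(W_0^{\theta,p}(I;Y')\bigr)'\cong E_0'\cong \mathbf L'/E_0^{\perp}$.

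Next I would compute $\mathbf L'$. Since $Y$ is separable (so that the relevant dual has the Radon--Nikodym property with respect to $\mu$, as is ensured in particular under the reflexivity available in the applications of Theorem \ref{THEO-reg}), the pairing $\langle y',y\rangle_{Y',Y}$ gives the Bochner duality $\mathbf L'=L^{p'}(I;Y)\times L^{p'}(I\times I,\mu;Y)$. Consequently, by Hahn--Banach every $\ell\in\bigl(W_0^{\theta,p}(I;Y')\bigr)'$ extends to $\mathbf L$ and is represented by a pair $(f,g)$ in this product, so that for all $\psi\in W_0^{\theta,p}(I;Y')$
\[
\ell(\psi)=\int_I\langle \psi(t),f(t)\rangle_{Y',Y}\,\d t+\iint_{I\times I}\Bigl\langle \tfrac{\psi(t)-\psi(s)}{|t-s|^{\theta+1/p}},g(t,s)\Bigr\rangle_{Y',Y}\,\d t\,\d s,
\]
and conversely any such pair defines a bounded functional. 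This realizes $\bigl(W_0^{\theta,p}(I;Y')\bigr)'$ as the space of these $Y$-valued pairs modulo the annihilator $E_0^{\perp}$, with the displayed integral as duality pairing.

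Finally I would identify this representation space with $W^{-\theta,p'}(I;Y)$ and check the stated pairing. Writing $-\theta=-1+(1-\theta)$, the negative-order space is, per the definition given above, $W^{-\theta,p'}(I;Y)=\partial_t W^{1-\theta,p'}(I;Y)$; so one must show that each functional $\ell_{(f,g)}$ coincides with $\psi\mapsto\langle\partial_t w,\psi\rangle$ for a fractional primitive $w\in W^{1-\theta,p'}(I;Y)$, and conversely, and that, specialized to $v\in\mathcal D(I;Y)$ and $v'\in\mathcal D(I;Y')$, the pairing reduces to the stated integral $\int_I\langle v(t),v'(t)\rangle_{Y,Y'}\,\d t$. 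The easy inclusion is that any $u=\partial_t w$ with $w\in W^{1-\theta,p'}$ defines a bounded functional: for $\psi\in\mathcal D(I;Y')$ one has $\langle\partial_t w,\psi\rangle=-\int_I\langle w(t),\psi'(t)\rangle_{Y,Y'}\,\d t$, and a fractional integration-by-parts estimate controls this by $\|w\|_{W^{1-\theta,p'}}\,\|\psi\|_{W^{\theta,p}}$ (total smoothness one), so the functional extends to $W_0^{\theta,p}$ by density. The reverse reconstruction of a primitive $w$ of fractional order $1-\theta$ from the abstract Gagliardo pair $(f,g)$ is the crux: it is where the interplay of the exponents $\theta$ and $1-\theta$ must be exploited, and where the extension-by-zero structure of $W_0^{\theta,p}$ together with density of $\mathcal D(I;Y')$ guarantees that the pairing is well defined independently of the representative and annihilates $E_0^{\perp}$. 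I expect this identification step, rather than the soft Hahn--Banach and $L^p$-duality arguments, to be the main obstacle.
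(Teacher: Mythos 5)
Your attempt should first be measured against what the paper actually does: the paper does not prove this lemma at all, but cites Amann \cite{1} for it. Attempting a self-contained proof is legitimate, but your proposal has a genuine gap, and it is the one you yourself flag at the end. The Gagliardo map $\mathcal J$, the Hahn--Banach extension, and the quotient representation $\bigl(W_0^{\theta,p}(I;Y')\bigr)'\cong \mathbf L'/E_0^{\perp}$ are all soft; the entire content of the lemma is the identification of that abstract quotient with $\partial_t W^{1-\theta,p'}(I;Y)$, with equivalent norms, and for this you offer no argument --- only the statement that it is ``the crux'' and ``the main obstacle.'' Moreover, even the direction you call easy is not: the inequality $\left|\int_I\langle w(t),\psi'(t)\rangle_{Y,Y'}\,\d t\right|\le C\,\|w\|_{W^{1-\theta,p'}(I;Y)}\,\|\psi\|_{W^{\theta,p}(I;Y')}$ does not follow from H\"older, since $\psi'$ costs a full derivative while $w$ only has fractional smoothness $1-\theta$; this ``fractional integration by parts'' bound is itself one half of the asserted duality and requires genuine work (interpolation-theoretic or Fourier-analytic, which is essentially how the result is obtained in \cite{1}). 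So the plan reduces the lemma to an unproved statement that is equivalent in difficulty to the lemma.

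There is a second, quieter defect in the $L^p$-duality step. The dual of $L^p(I;Y')$ (and likewise of $L^p(I\times I,\mu;Y')$) is $L^{p'}(I;Y'')$, valid when $Y''$ has the Radon--Nikodym property; to represent functionals by \emph{$Y$-valued} pairs $(f,g)$, as you do, one needs $Y$ reflexive (so that $Y''=Y$), or a separate argument that the representing functions take values in the canonical image of $Y$ in $Y''$. Separability of $Y$, the only hypothesis in the lemma, does not give this: separability of $Y$ implies neither reflexivity nor the RNP for $Y''$. Your parenthetical appeal to ``the reflexivity available in the applications of Theorem \ref{THEO-reg}'' concedes the mismatch with the lemma's stated hypotheses rather than closing it. In summary: the architecture (Gagliardo embedding into a product of Bochner spaces, then dualize) is a reasonable and genuinely different outline from the paper's citation-based treatment, but as written it is a reduction, not a proof --- the decisive reconstruction of the fractional primitive and the justification of the vector-valued duality are both missing.
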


Before recalling some important estimates that will be used in the sequel, let us
introduce further notations.
Let $I\subset\R$ be a given interval and let $(Y,\|\cdot\|_Y )$ be a Banach space. For each
$p\in [1,\infty]$ and $s\in (0,1)$ let us define the Besov semi-norm for a function
$\phi\in L^p_{\rm loc} (I; Y )$ by
\begin{equation*}
[\phi]_{B_\infty^{s,p}(I;Y)}=\sup_{h>0} h^{-s}\|\phi(\cdot+h)-\phi\|_{L^p(I_h;Y)}\text{ with }I_h=\{x\in I:\;x+h\in I\}.
\end{equation*}

We now turn to derive some important estimates.
Using straightforward computations we can derive the following estimates:
\begin{lemma}\label{LE29} Let $\theta\in (0, 1)$ be given. Let $T > 0$ be given. Then for all $\psi\in
\mathcal D (0, T ; Y )$ one has
$$
[\bar \psi]_{ W^{\theta,1} (\R;Y )}\leq  [\psi]_{ W^{\theta,1} (0,T ;Y )} +
\frac{4T^{1-\theta}}{\theta(1-\theta)}\|\psi\|_{L^\infty (0,T ;Y )}.
$$
Here $\bar\psi$ denotes the extension by zero of $\psi$.
\end{lemma}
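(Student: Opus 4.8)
The plan is to write out the full-line semi-norm $[\bar\psi]_{W^{\theta,1}(\R;Y)}$ as the double integral
\begin{equation*}
[\bar\psi]_{W^{\theta,1}(\R;Y)}=\iint_{\R\times\R}\frac{\|\bar\psi(t)-\bar\psi(s)\|_Y}{|t-s|^{\theta+1}}\,\d t\,\d s,
\end{equation*}
and to decompose the domain $\R\times\R$ according to the support of $\bar\psi$, namely into the four regions determined by whether each of $t,s$ lies in $(0,T)$ or not. On $(0,T)\times(0,T)$ the integrand coincides exactly with that of $[\psi]_{W^{\theta,1}(0,T;Y)}$, contributing the first term on the right-hand side. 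On $\big(\R\setminus(0,T)\big)\times\big(\R\setminus(0,T)\big)$ both $\bar\psi(t)$ and $\bar\psi(s)$ vanish, so the contribution is zero. The only remaining contributions come from the two mixed regions, which by symmetry of the integrand are equal, so it suffices to estimate twice the integral over $t\in(0,T)$, $s\in\R\setminus(0,T)$, where $\bar\psi(s)=0$ and the integrand reduces to $\|\psi(t)\|_Y/|t-s|^{\theta+1}$.

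Next I would carry out the inner integration over $s$ for fixed $t\in(0,T)$. Splitting $\R\setminus(0,T)=(-\infty,0)\cup(T,\infty)$ and using the elementary primitive of $u\mapsto u^{-\theta-1}$, a direct computation gives
\begin{equation*}
\int_{\R\setminus(0,T)}\frac{\d s}{|t-s|^{\theta+1}}=\frac{1}{\theta}\Big(t^{-\theta}+(T-t)^{-\theta}\Big),
\end{equation*}
where the condition $\theta\in(0,1)$ guarantees convergence of both tail integrals. Bounding $\|\psi(t)\|_Y\leq\|\psi\|_{L^\infty(0,T;Y)}$ and integrating in $t$, the identity $\int_0^T t^{-\theta}\,\d t=\int_0^T(T-t)^{-\theta}\,\d t=T^{1-\theta}/(1-\theta)$ then yields that the two mixed regions together contribute at most
\begin{equation*}
\frac{2}{\theta}\,\|\psi\|_{L^\infty(0,T;Y)}\int_0^T\Big(t^{-\theta}+(T-t)^{-\theta}\Big)\,\d t=\frac{4T^{1-\theta}}{\theta(1-\theta)}\,\|\psi\|_{L^\infty(0,T;Y)}.
\end{equation*}

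Adding the three contributions gives the claimed estimate. There is no genuine obstacle here: the result follows from a clean domain decomposition and one explicit one-dimensional integral, and the assumption $\theta\in(0,1)$ is exactly what makes the tail integrals $\int_t^\infty u^{-\theta-1}\,\d u$ finite while keeping $\int_0^T t^{-\theta}\,\d t$ finite. The only point requiring a little care is that the mixed regions are weighted by the pointwise norm $\|\psi(t)\|_Y$ rather than by a difference, which is why the $L^\infty$ norm (and not the semi-norm) appears in the boundary term; using $\psi\in\mathcal D(0,T;Y)$ ensures this norm is finite and that $\bar\psi$ indeed belongs to $L^1(\R;Y)$ so that all the integrals above are well defined.
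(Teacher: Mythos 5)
Your proof is correct, and it fills in exactly the "straightforward computations" the paper alludes to without writing out (the paper states Lemma \ref{LE29} with no proof). The domain decomposition of $\R\times\R$ into $(0,T)^2$, the null region, and the two symmetric mixed regions, followed by the explicit tail integral $\int_{\R\setminus(0,T)}|t-s|^{-\theta-1}\,\d s=\theta^{-1}\left(t^{-\theta}+(T-t)^{-\theta}\right)$ and the integration $\int_0^T t^{-\theta}\,\d t=T^{1-\theta}/(1-\theta)$, reproduces the stated constant $\frac{4T^{1-\theta}}{\theta(1-\theta)}$ exactly.
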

Next let us recall the estimate derived by Simon in Corollary 24 of \cite{19}:
\begin{lemma}\label{LE30}
Let $I\subset \R$ be a given interval and let $(Y,\|\cdot\|_Y )$ be a Banach space.
Let $\theta\in  (0, 1)$ and $p \in [1, \infty)$ be given. Then for each $\psi\in W^{\theta,p} (I, Y )$ one has
\begin{equation*}
[\psi]_{B_\infty^{\theta,p}(I;Y)}\leq \frac{2}{\theta}[\psi]_{W^{\theta,p}(I;Y)}.
\end{equation*}

\end{lemma}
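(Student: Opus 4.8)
The plan is to prove the embedding $W^{\theta,p}(I;Y)\hookrightarrow B_\infty^{\theta,p}(I;Y)$ directly, producing an explicit constant that one then checks is at most $2/\theta$. Write $\omega_p(h):=\|\psi(\cdot+h)-\psi\|_{L^p(I_h;Y)}$, so that by definition $[\psi]_{B_\infty^{\theta,p}(I;Y)}=\sup_{h>0}h^{-\theta}\omega_p(h)$; it therefore suffices to bound $h^{-\theta}\omega_p(h)$ for each fixed $h>0$. As a preliminary I would rewrite the Gagliardo double integral in polar form: performing the change of variables $t=s+u$ and using the symmetry $t\leftrightarrow s$ gives
\begin{equation*}
[\psi]_{W^{\theta,p}(I;Y)}^p=2\int_0^\infty u^{-\theta p-1}\,\omega_p(u)^p\,\d u,
\end{equation*}
so that $\left(\int_0^h u^{-\theta p-1}\omega_p(u)^p\,\d u\right)^{1/p}\le 2^{-1/p}[\psi]_{W^{\theta,p}(I;Y)}$ for every $h>0$.

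The crux is an \emph{averaging} inequality that controls a first difference at scale $h$ by an average of first differences at all smaller scales. For $x\in I_h$ and $u\in(0,h)$ I insert the intermediate value $\psi(x+u)$ and average over $u$, using the elementary identity
\begin{equation*}
\psi(x+h)-\psi(x)=\frac1h\int_0^h\big[\psi(x+h)-\psi(x+u)\big]\,\d u+\frac1h\int_0^h\big[\psi(x+u)-\psi(x)\big]\,\d u .
\end{equation*}
Taking the $Y$-norm, then the $L^p(\d x)$-norm over $I_h$, and applying Minkowski's integral inequality reduces the right-hand side to an integral of moduli. Here one must be careful with the shrinking domains: the substitution $w=x+u$ shows that the first term contributes $\le\omega_p(h-u)$ because $I_h+u\subseteq I_{h-u}$ (this uses only that $I$ is an interval), while the second contributes $\le\omega_p(u)$. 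Hence
\begin{equation*}
\omega_p(h)\le\frac1h\int_0^h\big[\omega_p(h-u)+\omega_p(u)\big]\,\d u=\frac2h\int_0^h\omega_p(u)\,\d u .
\end{equation*}

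Finally I would bring in the Gagliardo weight by Hölder's inequality. Writing $\omega_p(u)=u^{\theta+1/p}\cdot\big(u^{-\theta-1/p}\omega_p(u)\big)$ and applying Hölder on $(0,h)$ with exponents $p'$ and $p$,
\begin{equation*}
\int_0^h\omega_p(u)\,\d u\le\left(\int_0^h u^{(\theta p+1)/(p-1)}\,\d u\right)^{1/p'}\left(\int_0^h u^{-\theta p-1}\omega_p(u)^p\,\d u\right)^{1/p}.
\end{equation*}
The first factor equals $h^{\theta+1}$ times an explicit numerical constant, the second is $\le 2^{-1/p}[\psi]_{W^{\theta,p}(I;Y)}$ by the polar form; combining with the averaging inequality, the powers of $h$ cancel against $h^{-\theta-1}$ and yield
\begin{equation*}
h^{-\theta}\omega_p(h)\le 2^{1-1/p}\left(\frac{p(\theta+1)}{p-1}\right)^{-(p-1)/p}[\psi]_{W^{\theta,p}(I;Y)},
\end{equation*}
uniformly in $h>0$. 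The case $p=1$ is treated directly (the inner Hölder step is vacuous and gives prefactor $1$). It then remains to check, by an elementary estimate of the prefactor, that it is bounded by $2/\theta$ for all $p\in[1,\infty)$ and $\theta\in(0,1)$; taking the supremum over $h$ gives the claim. I expect the main obstacles to be purely technical: the careful bookkeeping of the nested domains $I_h,\,I_{h-u},\,I_u$ so that the averaging inequality is uniform for a general, possibly bounded, interval, and the final reduction of the explicit constant to Simon's clean value $2/\theta$.
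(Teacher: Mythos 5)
Your proof is correct, and it is genuinely different from what the paper does: the paper gives no proof of this lemma at all, quoting it directly from Simon's Corollary 24 of \cite{19}, so your argument supplies a self-contained proof of a result the paper outsources. Your three ingredients are all sound: the one-dimensional reduction $[\psi]_{W^{\theta,p}(I;Y)}^p=2\int_0^\infty u^{-\theta p-1}\omega_p(u)^p\,\d u$; the averaging inequality $\omega_p(h)\le\frac{2}{h}\int_0^h\omega_p(u)\,\d u$, whose domain bookkeeping ($I_h+u\subseteq I_{h-u}$ and $I_h\subseteq I_u$) you correctly reduce to the hypothesis that $I$ is an interval; and the H\"older step, including the vacuous case $p=1$. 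Two remarks on what your route buys. First, your constant $2^{1-1/p}\bigl(\tfrac{p-1}{p(\theta+1)}\bigr)^{(p-1)/p}$ is at most $2^{1-1/p}<2$ uniformly in $\theta\in(0,1)$ and $p\in[1,\infty)$ (the second factor has base less than $1$ and nonnegative exponent), so the final "elementary estimate" you defer is immediate since $2\le 2/\theta$; in fact you prove a strictly stronger statement than the one quoted, with a constant that does not degenerate as $\theta\to 0^+$. This sharpening comes from using Minkowski's integral inequality \emph{before} H\"older, rather than inserting the intermediate point through $\|a+b\|^p\le 2^{p-1}\left(\|a\|^p+\|b\|^p\right)$ and integrating, which is the kind of step that typically produces a $1/\theta$-type blow-up. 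Second, the only points you leave implicit --- joint measurability of $(x,u)\mapsto\psi(x+u)$ needed for Minkowski's integral inequality, and integrability of $u\mapsto\psi(x+u)$ needed for the averaging identity --- are standard for strongly measurable functions, the latter because the paper's definition of $W^{\theta,p}(I;Y)$ includes $\psi\in L^p(I;Y)$. So the proposal is a complete, and quantitatively sharper, proof of the lemma.
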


Now in view of proving Theorem \ref{THEO-reg}, we are concerned in deriving some
properties of the linear operator $T_h$ defined above in \eqref{Th}. Roughly speaking,
we shall show that such an operator is invertible on $L^1 (\R; X' )$ and that the
inverse operator is a kernel operator that enjoys nice estimates.
To that aim, we first investigate some properties of the function $\Delta:\mathbb C\to \mathbb C$ defined by
\begin{equation*}
\Delta (s)=s-\int_S\alpha_\infty(\theta)[e^{s\theta}-1]\;Q(\d\theta,\d z).
\end{equation*}
The following lemma holds true:
\begin{lemma}\label{LE-carac}
The analytic function $\Delta$ enjoys the following property: there exists $\varepsilon_0>0$ such that
\begin{equation*}
\begin{cases}
\Re(s)>-\varepsilon_0,\\
\Delta(s)=0,
\end{cases}\Rightarrow\;s=0.
\end{equation*}
\end{lemma}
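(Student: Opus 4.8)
The plan is to read $\Delta$ as the characteristic function of the limiting (constant-coefficient) delay equation and to locate its zeros using the sign structure forced by $\theta\in[-1,0]$ together with the positivity of $\alpha_\infty$. First I would record that $\Delta$ is entire: on any compact set the integrand $\alpha_\infty(\theta)(e^{s\theta}-1)$ is bounded uniformly in $(\theta,z)\in S$ because $\theta\in[-1,0]$, so one may differentiate under the integral sign. Since $\Delta(0)=0$ trivially, the task is to show that $0$ is the only zero in some half-plane $\{\Re(s)>-\varepsilon_0\}$.

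The core step is to analyse the closed right half-plane. Suppose $\Delta(s)=0$ with $s=\xi+i\omega$, $\xi\ge0$. Taking the real part of $s=\int_S\alpha_\infty(\theta)(e^{s\theta}-1)\,Q(\d\theta,\d z)$ gives $\xi=\int_S\alpha_\infty(\theta)\big(e^{\xi\theta}\cos(\omega\theta)-1\big)\,Q(\d\theta,\d z)$; since $\theta\le0$ and $\xi\ge0$ imply $e^{\xi\theta}\le1$, the integrand is $\le0$, and as $\alpha_\infty>0$ this forces $\xi\le0$, hence $\xi=0$. So any such zero lies on the imaginary axis. For $s=i\omega$ the real part of $\Delta(i\omega)=0$ reads $\int_S\alpha_\infty(\theta)\big(1-\cos(\omega\theta)\big)\,Q(\d\theta,\d z)=0$; the integrand being non-negative with $\alpha_\infty>0$, we get $\cos(\omega\theta)=1$, and therefore $\sin(\omega\theta)=0$, for $Q$-almost every $(\theta,z)$. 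Substituting into the imaginary part $\omega=\int_S\alpha_\infty(\theta)\sin(\omega\theta)\,Q(\d\theta,\d z)$ then yields $\omega=0$. Thus $s=0$ is the unique zero with $\Re(s)\ge0$.

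The delicate part, and the step I expect to be the main obstacle, is to push the zero-free region slightly across the imaginary axis into $\{\Re(s)<0\}$, since a priori zeros could accumulate towards that axis either at finite height or at infinity. Accumulation at infinity is ruled out by a crude lower bound: for $\Re(s)\ge-\varepsilon_0$ with $\varepsilon_0\le1$ and $\theta\in[-1,0]$ one has $|e^{s\theta}|=e^{\Re(s)\theta}\le e$, so $|e^{s\theta}-1|\le e+1$ and, using that $Q$ is a probability measure,
\[
|\Delta(s)|\ge |s|-\int_S\alpha_\infty(\theta)\,|e^{s\theta}-1|\,Q(\d\theta,\d z)\ge |s|-C_0,\qquad C_0:=(e+1)\sup_{[-1,0]}\alpha_\infty,
\]
so $\Delta$ has no zero with $|s|>C_0$ in that strip. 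Accumulation at finite height is ruled out by analyticity: the zeros of the entire function $\Delta$ are isolated, hence finite in number on the compact set $\{\Re(s)\ge-1,\ |s|\le C_0\}$, and by the previous step each of them other than $0$ has strictly negative real part. It then suffices to take $\varepsilon_0>0$ smaller than $1$ and smaller than the distance from these finitely many zeros to the imaginary axis, which leaves $s=0$ as the only zero of $\Delta$ in $\{\Re(s)>-\varepsilon_0\}$ and proves the lemma.
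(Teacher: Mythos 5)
Your proof is correct and takes essentially the same route as the paper: first the real/imaginary-part decomposition of $\Delta(s)=0$ combined with the positivity of $\alpha_\infty$ and $\theta\le 0$ to show $s=0$ is the only zero in the closed right half-plane, then a quantitative bound confining any zeros near the imaginary axis to a compact set, so that analyticity (isolated zeros, hence finitely many there) lets you push the zero-free region slightly across the axis. The only difference is cosmetic: the paper bounds $|\Im(s)|$ for roots and leaves the compactness step terse, whereas you bound $|s|$ via $|\Delta(s)|\ge |s|-C_0$ and spell that step out in full.
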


\begin{proof}
Note that $\Delta(0)=0$. Next let $s=\alpha+i\omega$ with $\alpha\geq 0$ and $\omega$ be such that $\Delta(s)=0$. Let us show that $\alpha=\omega=0$.\\
To do so, let us first observe that the function $x\mapsto \Delta(x)$ is increasing on $\R$. Hence $s=0$ is the only real root of $\Delta$.
Next the equation $\Delta(\alpha+i\omega)=0$ re-writes as
\begin{equation*}
\begin{cases}
\alpha+\int_S\alpha_\infty(\theta)Q(\d\theta,\d z)=\int_S\alpha_\infty(\theta)e^{\alpha\theta}\cos(\omega\theta)\;Q(\d\theta,\d z),\\
\omega-\int_S\alpha_\infty(\theta)e^{\alpha\theta}\sin(\omega\theta)\;Q(\d\theta,\d z)=0.
\end{cases}
\end{equation*}
Hence, from the first equation one gets $\Delta(\alpha)\leq 0$, that implies that $\alpha\leq 0$ and thus $\alpha=0$ since $\alpha\geq 0$. Next the first equation becomes
$$
\int_S\alpha_\infty(\theta)[1-\cos(\omega\theta)]Q(\d \theta,\d z)=0,
$$
and, since $\alpha_\infty>0$, this implies that $\cos(\omega\theta)=1$ $Q-a.e.$ for $(\theta,z)\in S$ and thus $\sin(\omega\theta)=0$ $Q-a.e.$ for $(\theta,z)\in S$. Plugging this information together with $\alpha=0$ into the second equation in the above system yields $\omega=0$. The above argument shows hat
$$
\Re(s)\geq 0\text{ and }\Delta(s)=0\;\Rightarrow\;s=0.
$$
Recalling that $\Delta$ is an analytic function, to complete the proof of the lemma, it is sufficient to note that if $s\in\mathbb C$ satisfies $\Delta(s)=0$ then
\begin{equation*}
|\Im(s)|\leq \int_S \alpha_\infty(\theta)e^{\theta\Re(s)}Q(\d\theta,\d z).
\end{equation*} 
Indeed the above estimate ensures that there is no sequence of roots for $\Delta$ approaching the imaginary axis. This completes the proof of the lemma.
\end{proof}

Observe furthermore that $\Delta'(0)=1-\int_S \theta\alpha_\infty(\theta) Q(\d\theta,\d z)>0$ so that $s=0$ is a simple root and the function $s\mapsto \frac{s}{\Delta(s)}$ is holomorphic on the half plane $\{s\in\mathbb C:\;\Re(s)>-\varepsilon_0\}$.

\begin{lemma} \label{LE-repre} For each function $\psi\in L^1 (\R, X' )$, for each parameter $h > 0$, there
exists a unique function $\phi=\phi_h\in  L^1 (\R; X' )$ such that
\begin{equation*}
\psi = T_h [\phi].
\end{equation*}
Moreover there exists a real valued function $K \in L^1 (\R; \R)$ such that there exist
$\varepsilon_0 > 0$ small enough and some constant $M_0 > 0$ such that:
\begin{equation}\label{esa}
|K(x)| \leq M_0 e^{ -\varepsilon_0 |x|} , \;\forall x \in \R,\;\;
K(x) = 0 \text{ a.e. } x \leq 0,
\end{equation}
and such that for all $\psi\in L^1 (\R, X' )$, for all $h > 0$, one has
\begin{equation}\label{form}
\psi=T_h[\phi]\;\Leftrightarrow\;\phi(t)=\psi(t)+\frac{1}{h}\int_\R K\left(\frac{t-s}{h}\right)\psi(s)\d s,\;t\in\R.
\end{equation}

\end{lemma}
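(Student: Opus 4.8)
The plan is to exploit that, in the time variable $t$, $T_h$ is a convolution (translation‑invariant) operator whose multiplier is scalar; inverting it therefore reduces to a scalar Fourier‑multiplier problem, the $X'$‑valued character being immaterial once a scalar $L^1$ convolution kernel has been produced (Young's inequality then yields boundedness on $L^1(\R;X')$). First I would compute the symbol: applying $T_h$ to $e^{s\,\cdot}$ gives $T_h[e^{s\,\cdot}]=m_h(s)e^{s\,\cdot}$ with
$$m_h(s)=1+\frac{1}{sh}\int_S\alpha_\infty(\theta)\left[e^{-sh\theta}-1\right]Q(\d\theta,\d z).$$
Recognising the bracket through $\Delta$, namely $\int_S\alpha_\infty(\theta)[e^{-sh\theta}-1]\,Q=-sh-\Delta(-sh)$, this collapses to the clean identity $m_h(s)=\Delta(-sh)/(-sh)$. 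Hence the candidate inverse has symbol $1/m_h(s)=-sh/\Delta(-sh)$, a function of $w=-sh$ alone; since $k_h(u)=\tfrac1h K(u/h)$ forces $\widehat{k_h}(\xi)=\widehat K(h\xi)$, a single profile $K$ works for every $h$, determined by
$$\widehat K(\zeta)=\frac{-i\zeta}{\Delta(-i\zeta)}-1=\frac{\int_S\alpha_\infty(\theta)\left[e^{-i\zeta\theta}-1\right]Q(\d\theta,\d z)}{\Delta(-i\zeta)}.$$

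Next I would construct $K$ and establish \eqref{esa}. Here Lemma \ref{LE-carac} together with the remark that $0$ is a simple root does the essential work: $w\mapsto w/\Delta(w)$ is holomorphic on the half‑plane $\{\Re w>-\varepsilon_0\}$, so $\widehat K$ extends holomorphically across the real axis, and on the real line $m_h(i\xi)\neq0$ for every $\xi$ (for $\xi\neq0$ by Lemma \ref{LE-carac}, and $m_h(0)=\Delta'(0)=1+\Gamma>0$). I would then define $K$ by the inverse transform of $\widehat K$ (equivalently a Bromwich contour integral in $w=-i\zeta$). The symmetry $\overline{\widehat K(\xi)}=\widehat K(-\xi)$, valid because $\alpha_\infty,\theta$ and $Q$ are real, shows $K$ is real‑valued. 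The one‑sided support and the exponential bound both come from shifting the contour: pushing $\Re w$ in the direction in which $|e^{-wv}|$ decays forces $K$ to vanish on one half‑line, while shifting to the line $\Re w=-\varepsilon_0+\delta$ — permissible exactly because of the spectral gap in Lemma \ref{LE-carac} — yields $|K(x)|\le M_0 e^{-\varepsilon_0|x|}$, that is precisely \eqref{esa}.

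With $K$ in hand I would close the argument algebraically. Writing $T_h\phi=\phi+\kappa_h\ast\phi$, where $\kappa_h\in L^1(\R)$ is the (compactly supported) kernel of the integral part, and $k_h(u)=\tfrac1h K(u/h)$, I would check $(\delta+\kappa_h)\ast(\delta+k_h)=\delta$. This is immediate on the Fourier side, since the two transforms are $m_h(i\xi)$ and $1/m_h(i\xi)$, whose product is $1$; as $\kappa_h+k_h+\kappa_h\ast k_h\in L^1(\R)$ has vanishing Fourier transform, it is $0$. Commutativity of convolution then gives at once: existence ($\phi=\psi+k_h\ast\psi$ solves $T_h\phi=\psi$ and lies in $L^1(\R;X')$ by Young's inequality), uniqueness ($T_h\phi=0\Rightarrow \phi=(\delta+k_h)\ast T_h\phi=0$), and the explicit representation \eqref{form}.

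The main obstacle is the construction of $K$ and the verification of \eqref{esa}; the surrounding algebra is routine. The delicacy is that $\widehat K$ decays only like $|\zeta|^{-1}$ along horizontal lines — the numerator $\int_S\alpha_\infty[e^{-i\zeta\theta}-1]\,Q$ is bounded but in general oscillatory and non‑decaying (for instance when $Q$ charges atoms in $\theta$) — so $\widehat K\notin L^1(\R)$ and $K$ will typically carry jumps. I would therefore first define $K$ as an $L^2$ inverse transform, which is legitimate since $\widehat K$ is bounded near $0$ (with value $-\Gamma/(1+\Gamma)$) and $O(|\zeta|^{-1})$ at infinity, hence square‑integrable. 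The holomorphic extension from Lemma \ref{LE-carac} then upgrades this: the inversion contour can be shifted because the connecting horizontal segments have bounded length while the integrand is $O(|w|^{-1})$ on them, and this simultaneously produces the vanishing of $K$ on one half‑line and the pointwise bound $|K(x)|\le M_0 e^{-\varepsilon_0|x|}$. Crucially, once that pointwise bound and the one‑sided support are in place, $K\in L^1(\R)$ follows automatically, which is exactly what the closing algebra requires.
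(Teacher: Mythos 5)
Your skeleton coincides with the paper's: the multiplier identity $\mathcal F T_h[\phi]=\frac{\Delta(ih\xi)}{ih\xi}\mathcal F\phi$ (up to your sign convention), the observation that one profile $K$ with $K_h=h^{-1}K(\cdot/h)$ serves all $h$, the realness of $K$, and the closing convolution algebra giving existence, uniqueness and \eqref{form} are all correct and are exactly how the paper proceeds. The genuine gap is where you yourself flag the delicacy: the construction of $K$ and the proof of \eqref{esa}. You correctly observe that $\widehat K(\zeta)=\frac{-i\zeta}{\Delta(-i\zeta)}-1$ decays only like $|\zeta|^{-1}$ on horizontal lines, hence is not integrable there; but you then assert that shifting the ($L^2$-interpreted) inversion contour ``simultaneously produces the vanishing of $K$ on one half-line and the pointwise bound $|K(x)|\le M_0e^{-\varepsilon_0|x|}$.'' That step fails: once the shifted integral is not absolutely convergent, you cannot obtain a pointwise bound by estimating the integrand --- the estimate you would write is $e^{-(\varepsilon_0-\delta)x}\int_\R\left|\widehat K(\xi+ic)\right|\d\xi=+\infty$. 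The $L^2$ (Paley--Wiener/$H^2$) reading of the contour shift does give the one-sided support and a weighted bound $e^{\varepsilon'\cdot}K\in L^2(0,\infty)$ (which, via Cauchy--Schwarz, would suffice for the uniform $L^1$ bound on $K_h$ used later in the paper), but it does not give the pointwise estimate \eqref{esa} that the lemma asserts; and no soft argument can, because, as you note yourself, $K$ generically has jump discontinuities, and these are carried precisely by the non-decaying part of the symbol.

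The missing idea --- the heart of the paper's proof of Claim \ref{claim-L1} --- is a decomposition of the symbol that isolates this non-decaying part into terms inverted by hand. Setting $\gamma=\int_S\alpha_\infty(\theta)Q(\d\theta,\d z)$ and $I(s)=\int_S\alpha_\infty(\theta)e^{s\theta}Q(\d\theta,\d z)$, the paper writes
\begin{equation*}
\frac{s}{\Delta(s)}=1-\frac{\gamma}{s+\gamma}+\frac{I(s)}{s+\gamma}+R(s),\qquad R(s)=\frac{I(s)(I(s)-\gamma)}{\Delta(s)(s+\gamma)}.
\end{equation*}
The two middle terms have explicit inverse Fourier transforms, $-\gamma E(\gamma\cdot)$ and $\int_S E(\gamma\cdot+\theta)Q(\d\theta,\d z)$ with $E(x)=e^{-x}\mathbf{1}_{x>0}$: these are supported in $[0,\infty)$, decay exponentially, and contain all the jumps. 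The remainder satisfies $|R(s)|\le C(1+|s|^2)^{-1}$ on the half plane $\Re s>-\varepsilon$, so its inverse transform $K_R$ is an absolutely convergent integral; there the contour shift you have in mind is legitimate and yields the pointwise exponential decay, while the support ${\rm supp}\,K_R\subset[0,\infty)$ is obtained from the Suwa--Yoshino theorem (cited as Theorem 4.4 of \cite{20}), not from naive estimation. Your argument becomes complete once you insert such a decomposition (an equivalent route: write $\widehat K=-\widehat{\kappa_1}+\widehat{\kappa_1}^2/(1+\widehat{\kappa_1})$, with $\kappa_1\in L^1(\R)$ the kernel of the integral part of $T_1$, the second term again having a quadratically decaying symbol); without it, \eqref{esa} is not established.
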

In the sequel we shall denote for each $h > 0$:
$$
K_h (x) =\frac{1}{h}K\left(h^{-1}x\right).
$$
Note that the above bound, namely \eqref{esa}, for the convolution kernel $K_h$ ensures
that
$$
\|K_h\|_{L^1(\R)}\leq M_0\varepsilon_0^{-1},\;\forall h>0.
$$
\begin{proof}
Let $h > 0$ be given. Let $\psi\in L^1 (\R; X' )$ be given. We aim at solving
the equation $T_h [\phi] = \psi$. We shall denote by $\mathcal F$ the Fourier transform. Then
applying the Fourier transform yields
\begin{equation*}
\begin{split}
\mathcal F T_h[\phi](\xi)=&\left[1+\frac{1}{h}\int_S\alpha_\infty(\theta)\int_0^{-h\theta} e^{-il\xi}\d l\;Q(\d\theta,\d z)\right]\mathcal F\phi(\xi)=\frac{\Delta(ih\xi)}{ih\xi}\mathcal F\phi(\xi).
\end{split}
\end{equation*}
Now we claim that:
\begin{claim}\label{claim-L1}
There exists a function $K:\R\to\R$ and $\varepsilon>0$ such that
\begin{equation*}
K(x)=O(e^{-\varepsilon x}),\;x\to\infty,\;\;K(x)=0\text{ for $x\leq 0$},
\end{equation*}
and
\begin{equation*}
\frac{i\xi}{\Delta(i\xi)}=1+\mathcal F K(\xi),\;\forall \xi\in\R.
\end{equation*}
\end{claim}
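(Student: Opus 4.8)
The plan is to realize $K$ as the kernel of the convolution resolvent of a renewal-type operator on the half-line, with Lemma \ref{LE-carac} guaranteeing both its existence and its decay.

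First I would rewrite $\Delta$ and the target symbol in a form adapted to $[0,\infty)$. For $\theta\in[-1,0]$ one has $\frac{e^{s\theta}-1}{s}=-\int_0^{-\theta}e^{-su}\,\d u$, so setting
\[
\beta(u)=\int_S\alpha_\infty(\theta)\mathbf 1_{\{0\le u\le-\theta\}}\,Q(\d\theta,\d z)\ge 0,\qquad u\ge 0,
\]
which is bounded (by $\|\alpha_\infty\|_\infty$), supported in $[0,1]$, and satisfies $\int_0^\infty\beta=\Gamma$, a direct computation gives $\Delta(s)=s\bigl(1+\hat\beta(s)\bigr)$, where $\hat\beta(s)=\int_0^\infty e^{-su}\beta(u)\,\d u$ is the Laplace transform. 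Hence $\frac{s}{\Delta(s)}=\frac{1}{1+\hat\beta(s)}$, and the asserted identity $\frac{i\xi}{\Delta(i\xi)}=1+\mathcal F K(\xi)$ is exactly the statement that $\delta+K$ is the convolution inverse of $\delta+\beta$ on $[0,\infty)$: since $\mathcal F K(\xi)=\hat K(i\xi)$, it reads $(1+\hat K)(1+\hat\beta)=1$.

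Next I would produce this inverse in an exponentially weighted Wiener algebra. Using Lemma \ref{LE-carac} together with the a priori bound $|\Im s|\le\int_S\alpha_\infty(\theta)e^{\theta\Re s}\,Q$ on the roots of $\Delta$ (which keeps all roots in a bounded set, away from the imaginary axis), I may shrink $\varepsilon_0>0$ so that $\Delta(s)\ne 0$ on $\{\Re s\ge-\varepsilon_0\}\setminus\{0\}$, the zero at $s=0$ being simple with $\Delta(s)/s\to 1+\Gamma\ne 0$. Set $\tilde\beta(u)=e^{\varepsilon_0 u}\beta(u)$, which still lies in $L^1(\R^+)$ since $\beta$ is bounded with compact support. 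In the convolution Banach algebra $\mathcal W=\mathbb C\delta\oplus L^1(\R^+)$ the Gelfand transform of $\delta+\tilde\beta$ is $w\mapsto 1+\hat\beta(w-\varepsilon_0)=\Delta(w-\varepsilon_0)/(w-\varepsilon_0)$ for $\Re w\ge 0$, together with the value $1$ at the character at infinity; by the previous step this transform is nonvanishing on the whole spectrum. By Gelfand theory (Wiener's theorem on the half-line) $\delta+\tilde\beta$ is therefore invertible in $\mathcal W$, so there is $g\in L^1(\R^+)$ with $(\delta+\tilde\beta)\ast(\delta+g)=\delta$. Unweighting, $K(u):=e^{-\varepsilon_0 u}g(u)$ (and $K\equiv 0$ on $(-\infty,0]$) is supported in $[0,\infty)$, is real-valued, satisfies $(\delta+\beta)\ast(\delta+K)=\delta$, and obeys the integrated bound $\int_0^\infty|K(u)|e^{\varepsilon_0 u}\,\d u=\|g\|_{L^1}<\infty$.

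It then remains to upgrade this integrated decay to the pointwise estimate $K(u)=O(e^{-\varepsilon_0 u})$, and to read off the Fourier identity. From $(\delta+\beta)\ast(\delta+K)=\delta$ one gets the renewal equation $K=-\beta-\beta\ast K$; since $\beta$ vanishes on $(1,\infty)$, for $u>1$ this gives $K(u)=-\int_{u-1}^{u}\beta(u-v)K(v)\,\d v$, whence
\[
|K(u)|\le\|\beta\|_\infty\int_{u-1}^{u}|K(v)|\,\d v\le\|\beta\|_\infty\,e^{-\varepsilon_0(u-1)}\int_{u-1}^{u}|K(v)|e^{\varepsilon_0 v}\,\d v\le M_0\,e^{-\varepsilon_0 u},
\]
while on $[0,1]$ boundedness of $K$ follows from $K\in L^1$ and $\beta\in L^\infty$ via the same equation. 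Finally, evaluating the resolvent identity $(1+\hat K)(1+\hat\beta)=1$ on the line $\Re s=0$, i.e.\ at $s=i\xi$, and using $1+\hat\beta(i\xi)=\Delta(i\xi)/(i\xi)$, yields $1+\mathcal F K(\xi)=i\xi/\Delta(i\xi)$, the claimed identity. The main obstacle is precisely the existence-with-decay step: on vertical lines the symbol $\tfrac{s}{\Delta(s)}-1$ decays only like $1/|s|$, so a direct inverse-Fourier/Bromwich estimate is not absolutely convergent and does not readily deliver pointwise bounds; the Wiener-algebra invertibility — which rests entirely on the nonvanishing of $\Delta$ furnished by Lemma \ref{LE-carac} — circumvents this, and the renewal bootstrap converts integrated decay into pointwise decay. (Equivalently one could define $K$ by the Bromwich integral and shift the contour to $\Re s\to+\infty$ to obtain $\operatorname{supp}K\subset[0,\infty)$ and to $\Re s=-\varepsilon_0+\delta$ to obtain the decay, but the slow decay makes the algebraic route cleaner.)
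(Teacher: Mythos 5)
Your proof is correct, and it takes a genuinely different route from the paper's. You argue \emph{multiplicatively}: the identity $(e^{s\theta}-1)/s=-\int_0^{-\theta}e^{-su}\,\d u$ gives the factorization $\Delta(s)=s\left(1+\hat\beta(s)\right)$ with $\beta\geq 0$ bounded and supported in $[0,1]$, so the Claim becomes the assertion that $\delta+K$ is the convolution inverse (resolvent kernel) of $\delta+\beta$ on the half-line; existence of this inverse in the exponentially weighted setting follows from Gelfand theory for the unitization of $L^1(\R^+)$ (characters being Laplace evaluations on $\{\Re w\geq 0\}$ together with the character at infinity), i.e.\ from the Paley--Wiener resolvent theorem of renewal theory, with the required non-vanishing supplied exactly by Lemma \ref{LE-carac}; causality ${\rm supp}\,K\subset[0,\infty)$ is then automatic, and the pointwise bound $|K(u)|\leq M_0e^{-\varepsilon_0 u}$ comes from your renewal bootstrap $K=-\beta-\beta\ast K$ exploiting ${\rm supp}\,\beta\subset[0,1]$. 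The paper instead argues \emph{additively}: with $\gamma=\int_S\alpha_\infty(\theta)Q(\d\theta,\d z)$ and $I(s)=\int_S\alpha_\infty(\theta)e^{s\theta}Q(\d\theta,\d z)$ it writes $s/\Delta(s)=1-\gamma/(s+\gamma)+I(s)/(s+\gamma)+R(s)$ with $R(s)=I(s)(I(s)-\gamma)/(\Delta(s)(s+\gamma))=O(1/|s|^2)$ on the half-plane; the two middle terms are explicit Fourier transforms of causal decaying exponentials built from $E(x)=e^{-x}\mathbf 1_{x>0}$, while $R$, having absolutely integrable boundary values, is inverted directly, its exponential decay obtained by shifting the contour to $\Re s=-\varepsilon$ and its support property obtained from the Suwa--Yoshino theorem \cite{20} on tempered distributions supported in a cone. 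Both arguments rest on the same key input, Lemma \ref{LE-carac}. Your route handles the slow $1/|s|$ decay of the symbol by pure algebra (nothing needs to be peeled off), gets the support of $K$ for free, and avoids the external citation \cite{20}, at the price of invoking the Gelfand/Wiener theory of $L^1(\R^+)$; the paper's route is more computational and self-contained on the Fourier side, and yields the semi-explicit formula $K=-\gamma E(\gamma\cdot)+\int_S E(\gamma\cdot+\theta)Q(\d\theta,\d z)+K_R$. Two harmless points worth making explicit in your write-up: at $\xi=0$ the identity is understood, as in the paper, with $i\xi/\Delta(i\xi)$ replaced by its limiting value $1/(1+\Gamma)=1/\Delta'(0)$; and your pointwise bound on $K$ holds almost everywhere (plus everywhere after modification), which is all that is needed for \eqref{esa} and the representation formula \eqref{form}.
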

Before proving this claim we first complete the proof of Lemma \ref{LE-repre}. Indeed, due to the above claim, one has
\begin{equation*}
\frac{ih\xi}{\Delta(ih\xi)}=1+\mathcal F K_h(\xi),\;\forall \xi\in\R,\;h>0.
\end{equation*}
Hence the function $\phi\in L^1(\R; X')$ is uniquely defined by \eqref{form} and Lemma \ref{LE-repre} follows.
\end{proof}

It remains to prove Claim \ref{claim-L1}.\\
\begin{proof}[Proof of Claim \ref{claim-L1}]
Recall that the function $\Delta:\mathbb C\to \mathbb C$ is defined by
$$
\Delta(s)=s-\int_S\alpha_\infty(\theta)[e^{s\theta}-1]\;Q(\d\theta,\d z), \;\forall s\in\mathbb C,
$$
and that there exists $\varepsilon>0$ such that $s\mapsto \frac{s}{\Delta(s)}$ is analytic on the half plane $\mathbb H:=\{s\in\mathbb C:\;\Re(s)>-\varepsilon\}$.\\
Next setting 
$$\gamma:=\int_S\alpha_\infty(\theta)Q(\d\theta,\d z) \text{ and }I(s)=\int_S\alpha_\infty(\theta)e^{s\theta}Q(\d\theta,\d z),$$
one has
\begin{equation*}
\begin{split}
G(s):=&\frac{s}{\Delta(s)}=\frac{s+I(s)}{s+\gamma}+\frac{s}{s+\gamma-I(s)}-\frac{s+I(s)}{(s+\gamma)}\\
=&\frac{s+I(s)}{s+\gamma}-\frac{I(s)(\gamma-I(s))}{\Delta(s)(s+\gamma)},\;\forall s\in \mathbb H.
\end{split}
\end{equation*}
Hence one obtains the following decomposition, for any $s\in\mathbb H$,
\begin{equation*}
G(s)=1-\frac{\gamma}{s+\gamma}+\frac{I(s)}{s+\gamma}+R(s)\text{ with }R(s):=\frac{I(s)(I(s)-\gamma)}{\Delta(s)(s+\gamma)}.
\end{equation*}
Up to reducing $\varepsilon>0$ if necessary, one may suppose that $0<\varepsilon<\gamma$ so that each term arising in the above decomposition is analytic of the half plane $\mathbb H$.

Next consider the function $E=E(x)$ defined by
\begin{equation}\label{function-E}
E(x)=\begin{cases} 0 &\text{ if }x\leq0,\\ e^{-x} &\text{ if $x>0$}.\end{cases}
\end{equation}
And first observe that, for all $\xi\in\R$, one has
\begin{equation*}
\frac{\gamma}{i\xi+\gamma}=\mathcal F\left[\gamma E(\gamma\cdot)\right](\xi).
\end{equation*}
Observe that one also has, for any $\xi\in\R$,
\begin{equation*}
\frac{I(i\xi)}{i\xi+\gamma}=\mathcal F\left[\int_S E(\gamma\cdot+\theta)Q(\d\theta,\d z)\right](\xi).
\end{equation*}
On the other hand, observe that the function $I=I(s)$ is bounded on the complex half plane $\mathbb H$. Indeed, one has 
\begin{equation*}
|I(s)|\leq M:=\int_S\alpha_\theta(\theta)e^{-\varepsilon\theta}Q(\d\theta,\d z),\;\forall s\in\mathbb H.
\end{equation*}
As a consequence, there exists some constant $C>0$ such that
\begin{equation*}\label{bound23}
|R(s)|\leq \frac{C}{1+|s|^2},\;\forall s\in\mathbb H.
\end{equation*}
As a consequence of this fast decay at infinity and since $R$ is analytic on $\mathbb H$, one
obtains, for all $x \in \R$ and all $\kappa\geq -\varepsilon$, that
\begin{equation*}
K_R (x) := \mathcal F^{-1} R(i\cdot)(x) =\frac{1}{2\pi}\int_{-\infty}^\infty e^{ix\xi} R(i\xi)\d \xi=\frac{1}{2\pi}\int_{-\infty}^\infty e^{\kappa x+ix\xi}R(\kappa+i\xi)\d \xi.
\end{equation*}
Hence choosing $\kappa=-\varepsilon$, we conclude that the function $K_R$ has an exponential
decay at infinity. In addition, because of the uniform bound in \eqref{bound23}, Theorem
4.4 in \cite{20} ensures that there exists a tempered distribution $T$ supported in
$[0,\infty)$ such that for all $\kappa>0$ and $\xi\in\R$ one has
\begin{equation*}
R \left(i \left(\xi+i(\kappa-\varepsilon)\right)\right) = \mathcal F \left(e^{-\kappa\cdot}T\right)(\xi).
\end{equation*}
Choosing $\kappa=\varepsilon>0$ yields $K_R = e^{-\varepsilon\cdot}T$ and one concludes that ${\rm supp}\, K_R \subset [0,\infty)$.

As a consequence of the above steps, one obtained that for each $\xi\in\R$,
\begin{equation*}
G\left(i\xi\right)=1+\mathcal F\left[-\gamma E(\gamma\cdot)+\int_S E(\gamma\cdot+\theta)Q(\d\theta,\d z)+K_R\right](\xi).
\end{equation*}
Finally, recalling the definition of $E$ in \eqref{function-E}, ${\rm supp}\, K_R\subset [0,\infty)$ and the exponential decay of $K_R$, this completes the proof of Claim \ref{claim-L1}. 

\end{proof}

\begin{remark}\label{REM-decay} Note that if $\psi\in \mathcal D(\R;X')$ then, for each $h>0$, the solution $\phi=T_h^{-1}\psi$ satisfies $\phi\in \mathcal C^\infty(\R;X')$. Moreover one has ${\rm supp}\,\phi\subset {\rm supp}\,\psi+[0,\infty)$ and, for each $k\geq 0$, $\phi^{(k)}(x)=O(e^{-\varepsilon_0 x/h})$ as $x\to\infty$. Hence $\phi$ belongs to the Schwartz class, namely $T_h^{-1}\mathcal D(\R;X')\subset \mathcal S(\R;X')$.  
\end{remark}

Before going to the proof of Theorem \ref{THEO-reg}, we need to derive further regularity estimates described below.

\begin{lemma}\label{LE-reg+}
Let $T>0$ be given. Then, for each $h>0$ and each $\psi\in \mathcal D(0,T;X')$, the function $\phi_h:=T_h^{-1}\psi$ satisfies the following estimates:
\begin{itemize}
\item[(i)] $\phi_h=0$ on $(-\infty,0]$, $\phi_h\in L^\infty(0,\infty;X')$ and
\begin{equation*}
\|\phi_h\|_{L^\infty(0,\infty;X')}\leq \left(1+\frac{M_0}{\varepsilon_0}\right)\|\psi\|_{L^\infty(0,T;X')};
\end{equation*}
\item[(ii)] for each $p\in [1,\infty)$ $\phi_h\in L^p(0,\infty;X')$ and 
\begin{align*}
&\|\phi_h\|_{L^p(0,\infty;X')}\\
&\leq \left(1+\frac{M_0}{\varepsilon_0}\right)\|\psi\|_{L^p(0,T;X')}\leq T^\frac{1}{p}\left(1+\frac{M_0}{\varepsilon_0}\right)\|\psi\|_{L^\infty(0,T;X')};
\end{align*}
\item[(iii)] for each $\alpha\in (0,1)$ and $p\in [1,\infty)$ one has
$$
\left[\phi_h\right]_{B^{\alpha,p}_\infty(0,\infty;X')}\leq \left(1+\frac{M_0}{\varepsilon_0}\right)[\psi]_{B^{\alpha,p}_\infty(\R,X')}.
$$
\end{itemize}
\end{lemma}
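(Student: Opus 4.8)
The plan is to reduce all three estimates to the explicit convolution representation of $T_h^{-1}$ provided by Lemma \ref{LE-repre} together with Young's convolution inequality. First I would observe that, since $\psi\in\mathcal D(0,T;X')$ extends by zero to an element of $L^1(\R;X')$, formula \eqref{form} reads $\phi_h=\psi+K_h*\psi$, where by \eqref{esa} the kernel $K_h(x)=h^{-1}K(h^{-1}x)$ is supported in $[0,\infty)$ and obeys the $h$-uniform bound $\|K_h\|_{L^1(\R)}\le M_0\varepsilon_0^{-1}$. The first assertion of (i), namely $\phi_h\equiv 0$ on $(-\infty,0]$, is then immediate from the support inclusion $\mathrm{supp}\,(K_h*\psi)\subset\mathrm{supp}\,K_h+\mathrm{supp}\,\psi\subset[0,\infty)$, as already recorded in Remark \ref{REM-decay}.

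The quantitative bounds all rest on the Banach-valued Young inequality $\|K_h*g\|_{L^p(\R;X')}\le\|K_h\|_{L^1(\R)}\|g\|_{L^p(\R;X')}$, valid for every $p\in[1,\infty]$ with scalar kernel $K_h$. Taking $p=\infty$ and $g=\psi$ gives $\|\phi_h\|_{L^\infty}\le(1+M_0\varepsilon_0^{-1})\|\psi\|_{L^\infty(0,T;X')}$, which is (i); repeating the computation for a general $p\in[1,\infty)$ yields the first inequality in (ii), and the second follows from $\|\psi\|_{L^p(0,T;X')}\le T^{1/p}\|\psi\|_{L^\infty(0,T;X')}$ since $\psi$ is supported in $(0,T)$. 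For (iii) the key point is that convolution commutes with translation, so that for each $s>0$ one has
\[
\phi_h(\cdot+s)-\phi_h=\bigl[\psi(\cdot+s)-\psi\bigr]+K_h*\bigl[\psi(\cdot+s)-\psi\bigr].
\]
Because $\phi_h$ is supported in $[0,\infty)$, I would bound the $L^p(0,\infty;X')$-norm of the left-hand side by its $L^p(\R;X')$-norm and apply Young's inequality once more to the right-hand side, obtaining $\|\phi_h(\cdot+s)-\phi_h\|_{L^p(0,\infty;X')}\le(1+M_0\varepsilon_0^{-1})\|\psi(\cdot+s)-\psi\|_{L^p(\R;X')}$; dividing by $s^\alpha$ and taking the supremum over $s>0$ produces the stated Besov semi-norm estimate.

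The argument is essentially routine once the kernel representation of Lemma \ref{LE-repre} is in hand, and the uniformity in $h$ comes for free from the single bound $\|K_h\|_{L^1}\le M_0\varepsilon_0^{-1}$. The only step requiring genuine care is the bookkeeping of supports in (iii): one must pass between the intervals $(0,T)$, $(0,\infty)$ and $\R$ — extending $\psi$ and $\phi_h$ by zero and using that the relevant functions vanish on $(-\infty,0]$ — without degrading the constant $1+M_0\varepsilon_0^{-1}$, and one must check that the shift $s>0$ interacts correctly with the restriction set $I_s=(0,\infty)$ appearing in the definition of the Besov semi-norm. Apart from this, no compactness or regularity input beyond Lemma \ref{LE-repre} is needed.
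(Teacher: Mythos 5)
Your proposal is correct and follows essentially the same route as the paper: the convolution representation $\phi_h=\psi+K_h\ast\psi$ from Lemma \ref{LE-repre}, the uniform bound $\|K_h\|_{L^1(\R)}\le M_0\varepsilon_0^{-1}$ fed into Young's inequality for (i) and (ii), and the commutation of translation with convolution for the Besov semi-norm in (iii). If anything, you are slightly more thorough than the paper, which leaves the support claim $\phi_h=0$ on $(-\infty,0]$ implicit (it follows from ${\rm supp}\,K_h\subset[0,\infty)$, as you note).
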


\begin{proof}
Recall that $\phi_h$ is defined by
$$
\phi_h=T_h^{-1}\psi=\psi+K_h\ast \psi,
$$
wherein $\ast$ denotes the convolution product.
Next one has
\begin{equation*}
\|\phi_h\|_{L^\infty(0,\infty;X')}\leq \left(1+\|K_h|_{L^1(\R)}\right)\|\psi\|_{L^\infty(0,T;X')}\leq \left(1+\frac{M_0}{\varepsilon_0}\right)\|\psi\|_{L^\infty(0,T;X')}.
\end{equation*}
This completes the proof of $(ii)$.\\
Next, for $p\in [1,\infty)$, one also has
\begin{align*}
&\|\phi_h\|_{L^p(0,\infty;X')}\\
&\leq \left(1+\|K_h|_{L^1(\R)}\right)\|\psi\|_{L^p(0,T;X')}\leq T^\frac{1}{p}\left(1+\frac{M_0}{\varepsilon_0}\right)\|\psi\|_{L^\infty(0,T;X')},
\end{align*}
that proves $(ii)$.\\
Finally observe that for each $s>0$ one has
\begin{equation*}
\begin{split}
&\|\phi_h(\cdot+s)-\phi_h\|_{L^p(0,\infty;X')}\\
&\qquad\qquad\qquad\leq \|\psi(\cdot+s)-\psi\|_{L^p(\R;X')}+\|K_h\ast(\psi(\cdot+s)-\psi)\|_{L^p(\R;X')}\\
&\qquad\qquad\qquad\leq \left(1+\frac{M_0}{\varepsilon_0}\right)\|\psi(\cdot+s)-\psi\|_{L^p(\R;X')},
\end{split}
\end{equation*}
and this completes the proof of $(iii)$.
\end{proof}

We are now in position to prove Theorem \ref{THEO-reg}.\\
\begin{proof}[Proof of Theorem \ref{THEO-reg}]
Let $T>0$ and $\psi\in \mathcal D(0,T;X')$ be given. Then for each $h>0$ set $\phi_h:=T_h^{-1}\psi\in \mathcal S(\R;X')$ (see Remark \ref{REM-decay}).

One the one hand, we infer from Lemma \ref{LE29}, \ref{LE30} and \ref{LE-reg+} that, for each $h>0$, one has
$$
[\phi_h]_{B^{\alpha,1}_\infty(0,\infty;X')}\leq \frac{2}{\alpha}\left(1+\frac{M_0}{\varepsilon_0}\right)\left[[\psi]_{W^{\alpha,1}(0,T;X')}+\frac{4T^{1-\alpha}}{\alpha(1-\alpha)}\|\psi\|_{L^\infty(0,T;X')}\right].
$$
Next, recalling that $\phi_h\in \mathcal S(\R,X')$ and $T_h\partial_t=\partial_t T_h$, applying $\phi=\phi_h$ into \eqref{eq-35} ensues that there exists some constant $\widehat M>0$ (depending upon $T$ but independent from $\psi$ and $h$) such that for all $h>0$ and any $\psi\in \mathcal D(0,T;X')$ one has
$$
\left|\langle \partial_t\psi,u^h\rangle_{\mathcal S,\mathcal S'}\right|\leq \widehat M\left[\|\psi\|_{L^\infty(0,T;X')}+\|\psi\|_{W^{\alpha,1}(0,T;X')}\right].
$$
Now let $s \in [\alpha, 1)$ and $p \in (1, \infty)$ be given such that $ps > 1$. Then let us recall
that (see for instance Simon in \cite{19}) that the following continuous embedding
holds true:
$$
W^{s,p}(0,T;X')\hookrightarrow L^\infty(0,T;X')\cap W^{\alpha,1}(0,T;X').
$$
Hence there exists some positive constant, still denoted by $\widehat M$, depending on $T$ such that for all $h>0$ and $\psi\in\mathcal D(0,T;X')$ one has
$$
\left|\langle \partial_t\psi,u^h\rangle_{\mathcal S,\mathcal S'}\right|\leq \widehat M\|\psi\|_{W^{s,p}(0,T;X')}.
$$
As a consequence $\partial_t u^h$, the distribution derivative of $u^h$, satisfies 
$$
\partial_t u^h\in \left(W_0^{s,p}(0,T;X')\right)'=W^{-s,p'}(0,T;X)
$$ and
\begin{equation*}
\|\partial_t u^h\|_{W^{-s,p}(0,T;X)}\leq \widehat M,\;\forall h>0.
\end{equation*}
Hence, for each $T>0$ there exists some constant $M=M(T)$ such that for all $h>0$ one has
\begin{equation*}
u^h\in W^{1-s,p'}(0,T;X)\text{ and }\left[u^h\right]_{W^{1-s,p'}(0,T;X)}\leq M(T).
\end{equation*}
This completes the proof of Theorem \ref{THEO-reg}.
\end{proof}

\subsection{Proof of Theorem \ref{THEO-compact}}

In this section we complete the proof of Theorem \ref{THEO-compact}. To that aim we apply the abstract result derived above in Theorem \ref{THEO-reg}.\\
Our proof will be split into two steps. In the first step we derive a fractional Sobolev regularity estimate of
$\{p^\lambda\}_{\lambda>0}$ using Theorem \ref{THEO-reg}. In the second step we bootstrap this estimates
to conclude the proof of Theorem \ref{THEO-compact}.

In a first step we derive the following lemma.
\begin{lemma}\label{LE-sob}
For each $\sigma>2+\frac{N}{2}$, each $s\in \left[\frac{1}{2},1\right)$ and each $p>\frac{1}{s}$, the family of maps $\{t\mapsto p^\lambda(t,\d x)\}_{\lambda\geq 1}$ is bounded in $W_{\rm loc}^{1-s,p'}\left(0,\infty;H^{-\sigma}(\R^N)\right)$. This means that for each $T>0$ there exists some constant $M=M(\sigma,T,s,p)>0$ such that
\begin{equation*}
\left\|p^\lambda\right\|_{W^{1-s,p'};H^{-\sigma}(\R^N))}\leq M,\;\forall \lambda\geq 1.
\end{equation*} 

\end{lemma}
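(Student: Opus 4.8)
The plan is to deduce Lemma \ref{LE-sob} from the abstract Sobolev estimate of Theorem \ref{THEO-reg}, applied with the reflexive Banach space $X=H^{-\sigma}(\R^N)$, whose dual is $X'=H^\sigma(\R^N)$, and with the parameters $h=\lambda^{-2}$ and $u^h=p^\lambda$. The role of the hypothesis $\sigma>2+\frac{N}{2}$ is the Sobolev embedding $H^\sigma(\R^N)\hookrightarrow \C_b^2(\R^N)$, which lets me control $\phi$, $\nabla\phi$, $D^2\phi$ and point evaluations of a test function $\phi(t,\cdot)$ by $\|\phi(t,\cdot)\|_{H^\sigma}$; dually, every probability measure lies in $H^{-\sigma}(\R^N)$ with norm bounded by a universal constant, so that $t\mapsto p^\lambda(t,\d x)$ genuinely defines an element of $\mathcal S'(\R^+;H^{-\sigma})$ and, over a bounded time interval, enjoys a uniform $L^{p'}(0,T;H^{-\sigma})$ bound. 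Since the nonlocal operator $T_{1/\lambda^2}$ of \eqref{310} coincides with the operator $T_h$ of \eqref{Th} for $h=\lambda^{-2}$ (and $\mathcal S(\R^+;H^\sigma)$ test functions are admissible in \eqref{310}), the left-hand side of \eqref{310} is exactly $\langle T_h\partial_t\phi,p^\lambda\rangle$. Everything thus reduces to verifying the structural estimate \eqref{eq-35} for the right-hand side of \eqref{310}, uniformly for $\lambda\ge 1$, with the choice $\alpha=\tfrac12$.

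I would bound the four terms of \eqref{310} separately. The point-evaluation term $\int_{\R^N}\phi(0,x)p^\lambda(0,\d x)$, the term $\mathcal T_2^\lambda[\phi]$, and both pieces of $\mathcal R^\lambda[\phi]$ are soft: they only require the total-mass identity $\int_{\R^N}p^\lambda(t,\d x)=1$, the embedding $H^\sigma\hookrightarrow \C_b^2$, and the integrability $|z|^2\in L^1(S;Q)$ from Assumption \ref{ASS1}$(ii)$. For $\mathcal T_2^\lambda$, which carries no $\lambda$ prefactor, the explicit $\Phi^\lambda$ of Lemma \ref{LE24} gives $|\mathcal T_2^\lambda[\phi]|\le C\|\phi\|_{L^1(\R^+;H^\sigma)}$, using $\|D^2\phi(t,\cdot)\|_{L^\infty}\le C\|\phi(t,\cdot)\|_{H^\sigma}$ together with $\int_S|z+G(\lambda^2 t,\theta)|^2Q(\d\theta,\d z)\le C$ (the boundedness of $G$ from \eqref{asympt}). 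For $\mathcal R^\lambda$ the crucial mechanism is that the prefactor $\lambda^2$ is absorbed either by the exponential factor $|\alpha(\lambda^2 t,\theta)-\alpha_\infty(\theta)|\le Me^{-\beta\lambda^2 t}$ of Assumption \ref{ASS1}$(i)$, which integrates to $\lambda^2\int_0^\infty e^{-\beta\lambda^2 t}\,\d t=\beta^{-1}$, or by the length $O(\lambda^{-2})$ of the time window $[\theta/\lambda^2,0]$; after shifting $t\mapsto t+\theta/\lambda^2$ so that $p^\lambda$ is always evaluated at its own time and the shift falls on $\phi$, each piece is controlled by $C\|\phi\|_{L^\infty(\R^+;H^\sigma)}$.

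The delicate term is $\mathcal T_1^\lambda[\phi]$, since it carries a single power of $\lambda$ that does not decay by itself. Here I would reproduce the algebraic cancellation already used in Step 1 of the proof of Proposition \ref{PROP20}: after substituting $t\mapsto t+\theta/\lambda^2$ in the second integral and splitting $\nabla\phi(\cdot-\theta/\lambda^2)=\nabla\phi(\cdot)+[\nabla\phi(\cdot-\theta/\lambda^2)-\nabla\phi(\cdot)]$, the $\nabla\phi(\cdot)$ contribution recombines into $\lambda\int_0^\infty \mathcal J_1(\lambda^2 t)\int_{\R^N}\nabla\phi(t,x)p^\lambda(t,\d x)\,\d t$ with $\mathcal J_1(t)=H'(t)+\int_S\alpha(t-\theta,\theta)[z+G(t-\theta,\theta)]Q(\d\theta,\d z)$. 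By \eqref{asympt} and the identity of Remark \ref{REM-identity} one has $\mathcal J_1(t)=O(e^{-\beta t})\in L^1(\R^+)$, so the change of variables $u=\lambda^2 t$ makes this contribution of size $\lambda^{-1}\|\mathcal J_1\|_{L^1}\|\nabla\phi\|_{L^\infty(\R^+;L^\infty)}\le C\|\phi\|_{L^\infty(\R^+;H^\sigma)}$. The leftover increment term is estimated by the total-mass bound, $\int_S|\alpha(\cdot)||z+G|\,Q\le C$, and the fractional time regularity
\begin{equation*}
\|\nabla\phi(\cdot-\tfrac{\theta}{\lambda^2})-\nabla\phi\|_{L^1(\R^+;L^\infty)}\le\Big(\tfrac{-\theta}{\lambda^2}\Big)^{\!\alpha}[\nabla\phi]_{B^{\alpha,1}_\infty(\R^+;L^\infty)}\le \lambda^{-2\alpha}\,C\,[\phi]_{B^{\alpha,1}_\infty(\R^+;H^\sigma)},
\end{equation*}
so this contribution is of size $\lambda^{1-2\alpha}[\phi]_{B^{\alpha,1}_\infty(\R^+;H^\sigma)}$. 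Choosing $\alpha=\tfrac12$ makes $1-2\alpha=0$, which is exactly why the threshold $s\ge\tfrac12$ appears in the statement, and the seminorm $[\phi]_{B^{1/2,1}_\infty(\R^+;H^\sigma)}$ is precisely the third quantity in \eqref{eq-35}. Collecting the four bounds yields \eqref{eq-35} with $\alpha=\tfrac12$ uniformly in $\lambda\ge1$, whence Theorem \ref{THEO-reg} gives $p^\lambda\in W^{1-s,p'}_{\rm loc}(\R^+;H^{-\sigma})$ with $[p^\lambda]_{W^{1-s,p'}(0,T;H^{-\sigma})}\le C(T)$ for every $s\in[\tfrac12,1)$ and $p>1/s$; adding the uniform $L^{p'}(0,T;H^{-\sigma})$ bound noted above completes the proof. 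The main obstacle is exactly this power counting in $\mathcal T_1^\lambda$: one must exploit the cancellation producing the exponentially small $\mathcal J_1$ and, for the residual increment, trade the factor $\lambda$ against the gain $\lambda^{-2\alpha}=\lambda^{-1}$ obtained from fractional (rather than Lipschitz) time regularity over a window of length $\lambda^{-2}$, since the admissible norms in \eqref{eq-35} forbid any use of $\partial_t\phi$.
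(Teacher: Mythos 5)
Your proposal is correct and takes essentially the same route as the paper: it reduces the statement to Theorem \ref{THEO-reg} with $X=H^{-\sigma}(\R^N)$, $u^h=p^\lambda$ and $h=\lambda^{-2}$, obtains the uniform $L^\infty(0,\infty;H^{-\sigma}(\R^N))$ bound from the unit mass of $p^\lambda$ together with the embedding $H^\sigma(\R^N)\hookrightarrow W^{2,\infty}(\R^N)$, and verifies the structural estimate \eqref{eq-35} with $\alpha=\tfrac12$ term by term in \eqref{310}, with $\mathcal T_2^\lambda$ and $\mathcal R^\lambda$ handled exactly as in the paper. The one delicate term, $\mathcal T_1^\lambda$, is treated by the same mechanism as well --- the cancellation coming from Remark \ref{REM-identity} and \eqref{asympt} plus the Besov-$\tfrac12$ control of the $O(\lambda^{-2})$ time shift, which is precisely where $s\ge\tfrac12$ enters --- your bookkeeping via $\mathcal J_1$ being only a cosmetic variant of the paper's $I_1^\lambda+I_2^\lambda+I_3^\lambda$ decomposition.
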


\begin{proof}
Fix $\sigma>2+\frac{N}{2}$ so that the following continuous embedding holds true:
\begin{equation}\label{embedding}
H^\sigma(\R^N)\hookrightarrow W^{2,\infty}(\R^N).
\end{equation}
Now observe that the above continuous embedding ensures that for each $\lambda>0$ and each $\phi\in H^\sigma(\R^N)$, the maps $t\mapsto \int_{\R^N}\phi(x) p^\lambda(t,\d x)$ is measurable. Hence, since $H^{-\sigma}(\R^N)=\left(H^\sigma(\R^N)\right)'$ is separable then the map $t\mapsto p^\lambda(t,\d x)$ is strongly measurable from $[0,\infty)$ into $H^{-\sigma}(\R^N)$.
Moreover, there exists some constant $C=C(\sigma)>0$ such that for any $t\geq 0$, $\lambda>0$ and $\phi\in H^\sigma(\R^N)$:
\begin{equation*}
\left|\int_{\R^N}\phi(x)p^\lambda(t,\d x)\right|\leq \|\phi\|_{L^\infty(\R^N)}\leq C(\sigma)\|\phi\|_{H^\sigma(\R^N)}.
\end{equation*}
Hence, one first conclude that the family of measures $\{p^\lambda\}_{\lambda>0}$ is bounded in $L^\infty\left(0,\infty;H^{-\sigma}(\R^N)\right)$, namely
\begin{equation}\label{Linfty}
\left\|p^\lambda\right\|_{L^\infty(0,\infty;H^{-\sigma}(\R^N)}\leq  C(\sigma),\;\forall \lambda>0.
\end{equation}
To go further, recall that $p^\lambda$ satisfies \eqref{310}. Hence we will apply Theorem \ref{THEO-reg} to this formulation. To that aim we estimate the different terms arising in the right hand side of \eqref{310}.

\vspace{1ex}
\noindent {\bf \underline{Estimate for $\mathcal T_1^\lambda$}:}\\ 
Recall that $\mathcal T_1^\lambda$ is defined in Lemma \ref{LE24}. 
Note that, due to \eqref{asympt}, $\mathcal T_1^\lambda$ re-writes as follows, for all $\phi\in \mathcal S(\R^+;H^\sigma(\R^N))$,
\begin{equation*}
\begin{split}
&\mathcal{T}_1^\lambda[\phi]=I_1^\lambda[\phi]-\lambda\int_{0}^\infty \int_{\R^N}\nabla \phi(t,x){\bf K}p^\lambda(t,\d x)\d t\\
&+\lambda \int_0^\infty \int_S \alpha_\infty(\theta)\left[z+\theta {\bf K}\right]\left[\int_{\R^N}\nabla \phi(t,x)p^\lambda\left(t+\frac{\theta}{\lambda^2},\d x\right)\right]Q(\d\theta,\d z)\d t.
\end{split}
\end{equation*}
Herein $I_1^\lambda[\phi]$ is a remainder that satisfies that there exists some constant $M>0$ such that for all $\lambda>0$ and $\phi\in \mathcal S(\R^+;H^\sigma(\R^N))$ one has
$$
\left|I_1^\lambda[\phi]\right|\leq \frac{M}{\lambda}\|\phi\|_{L^1(0,\infty;W^{1,\infty}(\R^N))}\leq \frac{M}{\lambda}\|\phi\|_{L^1(0,\infty;H^\sigma(\R^N))}.
$$
Next we re-write $\mathcal T_1^\lambda$ as follows:
\begin{equation*}
\begin{split}
&\mathcal{T}_1^\lambda[\phi]=I^\lambda[\phi]-\lambda\int_{0}^\infty \int_{\R^N}\nabla \phi(t,x){\bf K}p^\lambda(t,\d x)\d t\\
&+\lambda \int_0^\infty \int_S \alpha_\infty(\theta)\left[z+\theta {\bf K}\right]\left[\int_{\R^N}\nabla \phi(s-\frac{\theta}{\lambda^2},x)p^\lambda\left(s,\d x\right)\right]Q(\d\theta,\d z)\d s\\
&+I_2^\lambda[\phi]
\end{split}
\end{equation*}
with 
$$
I_2^\lambda[\phi]=\lambda \int_S \int_{0}^{-\frac{\theta}{\lambda^2}}\alpha_\infty(\theta)\left[z+\theta {\bf K}\right]\left[\int_{\R^N}\nabla \phi(t,x)p^\lambda\left(t+\frac{\theta}{\lambda^2},\d x\right)\right]\d t Q(\d\theta,\d z).
$$
And note that, as for $I_1^\lambda$, there exists $M>0$
such that for all $\lambda>0$ and $\phi\in \mathcal S(\R^+;H^\sigma(\R^N))$ one has
$$
\left|I_2^\lambda[\phi]\right|\leq \frac{M}{\lambda}\|\phi\|_{L^1(0,\infty;W^{1,\infty}(\R^N))}\leq \frac{M}{\lambda}\|\phi\|_{L^1(0,\infty;H^\sigma(\R^N))}.
$$
Next recalling the identity for ${\bf K}$ in Remark \ref{REM-identity}, $\mathcal T_1^\lambda$ re-writes, for any $\phi\in \mathcal S(\R^+;H^\sigma(\R^N))$ as
\begin{equation*}
\begin{split}
&\mathcal{T}_1^\lambda[\phi]=I^\lambda[\phi]+I_2^\lambda[\phi]+I_3^\lambda[\phi]\text{ with }\\
&I_3^\lambda[\phi]=\lambda \int_0^\infty \int_S \alpha_\infty(\theta)\left[z+\theta {\bf K}\right]\\
&\qquad\qquad\left[\int_{\R^N}\left(\nabla \phi(s-\frac{\theta}{\lambda^2},x)-\nabla \phi(s,x)\right)p^\lambda\left(s,\d x\right)\right]Q(\d\theta,\d z)\d s.
\end{split}
\end{equation*}
Hence, one obtains
\begin{align*}
&\left|I_3^\lambda[\phi]\right|\leq \int_S \sqrt{-\theta}\alpha_\infty(\theta)\left[|z|+|\theta| |{\bf K}|\right]Q(\d\theta,\d z)\\
&\qquad\qquad\qquad\qquad\sup_{h>0} h^{-\frac{1}{2}}\left\|\phi(\cdot+h,\cdot)-\phi\right\|_{L^1(0,\infty;W^{1,\infty}(\R^N))}.
\end{align*}
As consequence, we obtains that there exists some constant, still denoted by $M>0$, such that for any $\lambda>0$ and $\phi\in \mathcal S(\R^+;H^\sigma(\R^N))$
\begin{equation*}
\left|\mathcal T_1^\lambda[\phi]\right|\leq M\left[\lambda^{-1}\|\phi\|_{L^1(0,\infty;H^\sigma(\R^N))}+\|\phi\|_{B_\infty^{\frac{1}{2},1}\left(0,\infty;H^\sigma(\R^N)\right)}\right].
\end{equation*}

\vspace{1ex}

\noindent {\bf \underline{Estimate for $\mathcal T_2^\lambda$}:}\\ 
Recall that $\mathcal T_2^\lambda$ is defined in Lemma \ref{LE24}. Hence it directly follows from this definition that there exists some constant $C>0$ such that, for all $\phi\in \mathcal S(\R^+;H^\sigma(\R^N))$ and $\lambda>0$, one has
\begin{equation*}
|\mathcal T^\lambda_2[\phi]\leq C\|\phi\|_{L^1(0,\infty;W^{2,\infty}(\R^N)}\leq C\|\phi\|_{L^1(0,\infty;H^\sigma(\R^N))}.
\end{equation*}

\vspace{1ex}
\noindent {\bf \underline{Estimate for $\mathcal R^\lambda$}:}\\ 
Recalling Assumption \ref{ASS1} $(i)$, it readily follows that there exists some constant $C>0$ such that for all $\lambda>0$, $\phi\in \mathcal S(\R^+;H^\sigma(\R^N))$ one has
\begin{equation*}
\left|\mathcal R^\lambda[\phi]\right|\leq C\|\phi\|_{L^\infty(\R^+\times \R^N)}\leq C\|\phi\|_{L^\infty(0,\infty;H^\sigma(\R^N))}.
\end{equation*}

\vspace{1ex}
\noindent {\bf \underline{Conclusion}:}\\ 
As a consequence of the above estimates, there exists some constant $C > 0$ such
that for all $\lambda\geq 1$ and $\phi\in\mathcal S\left(\R^+;H^\sigma(\R^N)\right)$ one has:
\begin{align*}
&\left|\left\langle T_{\frac{1}{\lambda^2}}\partial_t\phi,p^\lambda\right\rangle\right|\\
&\leq C\left[|\phi\|_{L^1(0,\infty;H^\sigma(\R^N))}+\|\phi\|_{B_\infty^{\frac{1}{2},1}\left(0,\infty;H^\sigma(\R^N)\right)}+\|\phi\|_{L^\infty(0,\infty;H^\sigma(\R^N))}\right].
\end{align*}
Now since $H^\sigma (\R^N ) = \left(H^{-\sigma}(R^N)\right)'$ is a separable reflexive Banach space, Theorem \ref{THEO-reg} applies and ensures that for each $T > 0$ there exists $M_T > 0$ such
that
\begin{equation*}
\left[p^\lambda\right]_{W^{1-s,p'} (0,T ;H^{-\sigma}(R^N ))}\leq  M_T,\;\forall \lambda\geq 1.
\end{equation*}
Finally the uniform bound \eqref{Linfty} and the above semi-norm estimate completes
the proof of Lemma \ref{LE-sob}.
\end{proof}

Before going to the proof of Theorem \ref{THEO-compact} we will first prove the following regularity lemma
\begin{lemma}\label{LE-reg}
Fix $\sigma>2+\frac{N}{2}$. Let $0<\varepsilon<T$ and $p>2$ be given. Then there exists $M>0$ such that for any $\psi\in \mathcal D(\varepsilon,T;H^\sigma(\R^N))$ and all $\lambda$ large enough the following estimate holds true
\begin{equation*}
\left|\int_0^\infty\int_{\R^N}\partial_t \psi(t,x)p^\lambda(t,\d x)\d t\right|\leq M\|\psi\|_{L^p(\varepsilon,T;H^\sigma(\R^N))}.
\end{equation*}
\end{lemma}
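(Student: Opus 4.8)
The plan is to test the reformulated equation \eqref{310} against the function $\phi=T_{1/\lambda^2}^{-1}\psi$. This is legitimate because, by Remark \ref{REM-decay}, the inverse operator maps $\mathcal D(\varepsilon,T;H^\sigma(\R^N))$ into the Schwartz class $\mathcal S(\R;H^\sigma(\R^N))$, and because $T_{1/\lambda^2}$ commutes with $\partial_t$ we have $T_{1/\lambda^2}\partial_t\phi=\partial_t T_{1/\lambda^2}\phi=\partial_t\psi$, so the left-hand side of \eqref{310} collapses to exactly the quantity $\int_{\R^+\times\R^N}\partial_t\psi\,p^\lambda(t,\d x)\d t$ we wish to estimate. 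The support hypothesis then does most of the cleanup: Remark \ref{REM-decay} gives $\mathrm{supp}\,\phi\subset[\varepsilon,\infty)$, so the initial term $\int_{\R^N}\phi(0,x)p^\lambda(0,\d x)$ vanishes identically; and once $\lambda^{-2}<\varepsilon$, the same support property, together with the exponential bound $|\alpha(\lambda^2 t,\theta)-\alpha_\infty(\theta)|\le M e^{-\beta\lambda^2 t}\le M e^{-\beta\lambda^2\varepsilon}$ from Assumption \ref{ASS1}, forces $\mathcal R^\lambda[\phi]$ to be negligible (its first piece carries $\lambda^2 e^{-\beta\lambda^2\varepsilon}\to0$, and its second piece vanishes outright since the test function is evaluated at times below $\varepsilon$). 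Thus everything reduces to bounding $\mathcal T_1^\lambda[\phi]$ and $\mathcal T_2^\lambda[\phi]$ by $\|\psi\|_{L^p(\varepsilon,T;H^\sigma)}$.

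The term $\mathcal T_2^\lambda$ is harmless: from its definition in Lemma \ref{LE24} and the embedding $H^\sigma\hookrightarrow W^{2,\infty}(\R^N)$ one has $|\mathcal T_2^\lambda[\phi]|\le C\|\phi\|_{L^1(0,\infty;H^\sigma)}$, and Lemma \ref{LE-reg+}(ii) converts this into $C\|\psi\|_{L^1(\varepsilon,T;H^\sigma)}\le C(T-\varepsilon)^{1/p'}\|\psi\|_{L^p(\varepsilon,T;H^\sigma)}$ by H\"older on the bounded interval. The genuine difficulty is $\mathcal T_1^\lambda$, which carries a prefactor $\lambda$. Using the expansions \eqref{asympt} and the identity for ${\bf K}$ in Remark \ref{REM-identity}, and discarding the exponentially small contributions permitted by $\mathrm{supp}\,\phi\subset[\varepsilon,\infty)$, I would reduce $\mathcal T_1^\lambda[\phi]$ to the single main term $I_3^\lambda[\phi]$ of the proof of Lemma \ref{LE-sob}; the novelty here is that instead of leaving the increment on the test function (which produced a Besov seminorm of $\phi$, uncontrollable by an $L^p$ norm) I shift it onto the measure by the substitution $s\mapsto s+\theta/\lambda^2$, rewriting it as
$$
\lambda\int_0^\infty\!\!\int_S\alpha_\infty(\theta)\big[z+\theta{\bf K}\big]\Big\langle\nabla\phi(s,\cdot),\,p^\lambda\big(s+\tfrac{\theta}{\lambda^2},\cdot\big)-p^\lambda(s,\cdot)\Big\rangle\,Q(\d\theta,\d z)\,\d s,
$$
up to boundary contributions of order $\lambda^{-1}$.

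This is where the bootstrap enters, and it is the heart of the argument. Lemma \ref{LE-sob} with the choice $s=\tfrac12$ — admissible precisely because $p>2=1/s$, which is exactly why the hypothesis $p>2$ is imposed — yields a uniform bound on $[p^\lambda]_{W^{1/2,p'}(0,T';H^{-\sigma})}$, and the Besov embedding of Lemma \ref{LE30} then gives $\|p^\lambda(\cdot+\eta,\cdot)-p^\lambda\|_{L^{p'}(0,T';H^{-\sigma})}\le C\eta^{1/2}$. Taking $\eta=|\theta|/\lambda^2\le\lambda^{-2}$ produces the crucial gain $\|p^\lambda(\cdot+\theta/\lambda^2,\cdot)-p^\lambda\|_{L^{p'}(H^{-\sigma})}\le C\lambda^{-1}$. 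A H\"older estimate in $t$ against $\nabla\phi$ then cancels the outer factor $\lambda$ exactly, leaving a bound of the form $C\|\nabla\phi\|_{L^{p}(H^{\sigma})}$, which passes to a bound in terms of $\psi$ through Lemma \ref{LE-reg+}. Collecting the three contributions and letting $\lambda$ be large yields the stated inequality.

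The step I expect to be the main obstacle is exactly this control of $\mathcal T_1^\lambda$: the only available way to tame its $\lambda$-weighted nonlocal-in-time structure is to \emph{spend} the fractional regularity already secured in Lemma \ref{LE-sob}, and the arithmetic is tight — the increment exponent $1-s=\tfrac12$ must beat the prefactor $\lambda$, which pins down $s=\tfrac12$ and hence $p>2$. The delicate bookkeeping is the matching of Sobolev indices: the single spatial derivative in $\mathcal T_1^\lambda$ must be paired against the negative-order time-increment estimate for $p^\lambda$, so one has to balance the derivative on $\phi$ against the index of the regularity estimate, using throughout that each $p^\lambda(t,\cdot)$, being a probability measure, lies in $H^{-r}(\R^N)$ for every $r>N/2$ with a uniform bound, as already recorded in \eqref{Linfty}.
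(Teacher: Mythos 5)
Your proposal is correct and follows essentially the same route as the paper's own proof: the paper likewise tests \eqref{310} with $\phi_\lambda=T_{1/\lambda^2}^{-1}\psi$, uses $\phi_\lambda\equiv 0$ on $[0,\varepsilon]$ (from the support of the kernel $K_{1/\lambda^2}$) to remove the initial term and the second piece of the remainder, bounds $\mathcal T_2^\lambda[\phi_\lambda]$ through Lemma \ref{LE-reg+}, absorbs $\mathcal R_1^\lambda[\phi_\lambda]$ with the factor $\lambda^2e^{-\beta\varepsilon\lambda^2}$, and controls $\mathcal T_1^\lambda[\phi_\lambda]$ by exactly your mechanism, namely the bound $\left[p^\lambda\right]_{B^{1/2,p'}_\infty(0,T';H^{-\sigma}(\R^N))}\leq M$ deduced from Lemmas \ref{LE-sob} and \ref{LE30} with $s=\tfrac12$ (whence the hypothesis $p>2$), so that the time-increment gain $\left(|\theta|/\lambda^2\right)^{1/2}$ cancels the prefactor $\lambda$. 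Your write-up is in fact more explicit than the paper's, which compresses this last step into ``using this bound and similar computations as the ones given in the above section''; the Sobolev-index bookkeeping you flag as the delicate point (pairing $\nabla\phi_\lambda$, which a priori lies only in $H^{\sigma-1}$, against increments of $p^\lambda$ estimated in $H^{-\sigma}$) is left equally implicit in the paper.
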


\begin{proof}
Let $\psi\in \mathcal D(\varepsilon, T; H^\sigma(\R^N))$ be given. Let $\phi_\lambda\in \mathcal S(\R^+; H^\sigma (\R^N ))$ be the
function defined by
$$
T_{\frac{1}{\lambda^2}}\phi_\lambda=\psi,\;\forall \lambda>0.
$$
Here $T_{\frac{1}{\lambda^2}}$ is the operator defined in \eqref{Th} with $h = 1/\lambda^2$.
Next note that, due to Lemma \ref{LE-repre}, for all $t\in \geq 0$ one has
\begin{equation}\label{phi-lambda}
\phi_\lambda(t, \cdot) = \psi(t,\cdot) +\int_0^\infty K_{1/\lambda^2}(s)\psi(t-s,\cdot)\d s.
\end{equation}
Hence $\phi_\lambda(t,\cdot)=0$ for all $t\in [0,\varepsilon]$.\\
Next using the same notations as above one obtains due to \eqref{phi-lambda} that the following decomposition holds true:
\begin{equation}\label{eq-44}
\int_{\R^+}\int_{\R^N}\partial_t\psi p^\lambda(t,\d x)\d t=-\mathcal T_1^\lambda[\phi_\lambda]-\mathcal T_2^\lambda[\phi_\lambda]+\mathcal R_1^\lambda[\phi_\lambda].
\end{equation}
Next we shall estimate each of these terms.
We start by the second one by observing that due to the above computations
and Lemma \ref{LE-reg+} $(ii)$, there exists some constant $M > 0$ such that
$$
\left|\mathcal T_2^\lambda[\phi_\lambda]\right|\leq M_2\|\psi\|_{L^1(\varepsilon,T;H^\sigma(\R^N))},\;\forall \lambda>0.
$$
We now estimate the third term arising in \eqref{eq-44}. To that aim we first choose $\lambda>\frac{1}{\varepsilon}$ so that, recalling that $\phi_\lambda(t,\cdot)=0$ for $t\in [0,\lambda^{-1}]\subset [0,\varepsilon]$, $\mathcal R_1^\lambda[\phi_\lambda]$ re-writes
\begin{align*}
\mathcal R_1^\lambda[\phi_\lambda]=&\lambda^2\int_{\varepsilon}^\infty \int_S \left[\alpha(\lambda^2 t,\theta)-\alpha_\infty(\theta)\right]\\
&\qquad\qquad\int_{\R^N}\phi_\lambda(t,x)\left[p^\lambda(t,\d x)-p^\lambda\left(t+\frac{\theta}{\lambda^2},\d x\right)\right] Q(\d\theta,\d z)\d t;
\end{align*}
Recalling \eqref{ASS1} $(i)$, there exists $M>0$ such that for all $\lambda>1/\varepsilon$ one has
\begin{equation*}
\left|\mathcal R_1^\lambda[\phi_\lambda]\right|\leq\lambda^2 Me^{-\beta \varepsilon\lambda^2}\|\phi_\lambda\|_{L^1(\varepsilon,T;H^\sigma(\R^N))}\leq \lambda^2 Me^{-\beta \varepsilon\lambda^2}\|\psi\|_{L^1(\varepsilon,T;H^\sigma(\R^N))}.
\end{equation*}
It remains to estimate the first term in \eqref{eq-44}, namely $\mathcal T^\lambda_1[\phi_\lambda]$. To that aim we will make use of the estimate provided in Lemma \ref{LE-sob} above.
First note that coupling the estimate in this lemma together with those in Lemma \ref{LE29}, for all $T'>0$ and $p>2$ thee exists some constant $M=M(T',p)>0$ such that for any $\lambda\geq 1$ one has
$$
\left[p^\lambda\right]_{B^{\frac{1}{2},p'}_\infty(0,T';H^{-\sigma}(\R^N))}\leq M.
$$
Using this bound and similar computations as the ones given in the above section, for each $p>2$ there exists some constant $M>0$ such that for all $\lambda>1/\varepsilon$ and any $\psi\in \mathcal D(\varepsilon,T;H^\sigma(\R^N))$ one has
$$
\left|\mathcal T_1^\lambda[\phi_\lambda]\right|\leq M\|\psi\|_{L^p(\varepsilon,T;H^\sigma(\R^N))}.
$$
Finally, coupling all the above inequalities together with H\"older inequality completes the proof of the lemma. 

\end{proof}

Using the two above lemmas, namely Lemma \ref{LE-sob} and Lemma \ref{LE-reg} we are in
position to conclude the proof of Theorem \ref{THEO-compact}.\\
\begin{proof}[Proof of Theorem \ref{THEO-compact}]
Note that for any given $R > 0$ and $\sigma > \frac{N}{2} + 2$
then embedding $W_0^{\sigma+1,2}(B R ) \hookrightarrow H^\sigma (R^N )$ is compact. Hence the dual continuous embedding $H^{-\sigma}(R^N ) \hookrightarrow H^{-\sigma-1} (B R )$ is also compact. In addition
we deduce from the above lemmas that for each $0 < \varepsilon < T$ and each $p>2$, there exists $\tilde \lambda>0$ large enough such that the family $\{p^\lambda\}_{\lambda>\tilde\lambda}$ is bounded
 in $L^\infty (\varepsilon, T ; H^{-\sigma}(R^N ))$ while the family ${\partial_t p^\lambda }_{\lambda>\tilde\lambda}$ is bounded in
$L^p (\varepsilon, T ; H^{-\sigma-1}(B_R ))$. Thus Aubin-Lions-Simon lemma ( see \cite{2, 16, 18}) applies
and ensures that Theorem \ref{THEO-compact} holds true.
\end{proof}

\appendix

\section{Various notions of topology on the space of measures}

In this appendix we recall different notions of topology that have been used in this work. We refer for instance to the textbook of Billingsley \cite{3} for more details
and results on this topic.

Here we denote by $\mathcal M$ the set of bounded Borel -- signed -- measures on $\R^N$. We also denote by $\mathcal M^+$ and $\mathcal M^-$ the set of positive and negative Borel measures respectively.

\noindent{\bf Total variation norm on $\mathcal M$:}

The space $\mathcal M$ can be firstly endowed with
the usual total variation norm, denoted by $\|.\|_{TV}$ and defined trough the
Hahn-Jordan decomposition of a signed measure $\mu\in\mathcal M$ as
\begin{equation*}
\mu=\mu^+-\mu^-\text{ with }\mu^\pm\in\mathcal M^+,
\end{equation*}
and $\|\mu\|_{TV}=\mu^+(\R^N)+\mu^-(\R^N)$. Hence $\mathcal M$ endowed with the total variation
norm becomes a Banach space.\\
This strong topology has not been used in this work because translation are usually not continuous for this norm topology.
Indeed if ${\bf q}\in \R^N\setminus\{0\}$ is given,  the map $\theta\in [-1, 0]\mapsto \delta_{{\bf q}\theta}\in \mathcal P$ is not continuous with respect to this norm topology.

\noindent{\bf Narrow topology on $\mathcal M$:}

The space $\M$ can be endowed with the so-called narrow topology which is
defined as the weakest topology on $\M$ such that for each test function $f\in
\mathcal C_b (\R^N)$ the functional
\begin{equation*}
\mu\in \M\mapsto\int_{\R^N} f{\rm d}\mu\in \R, 
\end{equation*}
is continuous. The topological space $\M$ endowed with the narrow topology will
be denoted by $\M_{\rm narr}$.\\
Note that for this topology the map $\theta\mapsto \delta_{{\bf q}\theta}$ is continuous.

Let us also recall the so-called Portemanteau theorem:
\begin{theorem}[Portemanteau Theorem]\label{THEO4} Let $\{\mu_n\}_{n\geq 0}\subset \M$ be a given sequence and
$\mu\in \M$ be given. Then the following properties are equivalent:
\begin{itemize}
\item[(i)] $\mu_n\to \mu$ in $\M_{\rm narr}$;
\item[(ii)] $\int_{\R^N}f{\rm d}\mu_n\to \int_{\R^N}f{\rm d}\mu$ for all Lipschitz continuous function on $\R^N$;
\item[(iii)] $\displaystyle \limsup_{n\to\infty}\int_{\R^N}f{\rm d}\mu_n\leq \int_{\R^N}f{\rm d}\mu$ for every upper semi-continuous function $f$ on $\R^N$ bounded from above;
\item[(iv)] $\displaystyle \liminf_{n\to\infty}\int_{\R^N}f{\rm d}\mu_n\geq \int_{\R^N}f{\rm d}\mu$ for every lower semi-continuous function $f$ on $\R^N$ bounded from below;
\item[(v)] $\displaystyle \lim_{n\to\infty}\mu_n(A)=\mu(A)$ for each $A\in\mathcal B(\R^N)$, the Borel sets of $\R^N$ such that $\mu(\partial A)=0$.
\end{itemize}
\end{theorem}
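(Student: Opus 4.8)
The plan is to prove the five conditions equivalent by establishing a single cycle of implications, using the two features that make the narrow topology manageable: semicontinuous functions are monotone limits of Lipschitz functions, and a continuity set can be sandwiched between its interior and closure. Since condition (i) tests against all of $\mathcal C_b(\R^N)$ whereas (ii) only restricts the test class to (bounded) Lipschitz functions, the implication (i)$\Rightarrow$(ii) is immediate; the real work is to close the loop back to (i).

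The engine is (ii)$\Rightarrow$(iv). Given $f$ lower semicontinuous and bounded below, I use the inf-convolution (Moreau--Yosida) regularisation $f_k(x)=\inf_{y\in\R^N}\{f(y)+k|x-y|\}$, which yields an increasing sequence of $k$-Lipschitz functions with $f_k\uparrow f$ pointwise and $f_k\le f$. For each fixed $k$, (ii) gives $\int f_k\,\d\mu_n\to\int f_k\,\d\mu$, so that $\liminf_n\int f\,\d\mu_n\ge\lim_n\int f_k\,\d\mu_n=\int f_k\,\d\mu$, and letting $k\to\infty$ with monotone convergence produces (iv). Replacing $f$ by $-f$ (usc bounded above $\leftrightarrow$ lsc bounded below) gives (iii)$\Leftrightarrow$(iv). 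For (iv)$\Rightarrow$(v), I take a Borel set $A$ with $\mu(\partial A)=0$, note that $\mathbf 1_{\overline A}$ is usc and $\mathbf 1_{A^\circ}$ is lsc, and apply (iii) to $\overline A$ and (iv) to $A^\circ$ to obtain $\mu(A^\circ)\le\liminf_n\mu_n(A)\le\limsup_n\mu_n(A)\le\mu(\overline A)$; since $\mu(\partial A)=0$ the two outer terms equal $\mu(A)$, whence $\mu_n(A)\to\mu(A)$. Finally, to close with (v)$\Rightarrow$(i), I fix $f\in\mathcal C_b(\R^N)$ and approximate it uniformly by simple functions $\sum_j c_j\mathbf 1_{A_j}$ with level sets $A_j=\{a_{j-1}\le f<a_j\}$, choosing the thresholds $a_j$ outside the at most countable set of values on which a level set carries mass, so that each $A_j$ is a continuity set, and then pass to the limit using (v) termwise.

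The main obstacle is that every monotonicity step above --- "$f_k\le f\Rightarrow\int f_k\,\d\mu_n\le\int f\,\d\mu_n$", and likewise $\mu_n(A^\circ)\le\mu_n(A)\le\mu_n(\overline A)$ --- relies on the $\mu_n$ being nonnegative, which fails for general signed measures of $\mathcal M$. The honest resolution is to reduce to the positive case: all uses of the statement in this work concern probability measures, so I would run the cycle on $\mathcal M^+$ (equivalently $\mathcal P$), where the argument goes through verbatim. In the genuinely signed setting the equivalences require additional control, since narrow convergence of $\mu_n$ does not propagate to the Hahn--Jordan parts $\mu_n^\pm$, so one must assume a uniform total-variation bound together with the convergence of $\mu_n^\pm$ and then apply the positive-measure result to each part. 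Modulo this positivity reduction, the cycle (i)$\Rightarrow$(ii)$\Rightarrow$(iv)$\Rightarrow$(v)$\Rightarrow$(i), with (iii)$\Leftrightarrow$(iv) recorded along the way, completes the proof; the whole argument is classical and may be found in Billingsley \cite{3}.
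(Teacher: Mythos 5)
Your proof is correct in its essentials, but there is nothing in the paper to compare it against: Theorem \ref{THEO4} is stated in Appendix A as a recalled classical fact, with no proof given and only a pointer to Billingsley \cite{3}. Your cycle (i)$\Rightarrow$(ii)$\Rightarrow$(iv)$\Leftrightarrow$(iii), then (iv)$\Rightarrow$(v)$\Rightarrow$(i), is the standard textbook argument, and your treatment of each step (Moreau--Yosida regularisation, the interior/closure sandwich, level sets with thresholds chosen outside the at most countable set of values $a$ with $\mu(\{f=a\})>0$) is the classical one. Two remarks. First, your caveat about signed measures is well taken and is in fact a genuine defect of the statement as printed: for $\mu_n=\delta_{-1/n}-\delta_{1/n}\in\M$ one has $\mu_n\to 0$ narrowly, yet for the upper semi-continuous function $f=\mathbf 1_{(-\infty,0]}$ (and the continuity set $A=(-\infty,0]$) both (iii) and (v) fail; so the equivalences really do require $\mu_n,\mu\in\M^+$, or $\P$, which is the only setting in which the paper ever invokes the theorem (e.g.\ in the proof of Lemma \ref{LE17}). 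Second, a small technical point in your (ii)$\Rightarrow$(iv): the inf-convolutions $f_k$ of a lower semi-continuous function bounded below are $k$-Lipschitz and bounded below, but need not be bounded above, so (ii), read as a statement about bounded Lipschitz test functions, does not apply to them directly. The standard repair is to first replace $f$ by the truncation $f\wedge m$, which is lower semi-continuous and bounded, so that its regularisations $(f\wedge m)_k$ are bounded Lipschitz; one then obtains the inequality for $f\wedge m$ and lets $m\to\infty$ by monotone convergence. Neither point affects the architecture of your proof, which, after restriction to $\M^+$, is complete and agrees with the reference the paper cites.
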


We can now focus on the subset $\P \subset \M$ of probability measures endowed
with the induced narrow topology.
The topological space $\P_{\rm narr}$ is metrizable with the distance associated to the dual norm of $W^{1,\infty}(\R^N)$. Such a distance is called bounded Lipschitz distance,
denoted by $d_{BL}$ and it reads for each $\mu,\nu\in\P$ as
\begin{equation*}
d_{BL}\left(\mu,\nu\right)=\|\mu-\nu\|_{W^{-1,\infty}(\R^N)}=\sup_{\|f\|_{W^{1,\infty}(\R^N)}\leq 1}\int_{\R^N}f{\rm d}\left(\mu-\nu\right).
\end{equation*}

\noindent{\bf Vague topology or Weak$*$ topology of $\M$:}

The set $\M$ can also be endowed with weaker topology. Denote by $\mathcal C_0(\R^N)$ the
space of continuous functions on $\R^N$ tending to $0$ at infinity. Endowed with
the usual sup norm, it becomes a separable Banach space. Then the Riesz
representation theorem ensures the following dual representation
\begin{equation*}
\M=\left(\mathcal C_0(\R^N)\right)'.
\end{equation*}
Hence $\M$ can be endowed with the weak$∗$ topology, that is also called vague
topology. Note that the close balls are compact due to Banach-Alaoglu theorem.
It is also important to notice that since $\C_0(\R^N)$ is separable, the closed ball are
metrizable and thus sequentially compact. In the sequel the topological space
$\M$ endowed with the vague convergence will be denoted by $\M_{\rm vague}$.

Let us recall the following connexion between narrow and weak$∗$ topology:
\begin{lemma}\label{LE5}
Let $\{\mu_n\}_{n\geq 0}\subset\M^+$ be a given sequence and $\mu\in\M^+$ be given.
Assume that
\begin{equation*}
\lim_{n\to\infty}\mu_n=\mu\text{ in }\M_{\rm vague}\text{ and }\lim_{n\to\infty}\int_{\R^N}{\rm d}\mu_n=\int_{\R^N}{\rm d}\mu,
\end{equation*}
then $\mu_n\to\mu$ narrowly, namely in $\M_{\rm narr}$.
\end{lemma}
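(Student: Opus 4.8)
The plan is to upgrade the vague convergence, which only tests against functions in $\C_0(\R^N)$, to narrow convergence, which requires testing against all bounded continuous functions $f\in\C_b(\R^N)$. The gap between the two topologies is precisely the possible escape of mass to infinity, and the role of the additional hypothesis $\int_{\R^N}\d\mu_n\to\int_{\R^N}\d\mu$ is exactly to rule this out. Accordingly, the proof reduces to a tail-control argument built from a family of compactly supported cut-off functions.

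First I would fix $f\in\C_b(\R^N)$ and, without loss of generality, assume $\|f\|_\infty\leq 1$. For each $R>0$ I would choose a cut-off $\chi_R\in\C_c(\R^N)\subset\C_0(\R^N)$ with $0\leq\chi_R\leq 1$, $\chi_R\equiv 1$ on the ball $B_R$ and $\mathrm{supp}\,\chi_R\subset B_{R+1}$. The elementary decomposition
\begin{equation*}
\int_{\R^N}f\,\d\mu_n-\int_{\R^N}f\,\d\mu=\int_{\R^N}f\chi_R\,\d(\mu_n-\mu)+\int_{\R^N}f(1-\chi_R)\,\d\mu_n-\int_{\R^N}f(1-\chi_R)\,\d\mu
\end{equation*}
splits the difference into a ``core'' term and two ``tail'' terms. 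Since $f\chi_R\in\C_0(\R^N)$, the core term tends to $0$ as $n\to\infty$ for each fixed $R$, directly by the assumed vague convergence.

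The heart of the matter is the tail terms. For the last term one simply has $\left|\int f(1-\chi_R)\,\d\mu\right|\leq\int(1-\chi_R)\,\d\mu$, which tends to $0$ as $R\to\infty$ by dominated convergence, $\mu$ being a finite measure. For the middle term I would estimate $\left|\int f(1-\chi_R)\,\d\mu_n\right|\leq\int(1-\chi_R)\,\d\mu_n=\mu_n(\R^N)-\int\chi_R\,\d\mu_n$; here the mass hypothesis gives $\mu_n(\R^N)\to\mu(\R^N)$ while vague convergence applied to $\chi_R\in\C_0(\R^N)$ gives $\int\chi_R\,\d\mu_n\to\int\chi_R\,\d\mu$, so that
\begin{equation*}
\limsup_{n\to\infty}\int_{\R^N}(1-\chi_R)\,\d\mu_n=\mu(\R^N)-\int_{\R^N}\chi_R\,\d\mu=\int_{\R^N}(1-\chi_R)\,\d\mu.
\end{equation*}
Combining the three pieces yields $\limsup_{n\to\infty}\left|\int f\,\d\mu_n-\int f\,\d\mu\right|\leq 2\int_{\R^N}(1-\chi_R)\,\d\mu$ for every $R>0$, and letting $R\to\infty$ makes the right-hand side vanish. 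Thus $\int f\,\d\mu_n\to\int f\,\d\mu$ for every $f\in\C_b(\R^N)$, which is the definition of convergence in $\M_{\rm narr}$. The only genuinely delicate point is the uniform (in $n$) control of the escaping mass, which is not available from vague convergence alone — the standard counterexample $\mu_n=\delta_n$ converges vaguely to $0$ yet has mass $1$, showing the mass hypothesis is indispensable — but once the identity for $\limsup_{n\to\infty}\int(1-\chi_R)\,\d\mu_n$ is in hand, the argument closes immediately.
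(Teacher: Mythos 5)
Your proof is correct. The paper itself states this lemma without proof — it is recalled as a classical fact with a reference to Billingsley's textbook — so there is no in-paper argument to compare against; your cut-off/tail-control argument is exactly the standard proof of this result. You use both hypotheses precisely where they are needed, and the positivity of the measures (essential for the bound $\left|\int_{\R^N} f(1-\chi_R)\,\d\mu_n\right|\leq\int_{\R^N}(1-\chi_R)\,\d\mu_n$, and without which the lemma is false) enters your estimates correctly.
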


\noindent{\bf Young measures:}
The notion of Young measure has also been used in this work. Here we recall some basic facts and properties and we refer the reader to \cite{Castaing, Valadier} and the references cited therein for more details on Young measures theory.

First we say that a function $m \equiv m_t : \R^+ \to \M$ is weakly$∗$ measurable if for each
test function $\varphi\in \C_0 (\R^N )$ the map $t \mapsto\int_{\R^N} \varphi{\rm d}m_t$ is measurable from $\R^+$ to
$\R$. This allows to defined the vector space $L^\infty_{\omega*}(\R^+ ; \M)$ as the set of weakly$∗$
measurable map from $\R^+$ into $\M$ and that is essentially bounded. The elements
of $L^\infty_{\omega*}(\R + ; \P)$ are called Young measures.\\
Now let us recall (see for instance \cite{9}) the following duality representation
\begin{equation*}
L^\infty_{\omega*}(\R^+ ; \M)=\left(L^1(\R^+;\C_0(\R^N))\right)'.
\end{equation*}
Recall also that the Banach space $L^1(\R^+;\C_0(\R^N))$ is separable. Therefore
Banach-Alaoglu theorem implies that bounded set in $L^\infty_{\omega*}(\R^+ ; \M)$ are relatively
(sequentially) compact with respect to the weak$∗$ topology. This leads to the
following fundamental compactness lemma for Young measures.
\begin{lemma}\label{LE6}
Let $\{m^k\}_{k\geq 0}$ be a sequence of Young measures, that is elements of $L_{\omega*}^\infty(\R^+;\P)$. Then there exists a subsequence $\{k_n\}_{n\geq 0}$ and a map of measures $m^\infty\equiv m^\infty_t\in L_{\omega*}^\infty(\R^+;\M^+)$ such that
\begin{equation*}
\int_{\R^N} {\rm d}m_t^\infty\leq 1\;a.e.\;t\in\R^+,
\end{equation*}
and such that for all $f\in L^1\left(\R^+;\C_0(\R^N)\right)$ one has
\begin{equation}\label{12}
\lim_{n\to\infty}\int_{\R^+\times\R^N}f(t,x)m_t^{k_n}\left({\rm d}x\right)dt=\int_{\R^+\times\R^N}f(t,x)m_t^{\infty}\left({\rm d}x\right)dt.
\end{equation}
\end{lemma}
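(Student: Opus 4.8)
The plan is to read the subsequence straight off the Banach--Alaoglu theorem in the dual pairing recalled just above the statement, and then to verify the two structural properties of the limit (positivity and the mass bound) by testing against carefully chosen product functions. First I would record the uniform bound: since each $m^k$ takes values in $\P$, for a.e. $t$ the measure $m_t^k$ is a probability measure, hence positive with total mass one, so $\|m_t^k\|_{TV}=1$. Consequently $\{m^k\}_{k\geq 0}$ is bounded, with norm at most one, in $L^\infty_{\omega*}(\R^+;\M)=\left(L^1(\R^+;\C_0(\R^N))\right)'$. Because $L^1(\R^+;\C_0(\R^N))$ is separable, Banach--Alaoglu yields a subsequence $\{k_n\}$ and an element $m^\infty\in L^\infty_{\omega*}(\R^+;\M)$ with $m^{k_n}\to m^\infty$ weakly$*$. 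Unwinding the duality pairing, this weak$*$ convergence is precisely \eqref{12} for every $f\in L^1(\R^+;\C_0(\R^N))$, so the convergence assertion is immediate; it remains only to check that $m^\infty_t\in\M^+$ and $\int_{\R^N}{\rm d}m_t^\infty\leq 1$ for a.e. $t$.

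For positivity I would test against products $f(t,x)=g(t)\varphi(x)$ with $g\in L^1(\R^+)$, $g\geq 0$ and $\varphi\in\C_0(\R^N)$, $\varphi\geq 0$. Each integral $\int g(t)\int_{\R^N}\varphi\,{\rm d}m_t^{k_n}\,{\rm d}t$ is nonnegative, so passing to the limit gives $\int g(t)\int_{\R^N}\varphi\,{\rm d}m_t^\infty\,{\rm d}t\geq 0$. Since this holds for every nonnegative $g\in L^1(\R^+)$, the map $t\mapsto\int_{\R^N}\varphi\,{\rm d}m_t^\infty$ is nonnegative a.e. To handle all $\varphi$ at once, I would fix a countable family $\{\varphi_i\}$ dense (for the sup norm) in the nonnegative cone of $\C_0(\R^N)$; discarding a countable union of null sets, for $t$ outside a single null set one has $\int\varphi_i\,{\rm d}m_t^\infty\geq 0$ for all $i$, and by density $m_t^\infty\in\M^+$ for a.e. $t$, which upgrades the target space to $\M^+$.

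For the mass bound the only genuine subtlety is that the constant function $\mathbf 1$ does not belong to $\C_0(\R^N)$, so it cannot be used as a test function directly. Instead I would pick an increasing sequence $\varphi_j\in\C_0(\R^N)$ with $0\leq\varphi_j\leq 1$ and $\varphi_j\uparrow 1$ pointwise. For fixed $j$ and nonnegative $g\in L^1(\R^+)$, using $\int\varphi_j\,{\rm d}m_t^{k_n}\leq\int{\rm d}m_t^{k_n}=1$ and passing to the limit gives $\int g(t)\int\varphi_j\,{\rm d}m_t^\infty\,{\rm d}t\leq\int g(t)\,{\rm d}t$, whence $\int\varphi_j\,{\rm d}m_t^\infty\leq 1$ for a.e. $t$. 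Taking the union over $j$ of the exceptional null sets and then letting $j\to\infty$, monotone convergence yields $\int_{\R^N}{\rm d}m_t^\infty\leq 1$ a.e.

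The argument is not hard analytically; its delicate points are bookkeeping ones. The first is coordinating the $t$-a.e.\ exceptional sets across the uncountably many admissible test functions, which is resolved by the separability of $\C_0(\R^N)$ and a density argument. The second, and more conceptual, point is that one can only probe $m_t^\infty$ through functions vanishing at infinity, so approximating the inadmissible constant $\mathbf 1$ from below forces the conclusion to be the inequality $\int{\rm d}m_t^\infty\leq 1$ rather than mass conservation; this reflects the possibility that mass escapes to spatial infinity in the limit, and is exactly why $m^\infty$ is only asserted to be $\M^+$-valued with mass at most one.
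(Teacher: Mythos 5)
Your proposal is correct and follows precisely the route the paper indicates: the paper states Lemma \ref{LE6} without proof, deriving it from the duality $L^\infty_{\omega*}(\R^+;\M)=\left(L^1(\R^+;\C_0(\R^N))\right)'$, the separability of $L^1(\R^+;\C_0(\R^N))$, and the Banach--Alaoglu theorem, which is exactly your compactness step. Your additional verifications --- positivity of $m_t^\infty$ via a countable dense family in the nonnegative cone of $\C_0(\R^N)$, and the mass bound $\int_{\R^N}{\rm d}m_t^\infty\leq 1$ via an increasing sequence $\varphi_j\uparrow 1$ with monotone convergence --- are sound and correctly fill in the details the paper delegates to its references on Young measure theory.
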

Similarly to Lemma \ref{LE5}, one has the following result
\begin{lemma}[Tightness Lemma]\label{LE7}
With the same notations as in Lemma \ref{LE6}, if
the limit measure satisfies the tightness condition
\begin{equation*}
\int_{\R^N} {\rm d}m_t^\infty= 1\;a.e.\;t\in\R^+,
\end{equation*}
then the above convergence, namely \eqref{12} holds for the so-called narrow topology
of Young measures, that reads as for all test function $f\in L^1\left(\R^+;\C_b(\R^N)\right)$
one has
\begin{equation*}
\lim_{n\to\infty}\int_{\R^+\times\R^N}f(t,x)m_t^{k_n}\left({\rm d}x\right)dt=\int_{\R^+\times\R^N}f(t,x)m_t^{\infty}\left({\rm d}x\right)dt.
\end{equation*}
\end{lemma}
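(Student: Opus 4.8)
The plan is to derive Lemma~\ref{LE7} as the time-integrated, Young-measure analogue of the classical statement recorded in Lemma~\ref{LE5}: the weak$*$ (vague) convergence \eqref{12} against $\C_0$ test functions, combined with the conservation of total mass encoded in the tightness condition $\int_{\R^N}\d m_t^\infty=1$, should upgrade to narrow convergence against the larger class of $\C_b$ test functions. The only obstruction in passing from $\C_0$ to $\C_b$ is a possible loss of mass at spatial infinity, and the tightness hypothesis is precisely what forbids it. I would implement this through a spatial cut-off argument.

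First I would fix a continuous cut-off family $\chi_R:\R^N\to[0,1]$ with $\chi_R\equiv 1$ on $B_R$ and ${\rm supp}\,\chi_R\subset B_{2R}$, so that $\chi_R\uparrow 1$ pointwise as $R\to\infty$. Given $f\in L^1(\R^+;\C_b(\R^N))$, I split $f=f\chi_R+f(1-\chi_R)$. Since $\chi_R$ is compactly supported, the product $f\chi_R$ lies in $L^1(\R^+;\C_0(\R^N))$, so that for each fixed $R$ the convergence \eqref{12} of Lemma~\ref{LE6} applies verbatim:
$$\lim_{n\to\infty}\int_{\R^+\times\R^N} f(t,x)\chi_R(x)\,m_t^{k_n}(\d x)\,\d t=\int_{\R^+\times\R^N} f(t,x)\chi_R(x)\,m_t^{\infty}(\d x)\,\d t.$$
It then remains to control the two remainder terms carrying the factor $f(1-\chi_R)$, uniformly as $R\to\infty$.

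For the limit measure, tightness makes each $m_t^\infty$ a probability measure for a.e. $t$, so $\int_{\R^N}(1-\chi_R)\,m_t^\infty(\d x)\downarrow 0$ as $R\to\infty$ for a.e. $t$; dominated convergence with the $L^1(\R^+)$-majorant $\psi(t):=\|f(t,\cdot)\|_{\C_b}$ then yields $\int_{\R^+\times\R^N} f(1-\chi_R)\,m_t^\infty(\d x)\,\d t\to 0$. The crux, which I expect to be the main obstacle, is the corresponding bound for the prelimit measures $m^{k_n}$ uniformly in $n$. Rather than attempt to prove uniform tightness of $\{m^{k_n}\}$ directly, I would transfer the tail estimate onto the limit: using that each $m_t^{k_n}$ is a probability measure,
$$\left|\int_{\R^+\times\R^N} f(t,x)(1-\chi_R(x))\,m_t^{k_n}(\d x)\,\d t\right|\leq \int_{\R^+}\psi(t)\left(1-\int_{\R^N}\chi_R(x)\,m_t^{k_n}(\d x)\right)\d t,$$
and then applying \eqref{12} to the $\C_0$ test function $(t,x)\mapsto\psi(t)\chi_R(x)$ shows that this right-hand side converges, as $n\to\infty$, to $\int_{\R^+}\psi(t)\left(1-\int_{\R^N}\chi_R(x)\,m_t^{\infty}(\d x)\right)\d t$, which tends to $0$ as $R\to\infty$ by the dominated-convergence argument above. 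Thus the escaping tail mass of the $m^{k_n}$ is, in the limit, pinned to the tail mass of $m^\infty$, which tightness forces to vanish.

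Finally I would assemble the three pieces by an $\varepsilon/3$ argument. Taking $\limsup_{n\to\infty}$ in the triangle inequality, the $f\chi_R$ contribution vanishes for every fixed $R$, while the two remainder contributions are dominated by quantities tending to $0$ as $R\to\infty$; since $R$ is arbitrary, one concludes $\limsup_{n\to\infty}\left|\int_{\R^+\times\R^N} f\,m_t^{k_n}(\d x)\,\d t-\int_{\R^+\times\R^N} f\,m_t^{\infty}(\d x)\,\d t\right|=0$, which is exactly the asserted narrow convergence of the Young measures.
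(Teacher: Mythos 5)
Your proof is correct. One remark on context first: the paper itself does not prove Lemma~\ref{LE7} --- it is stated in Appendix~A as recalled background from the Young-measure literature (the works of Castaing--Raynaud de Fitte--Valadier cited there), so there is no in-paper argument to compare against; your proposal supplies a self-contained proof of what the paper takes as known. The argument you give is the standard one and all steps hold up under scrutiny. The decomposition $f=f\chi_R+f(1-\chi_R)$ is legitimate because multiplication by the compactly supported $\chi_R$ is a bounded linear map from $\C_b(\R^N)$ into $\C_0(\R^N)$, so $f\chi_R$ is indeed strongly measurable with values in $\C_0$ and lies in $L^1\left(\R^+;\C_0(\R^N)\right)$, making \eqref{12} applicable; likewise $\psi(t)\chi_R(x)$ with $\psi(t)=\|f(t,\cdot)\|_{\C_b}$ is an admissible test function. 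The crux of your proof --- rewriting the prelimit tail mass as $1-\int_{\R^N}\chi_R\,m_t^{k_n}(\d x)$, which uses that the $m^{k_n}$ are genuinely probability-valued Young measures, and then letting the vague convergence \eqref{12} pin this quantity to $1-\int_{\R^N}\chi_R\,m_t^{\infty}(\d x)$, which the tightness hypothesis sends to zero --- is exactly the mechanism that makes the passage from $\C_0$ to $\C_b$ test functions work; it is the time-integrated analogue of Lemma~\ref{LE5}, and it correctly avoids the circular route of trying to establish uniform tightness of the sequence $\{m^{k_n}\}$ a priori. The final $\varepsilon/3$ assembly, with the $\limsup$ in $n$ taken at fixed $R$ before sending $R\to\infty$, is in the right order and closes the argument.
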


\section{A linear delay differential equation}

In this appendix we come back to the linear delay differential \eqref{DDeq} and we will prove that 
$$
\alpha(t,\theta)\to a_\infty(\theta)\text{ as }t\to\infty,
$$
uniformly for $\theta\in [-1,0]$ and with an exponential rate, that re-writes as $\frac{y(t+\theta)}{y(t)}\to e^{\gamma \theta}$ as $t\to\infty$ uniformly with respect to $\theta\in [-1,0]$ and exponentially fast.

To that aim, we consider the equation
\begin{equation}\label{DDE}
\begin{split}
&y'(t)=\int_{[-1,0]}y(t+\theta)K(\d\theta),\;t>0,\\
&y(\theta)=y^0(\theta),\;\forall \theta\in [-1,0].
\end{split}
\end{equation}
wherein $K\in \M^+([-1,0])$ denotes a bounded positive Borel measure on $[-1,0]$ with $K\left([-1,0]\right)>0$ and $y^0\in \C\left([-1,0];\R^+\right)$ with $y^0(0)>0$ so that $y(t)>0$ for all $t>1$.\\
Next to investigate the large time behaviour of the above equation let us introduce the
history function $u \equiv u(t, \theta)$ defined by
\begin{equation*}
u(t,\theta)=y(t+\theta),\;t\geq 0,\;\theta\in [-1,0].
\end{equation*}
This function formally satisfies the following Cauchy problem:
\begin{equation}\label{17}
\begin{cases}
\partial_t u-\partial_\theta u=0,\;t>0,\;\theta\in [-1,0],\\
\partial_\theta u(t,0)=\int_{[-1,0]} u(t,\theta)K({\d}\theta),\\
u(0,.)=y^0\in \C\left([-1,0]\right).
\end{cases}
\end{equation}
Such a linear functional differential problem has been extensively studied in the
literature. We refer for instance to \cite{17, 21}, the monographs \cite{11, 14} and the
references cited therein.\\
Here to be more precise we consider the Banach space $Y=\R\times \C\left([-1,0]\right)$ and $Y_0=\{0\}\times \C\left([-1,0]\right)$ as well as the linear operator $A:D(A)\subset Y\to Y$ defined by
\begin{equation*}
D(A)=\{0\}\times \C^1([-1,0]),\;\;A\begin{pmatrix} 0\\\varphi\end{pmatrix}=\begin{pmatrix} -\varphi'(0)+\int_{[-1,0]} \varphi(\theta)K({\rm d}\theta)\\ \varphi'\end{pmatrix}.
\end{equation*}
Then setting $U(t)=\begin{pmatrix}
0\\ u(t,.)\end{pmatrix}$, Problem \eqref{17} re-writes as the following abstract Cauchy problem
\begin{equation*}
\frac{dU(t)}{dt}=AU(t),\;t>0\text{ and }U(0)=\begin{pmatrix}
0\\ y^0\end{pmatrix}\in Y_0=\overline{D(A)}.
\end{equation*}
Following \cite{17} (see also the references therein) this problem generates a strongly
continuous linear semigroup $\{T(t)\}_{t\geq 0}$ on $Y_0$ with infinitesimal generator $A_0$, the part of $A$ in $Y_0$, defined as
\begin{equation*}
D(A_0)=\{x\in D(A):\;Ax\in Y_0\}\text{ and }A_0x=Ax\;\forall x\in D(A_0).
\end{equation*}
In addition, the essential growth rate $\omega_{0,ess}(A_0)$ satisfies $\omega_{0,ess}(A_0)=-\infty$ so that, due to usual results for spectral theory (see for instance the monograph \cite{22}), the spectrum of $A_0$ only consists in point spectrum and one has
\begin{equation*}
\sigma\left(A_0\right)=\{z\in \mathbb C:\;\Delta(z)=0\},
\end{equation*}
wherein the function $\Delta:\mathbb C\to \mathbb C$ is defined by
\begin{equation}\label{18}
\Delta(z)=z-\int_{[-1,0]} e^{\theta z}K({\rm d}\theta).
\end{equation}
In addition, the growth rate $\omega_0 (A_0)$ of the linear semigroup $\{T(t)\}_{t\geq 0}$ is obtained by
\begin{equation*}
\omega_0 (A_0)=\max\{\Re z:\;z\in \sigma(A_0)\}.
\end{equation*}
Let us now consider the unique $\gamma > 0$ solution of the equation $\Delta(z) = 0$. Next the following lemma holds true:
\begin{lemma}\label{LE14}
Let $z\in\mathbb C$ be given such that $\Delta(z)=0$. Then the following properties hold true:
\begin{itemize}
\item[(i)] One has $\Re(z)\leq \gamma$ and $\Re(z)=\gamma$ $\Longrightarrow$ $z=\gamma$.
\item[(ii)] If $\Re(z)\geq 0$ then $|\Im(z)|\leq K\left([-1,0]\right)$.
\end{itemize}
\end{lemma}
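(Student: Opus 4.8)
The plan is to exploit the fact that $\Delta(z)=0$ is equivalent to the identity $z=\int_{[-1,0]}e^{\theta z}K(\d\theta)$, and to split this identity into real and imaginary parts. Writing $z=a+i\omega$ with $a=\Re(z)$ and $\omega=\Im(z)$, the equation $\Delta(z)=0$ from \eqref{18} is equivalent to the real system
\[
a=\int_{[-1,0]}e^{\theta a}\cos(\omega\theta)\,K(\d\theta),\qquad \omega=\int_{[-1,0]}e^{\theta a}\sin(\omega\theta)\,K(\d\theta).
\]
This is exactly the strategy already used in the proof of Lemma \ref{LE-carac}, and I expect both items to follow from elementary estimates on this system together with the monotonicity of $\Delta$ restricted to the real line.

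For item (i), first I would record that $\Delta$ is strictly increasing on $\R$: since $-\theta\geq 0$ on $[-1,0]$, one has $\Delta'(x)=1+\int_{[-1,0]}(-\theta)e^{\theta x}K(\d\theta)\geq 1$, so $\gamma$ is indeed the unique real root. Then, bounding $\cos(\omega\theta)\le 1$ in the first equation gives $a\le\int_{[-1,0]}e^{\theta a}K(\d\theta)$, that is $\Delta(a)\le 0=\Delta(\gamma)$, and monotonicity yields $a\le\gamma$. To handle the equality case cleanly, I would rewrite the first equation as
\[
-\Delta(a)=\int_{[-1,0]}e^{\theta a}\left[1-\cos(\omega\theta)\right]K(\d\theta)\ge 0,
\]
so that if $a=\gamma$ then $\Delta(a)=0$ and the nonnegative integrand must vanish $K$-almost everywhere; this forces $\cos(\omega\theta)=1$, hence also $\sin(\omega\theta)=0$, for $K$-a.e. $\theta$. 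Substituting into the second equation gives $\omega=0$, i.e.\ $z=\gamma$.

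For item (ii), I would simply estimate the imaginary-part equation. Assuming $a=\Re(z)\ge 0$, one has $\theta a\le 0$ and therefore $e^{\theta a}\le 1$ for every $\theta\in[-1,0]$; combined with $|\sin(\omega\theta)|\le 1$ this gives
\[
|\omega|\le\int_{[-1,0]}e^{\theta a}\,|\sin(\omega\theta)|\,K(\d\theta)\le K\left([-1,0]\right),
\]
which is precisely the claimed bound.

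I do not anticipate a genuine obstacle here; the only step requiring slight care is the equality case of (i), where one must invoke the strict positivity of the measure (through $K([-1,0])>0$ and $e^{\theta a}>0$) to pass from the vanishing of the integral to the pointwise identity $\cos(\omega\theta)=1$ $K$-almost everywhere. The remainder is a direct transcription of the real/imaginary decomposition already exploited for Lemma \ref{LE-carac}.
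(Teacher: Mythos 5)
Your proof is correct and follows exactly the route the paper intends: the paper's own proof of Lemma \ref{LE14} simply states that item (i) follows the same lines as Lemma \ref{LE-carac} (real/imaginary decomposition, monotonicity of $\Delta$ on $\R$, and the equality case via positivity of the integrand) and that item (ii) is straightforward, which is precisely what you carried out. Your filled-in details, including the use of $e^{\theta a}>0$ and $K([-1,0])>0$ to pass from the vanishing integral to $\cos(\omega\theta)=1$ $K$-a.e., are sound.
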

The proof of $(i)$ follows the same lines as the one of Lemma \ref{LE-carac} while the proof of $(ii)$ is straightforward.

As a direct corollary of the above lemma, one obtains that
\begin{equation*}
\omega_0(A_0)=\gamma,
\end{equation*}
and there exists $\varepsilon>0$ such that for all $z\in \mathbb C$:
\begin{equation}\label{19}
\Delta(z)=0\text{ and }\Re(z)\geq \gamma-\varepsilon\;\Longrightarrow\;z=\gamma.
\end{equation}
Moreover let us notice that
\begin{equation*}
\Delta'(\gamma)=1+\int_{[-1,0]} (-\theta)e^{\theta \gamma}K({\rm d}\theta)\in [1,\infty).
\end{equation*}
This means that the dominant eigenvalue $\gamma$ is simple.\\
And therefore, the usual spectral theory ensures that there
exists $\alpha_1 > 0$ and $\beta > 0$ such that
\begin{equation*}
y(t)=e^{\gamma t}\left(\alpha_1+O(e^{-\beta t}\right)\text{ as }t\to\infty.
\end{equation*}
Hence we obtain
$$
\frac{y(t+\theta)}{y(t)}\to e^{\gamma\theta}\text{ as }t\to \infty,
$$
uniformly with respect to $\theta\in [-1,0]$ and exponentially fast.
\end{document}